\newcommand\norm[2]{\left\Vert#1\right\Vert_{#2}}
\newcommand\dual[3]{\left\langle #1, #2\right\rangle_{#3}}
\newcommand\linop[2]{\mathbb L\left[#1,#2\right]}
\renewcommand\C{\mathbb{C}}
\newcommand\N{\mathbb{N}}
\newcommand\R{\mathbb{R}}
\newcommand{\opA}{\mathtt{A}}
\newcommand{\opB}{\mathtt{B}}
\newcommand{\opC}{\mathtt{C}}
\newcommand{\opD}{\mathtt{D}}
\newcommand{\opE}{\mathtt{E}}
\newcommand{\opF}{\mathtt{F}}
\newcommand{\opL}{\mathtt{L}}
\newcommand{\opS}{\mathtt{S}}
\newcommand{\cl}{\operatorname{cl}}
\newcommand{\bd}{\operatorname{bd}}
\newcommand{\supp}{\operatorname{supp}}
\newcommand{\weakly}{\rightharpoonup}
\renewcommand{\subseteq}{\subset}
\newtheorem{assumption}[theorem]{Assumption}
\providecommand{\algorithmname}{Algorithm}
\pgfplotsset{compat=newest}
\begin{document}

\title{Optimal control problems with control complementarity constraints}
\subtitle{existence results, optimality conditions, and a penalty method}
\author{%
    Christian Clason%
    \footnote{%
        Universität Duisburg-Essen,
        Faculty of Mathematics,
        45117 Essen,
        Germany,
        \email{christian.clason@uni-due.de},
        \url{https://udue.de/clason}%
    } \and
    Yu Deng%
    \footnote{%
        Technische Universität Bergakademie Freiberg,
        Faculty of Mathematics and Computer Science,
        09596 Freiberg,
        Germany,
        \email{yu.deng@math.tu-freiberg.de},
        \url{http://www.mathe.tu-freiberg.de/nmo/mitarbeiter/yu-deng}%
    }\and
    Patrick Mehlitz%
    \footnote{%
        Brandenburgische Technische Universität Cottbus-Senftenberg,
        Institute of Mathematics, Chair of Optimal Control,
        03046 Cottbus,
        Germany,
        \email{mehlitz@b-tu.de},
        \url{https://www.b-tu.de/fg-optimale-steuerung/team/dr-patrick-mehlitz}%
    }\and
    Uwe Prüfert%
    \footnote{%
        Technische Universität Bergakademie Freiberg,
        Faculty of Mathematics and Computer Science,
        09596 Freiberg,
        Germany,
        \email{uwe.pruefert@math.tu-freiberg.de},
        \url{http://www.mathe.tu-freiberg.de/nmo/mitarbeiter/uwe-pruefert}%
    }
}
\maketitle

\begin{abstract}
    A special class of optimal control problems 
    with complementarity constraints on the control functions is studied. 
    It is shown that such problems possess optimal solutions whenever the underlying control space is a first-order Sobolev space. 
    After deriving necessary optimality conditions of strong stationarity-type, 
    a penalty method based on the Fischer--Burmeister function is suggested and its theoretical properties are analyzed. 
    Finally, the numerical treatment of the problem is discussed and results of computational experiments are presented. 
%
%
%
%
\end{abstract}

\section{Introduction}

Complementarity conditions appear in many mathematical optimization problems arising from real-world applications, 
and this phenomenon is not restricted to the finite-dimensional setting, 
see \cite{LuoPangRalph1996,Ulbrich2011,Wachsmuth2015} and the references therein. 
A prominent example for a complementarity problem in function spaces is the optimal control of the obstacle problem, 
see \cite{HarderWachsmuth2018} for an overview of existing literature. 
Mathematical problems with complementarity constraints (MPCCs) suffer from an inherent lack of regularity, 
see \cite[Proposition~1.1]{YeZhuZhu1997} and \cite[Lemma~3.1]{MehlitzWachsmuth2016} 
for the finite- and infinite-dimensional situation, respectively, 
which is why the construction of suitable optimality conditions, constraint qualifications, and numerical methods is a challenging task. 
Using so-called \emph{NCP-functions}, complementarity constraints can be transformed 
into possibly nonsmooth equality constraints that can be handled by, e.g., Newton-type methods, 
see \cite{DeLucaFacchineiKanzow2000,FacchineiFischerKanzow1998,Ulbrich2011} and the references therein. 
A satisfying overview of NCP-functions can be found in \cite{SunQi1999}. 
One of the most popular NCP-functions is the so-called \emph{Fischer--Burmeister function} $\phi\colon\R^2\to\R$ given by
\begin{equation}\label{eq:FB_function}
    \forall a,b\in\R\colon\quad \phi(a,b):=\sqrt{a^2+b^2}-a-b,
\end{equation}
see \cite{Fischer1992}. Obviously, one has
\[
    \forall a,b\in\R\colon\quad\phi(a,b)=0\,\Longleftrightarrow\,a\geq 0\,\land\,b\geq 0\,\land ab=0,
\]
which (by definition) holds for all NCP-functions. 
Thus, NCP-functions allow the replacement of a complementarity condition by a single equality constraint. 
In \cite{Ulbrich2011}, it is shown that NCP-functions can be applied to solve complementarity problems 
in function space settings as well.

In this paper, an optimal control problem with complementarity constraints on the control functions is studied. 
Control complementarity constraints have been the subject of several recent papers including \cite{ClarkeDePinho2010,GuoYe2016,MehlitzWachsmuth2016b,PangStewart2008}. 
Classically, such constraints arise from reformulating a bilevel optimal control problem 
with lower level control constraints as a single-level problem using lower level first-order optimality conditions, 
see \cite[Section~5]{MehlitzWachsmuth2016}. 
On the other hand, control complementarity constraints are closely related to switching conditions on the control functions, 
see \cite{ClasonItoKunisch2016,ClasonRundKunischBarnard2016,ClasonRundKunisch2017} and the references therein. 
Here, it will be shown that such problems possess an optimal solution if the control space is taken as $H^1(\Omega)$. 
Recently, optimal control problems with control constraints in first-order Sobolev spaces were studied in \cite{DengMehlitzPruefert2018a,DengMehlitzPruefert2018b}.

It will also be demonstrated that the Fischer--Burmeister function can be used to design penalty methods 
that can be exploited to find minimizers of the corresponding optimal control problem. 
One major advantage of this procedure is that the resulting penalized problems are unconstrained. 
In contrast, simply penalizing the equilibrium condition and leaving the non-negativity conditions 
in the constraints would lead to the appearance of Lagrange multipliers from $H^1(\Omega)^\star$ 
in the necessary optimality conditions of the penalized problems, 
which would cause some theoretical and numerical difficulties due to the presumed high regularity 
of the control space, see \cite{DengMehlitzPruefert2018b}.

The paper is organized as follows: 
In the remainder of this section, the basic notation is introduced. 
Afterwards, the optimal control problem is formally stated and the existence of solutions is discussed in \cref{sec:existence}. 
Necessary optimality conditions of strong stationarity-type are derived in \cref{sec:optimality_conditions}. 
\Cref{sec:num_methods} is dedicated to the theoretical investigation of a penalization procedure. 
The practical implementation of the proposed numerical method 
and some corresponding examples are discussed in \cref{sec:numerics} and  \cref{sec:numerical_examples}, respectively.
A brief summary as well as some concluding remarks are presented in \cref{sec:conclusions}.

\paragraph{Basic notation}
For a real Banach space $\mathcal X$, $\norm{\cdot}{\mathcal X}$ denotes its norm. 
The expression $\mathcal X^\star$ is used to represent the topological dual space of $\mathcal X$. 
Let $\dual{\cdot}{\cdot}{\mathcal X}\colon\mathcal X^\star\times\mathcal X\to\R$
be the associated dual pairing.
For another Banach space $\mathcal Y$, $\linop{\mathcal X}{\mathcal Y}$ represents the 
Banach space of all bounded, linear operators which map from $\mathcal X$ to $\mathcal Y$. 
For $\opF\in\linop{\mathcal X}{\mathcal Y}$, $\opF^\star\in\linop{\mathcal Y^\star}{\mathcal X^\star}$ denotes its adjoint. 
If $\mathcal X\subset\mathcal Y$ holds true 
while the associated identity mapping from $\mathcal X$ into $\mathcal Y$ is continuous, 
then $\mathcal X$ is said to be
continuously embedded into $\mathcal Y$, denoted by $\mathcal X\hookrightarrow\mathcal Y$.

Recall that a set $A\subset\mathcal X$ is said to be weakly sequentially closed 
if all the limit points of weakly convergent sequences contained in $A$ belong to $A$ as well, 
and that any closed, convex set is weakly sequentially closed by Mazur's lemma. 
For any $A\subset\mathcal{X}$, define the polar cone
\begin{align*}
    A^\circ&:=\left\{x^\star\in\mathcal X^\star\,\middle|\,\forall x\in A\colon\,\dual{x^\star}{x}{\mathcal X}\leq 0\right\},
    \shortintertext{as well as the annihilator}
    A^\perp&:=\left\{x^\star\in\mathcal X^\star\,\middle|\,\forall x\in A\colon\,\dual{x^\star}{x}{\mathcal X}=0\right\}.
\end{align*}
By definition, $A^\perp=A^\circ\cap (-A)^\circ$ holds true.
It is well known that $A^\circ$ is a nonempty, closed, convex cone while $A^\perp$ is a closed subspace of $\mathcal X^\star$.
For an arbitrary vector $x\in\mathcal X$, set $x^\perp:=\{x\}^\perp$ for the sake of brevity.

Finally, if a function $F\colon \mathcal X\to\mathcal Y$ is Fr\'{e}chet differentiable at $\bar x\in\mathcal X$, 
then the bounded, linear operator $F'(\bar x)\in\linop{\mathcal X}{\mathcal Y}$ denotes its Fr\'{e}chet derivative at $\bar x$.

\paragraph{Function spaces}

For an arbitrary bounded domain $\Omega\subset\R^d$ and $p\in[1,\infty]$, 
$L^p(\Omega)$ denotes the usual Lebesgue space of (equivalence classes of) Lebesgue measurable functions 
mapping from $\Omega$ to $\R$, which is equipped with the usual norm. 
It is well known that for $p\in[1,\infty)$, the space $L^p(\Omega)^\star$ is isometric to $L^{p'}(\Omega)$ 
for $p'\in(1,\infty]$ such that $1/p+1/p'=1$. The associated dual pairing is given by
\[
    \forall u\in L^p(\Omega)\,\forall v\in L^{p'}(\Omega)\colon\quad \dual{v}{u}{L^p(\Omega)}:=\int_\Omega u(x)v(x)\mathrm d x.
\]
Recall that $L^2(\Omega)$ is a Hilbert space whose dual $L^2(\Omega)^\star$ will be identified with $L^2(\Omega)$ 
by means of Riesz' representation theorem. 
For an arbitrary function $u\in L^1(\Omega)$, $\supp u:=\{x\in\Omega\,|\,u(x)\neq 0\}$ denotes the support of $u$.
Supposing that $A\subset\Omega$ is a Lebesgue measurable set, 
$\chi_A\colon\Omega\to\R$ represents the characteristic function of $A$ which is $1$ for all $x\in A$ and $0$ else. 
Clearly, for a bounded domain $\Omega$ and $p\in[1,\infty)$, the relation $\norm{\chi_A}{L^p(\Omega)}=|A|^{1/p}$ 
is obtained where $|A|$ denotes the Lebesgue measure of $A$.

The Banach space of all weakly differentiable functions from $L^2(\Omega)$ 
whose weak derivatives belong to $L^2(\Omega)$ is denoted by $H^1(\Omega)$.
It is equipped with the usual norm
\[
    \forall y\in H^1(\Omega)\colon
    \quad 
    \norm{y}{H^1(\Omega)}:=\left(\norm{y}{L^2(\Omega)}^2
    +\mathsmaller\sum\nolimits_{i=1}^d\norm{\partial_{x_i}y}{L^2(\Omega)}^2\right)^{1/2}.
\]
Clearly, $H^1(\Omega)$ is a Hilbert space. 
However, its dual $H^1(\Omega)^\star$ will not be identified with $H^1(\Omega)$ so that $H^1(\Omega)$, $L^2(\Omega)$, 
and $H^1(\Omega)^\star$ form a so-called Gelfand triple, i.e., 
they satisfy the relations $H^1(\Omega)\hookrightarrow L^2(\Omega)\hookrightarrow H^1(\Omega)^\star$.
A detailed study of duality in Sobolev spaces can be found in \cite[Section~3]{AdamsFournier2003}.

Whenever $\Omega$ satisfies the so-called cone condition, see \cite[Section~4]{AdamsFournier2003}, 
then the embedding $H^1(\Omega)\hookrightarrow L^2(\Omega)$ is compact, see \cite[Theorem~6.3]{AdamsFournier2003}.
In this paper, $\opE\in\linop{H^1(\Omega)}{L^2(\Omega)}$ is used to denote the latter.

For later use, let $L^2_+(\Omega)\subset L^2(\Omega)$ and $H^1_+(\Omega)\subset H^1(\Omega)$ denote the nonempty, 
closed, and convex cones of almost everywhere nonnegative
functions in $L^2(\Omega)$ and $H^1(\Omega)$, respectively. 

\section{Problem setting and existence of optimal solutions}\label{sec:existence}

In this work, the model 
\textbf{o}ptimal \textbf{c}ontrol problem with \textbf{c}ontrol \textbf{c}omplementarity \textbf{c}onstraints
\begin{equation}\label{eq:OCCC}\tag{OC$^4$} 
    \left\{
        \begin{aligned}       
            \tfrac{1}{2}\norm{\opD[y]-y_\text d}{\mathcal D}^2+J(u,v)&\,\rightarrow\,\min_{y,u,v}\\
            \opA[y]-\opB[u]-\opC[v]&\,=\,0\\
            (u,v)&\,\in\,\C
        \end{aligned}
    \right.
\end{equation}
is studied, where for some $\alpha_1,\alpha_2\geq 0$ and $\varepsilon\geq 0$,
\[
    \forall u,v\in H^1(\Omega)
    \colon\quad 
    J(u,v):=\tfrac{\alpha_1}{2}\norm{u}{L^2(\Omega)}^2+\tfrac{\alpha_2}{2}\norm{v}{L^2(\Omega)}^2
    +\tfrac{\varepsilon}{2}\left(\norm{u}{H^1(\Omega)}^2+\norm{v}{H^1(\Omega)}^2\right),
\]
and $\C$ denotes the complementarity set
\[
    \C:=\left\{(w,z)\in  H^1(\Omega)^2\,\middle|\,0\leq w(x)\perp z(x)\geq 0\text{ a.e. on }\Omega\right\}.
\]
Observing that $\opA$ can represent a differential operator, one can interpret \eqref{eq:OCCC} as
an optimal control problem with complementarity constraints on the control functions 
that can be used to model switching requirements on the controls. 
In the context of ordinary differential equations, optimal control problems 
with mixed control-state complementarity constraints have been studied 
in \cite{ClarkeDePinho2010,GuoYe2016,PangStewart2008} recently.
In \cite{HarderWachsmuth2018,HarderWachsmuth2018b,MehlitzWachsmuth2016b}, 
the interested reader can find some theoretical investigations of optimization problems 
with complementarity constraints with respect to the function spaces $L^2(\Omega)$ and $H^1_0(\Omega)$. 
Recently, optimal control problems with switching constraints related to \eqref{eq:OCCC} 
have been studied in \cite{ClasonRundKunischBarnard2016,ClasonRundKunisch2017}.

For the remainder of this work, the following standing assumptions on the problem \eqref{eq:OCCC} are postulated.
\begin{assumption}\label{ass:OCass}
    The domain $\Omega\subseteq\R^d$ is nonempty, bounded, and satisfies the cone condition.
    Its boundary will be denoted by $\bd\Omega$.
    Let the observation space $\mathcal D$ as well as the state space $\mathcal Y$ be Hilbert spaces.
    The target $y_\textup{d}\in \mathcal D$ will be fixed.
    The operator $\opA\in\linop{\mathcal Y}{\mathcal Y^\star}$ is an isomorphism while 
    $\opB,\opC\in\linop{H^1(\Omega)}{\mathcal Y^\star}$ and $\opD\in\linop{\mathcal Y}{\mathcal D}$ are arbitrarily chosen.
    Finally, $\varepsilon>0$ holds.
\end{assumption}

Let $\opS\in\linop{H^1(\Omega)^2}{\mathcal D}$ be the control-to-observation operator 
which maps any pair of controls $(u,v)\in H^1(\Omega)^2$ to $\opD[y]$, 
where $y\in\mathcal Y$ is the associated uniquely determined solution of the state equation 
\[
    \opA[y]-\opB[u]-\opC[v]=0.
\]
Then, $\opS$ is a well-defined continuous linear operator since $\opA$ is assumed to be an isomorphism. 

\bigskip

In the following, the existence of optimal solutions to \eqref{eq:OCCC} is discussed. 
First, the overall $H^1$-setting needed for the further theoretical treatment of \eqref{eq:OCCC} 
is analyzed in \cref{sec:existenceH1}. 
Some comments on the setting where controls come from $L^2(\Omega)$ are presented in \cref{sec:existenceL2}.

\subsection{First-order Sobolev spaces}\label{sec:existenceH1}

Since the objective function of \eqref{eq:OCCC} is continuously Fr\'{e}chet differentiable, 
convex, and bounded from below, the only critical point for existence is the 
weak sequential closedness of the complementarity set $\C$.  
\begin{lemma}\label{lem:closedness_of_complementarity_set}
    The set $\C$ is closed.
\end{lemma}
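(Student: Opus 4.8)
The plan is to show that $\C$ is norm-closed in $H^1(\Omega)^2$ by taking an arbitrary sequence $(w_n,z_n)\in\C$ with $(w_n,z_n)\to(w,z)$ in $H^1(\Omega)^2$ and verifying that the limit again satisfies the pointwise complementarity relation $0\leq w(x)\perp z(x)\geq 0$ a.e.\ on $\Omega$. The key observation is that $\C$ is really characterized by three conditions: $w\in H^1_+(\Omega)$, $z\in H^1_+(\Omega)$, and $w(x)z(x)=0$ a.e. Since $H^1_+(\Omega)$ is closed (it is convex and closed, as noted in the excerpt's preliminaries), the nonnegativity of the limits $w,z$ is immediate from $w_n\to w$, $z_n\to z$ in $H^1(\Omega)$.

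The remaining step is to pass to the limit in the product constraint $w_nz_n=0$. First I would use the compact embedding $\opE\colon H^1(\Omega)\hookrightarrow L^2(\Omega)$ (or merely its continuity) to deduce $w_n\to w$ and $z_n\to z$ strongly in $L^2(\Omega)$. A standard argument then shows convergence of the products in $L^1(\Omega)$: writing $w_nz_n-wz=(w_n-w)z_n+w(z_n-z)$ and applying the Cauchy--Schwarz inequality together with the boundedness of $(z_n)$ and the strong $L^2$ convergence of $w_n-w$, $z_n-z$, one obtains $\norm{w_nz_n-wz}{L^1(\Omega)}\to0$. Hence $wz=0$ in $L^1(\Omega)$, i.e.\ $w(x)z(x)=0$ for almost every $x\in\Omega$. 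Combining this with the nonnegativity already established shows $(w,z)\in\C$, so $\C$ is closed.

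I do not expect a genuine obstacle here; the only point requiring a little care is the passage to the limit in the bilinear term $w_nz_n$, which is why one wants strong (not merely weak) $L^2$ convergence of both factors — extracting that from $H^1$-norm convergence is routine via continuity of the embedding. It is worth noting that this lemma deliberately asserts only norm-closedness, not weak sequential closedness: the latter would be false in general, since $w_nz_n=0$ is not stable under weak $L^2$ limits, and indeed the stronger topology of $H^1(\Omega)$ is precisely what the subsequent existence proof will exploit (using $\varepsilon>0$ to gain weak $H^1$ compactness of minimizing sequences, and then the compact embedding to upgrade to the strong $L^2$ convergence needed to keep the limit in $\C$).
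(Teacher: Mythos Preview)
Your proof is correct and follows essentially the same route as the paper: reduce to strong $L^2$-convergence via the continuous embedding $H^1(\Omega)\hookrightarrow L^2(\Omega)$ and then pass to the limit in the pointwise complementarity relation. The only minor difference is in the final step: the paper extracts a pointwise a.e.\ convergent subsequence and invokes closedness of the set $\{(a,b)\in\R^2\,|\,0\leq a\perp b\geq 0\}$, whereas you split into nonnegativity (via closedness of $H^1_+(\Omega)$) and $L^1$-convergence of the product $w_nz_n$ via Cauchy--Schwarz. Both arguments are standard and equally short.

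One correction to your closing remark: you assert that weak sequential closedness of $\C$ ``would be false in general''. In fact the paper's very next lemma (\cref{lem:weak_sequential_closedness_of_complementarity_set}) shows that $\C$ \emph{is} weakly sequentially closed in $H^1(\Omega)^2$, precisely because the compact embedding $H^1(\Omega)\hookrightarrow L^2(\Omega)$ turns weak $H^1$-convergence into strong $L^2$-convergence. What fails is weak sequential closedness of the $L^2$-complementarity set $\widetilde\C$ (see \cref{ex:counterex_weakly_sequentially_closed}); your intuition applies there, not to $\C$ itself.
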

\begin{proof}
    Let $\{(u_k,v_k)\}_{k\in\N}\subset\C$ be a sequence converging to $(\bar u,\bar v)\in  H^1(\Omega)^2$. 
    Due to the continuity of the embedding $H^1(\Omega)\hookrightarrow L^2(\Omega)$, 
    the strong convergences $u_k\to\bar u$ and $v_k\to\bar v$ hold in $L^2(\Omega)$. 
    In particular, these convergences hold (at least along a subsequence) pointwise almost
    everywhere. Due to the closedness of the set $\{(a,b)\in\R^2\,|\,0\leq a\perp b\geq 0\}$,
    the desired result follows.
\end{proof}

Although $\C$ is a nonconvex set, the compactness of the embedding $H^1(\Omega)\hookrightarrow L^2(\Omega)$ can be used 
in order to show that $\C$ is weakly sequentially closed.
\begin{lemma}\label{lem:weak_sequential_closedness_of_complementarity_set}
    The set $\C$ is weakly sequentially closed.
\end{lemma}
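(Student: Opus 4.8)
The plan is to mirror the argument of \cref{lem:closedness_of_complementarity_set}, but to upgrade weak convergence in $H^1(\Omega)$ to strong convergence in $L^2(\Omega)$ by invoking the compactness of the embedding $\opE\in\linop{H^1(\Omega)}{L^2(\Omega)}$, which is available under \cref{ass:OCass} since $\Omega$ satisfies the cone condition.

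Concretely, I would start with an arbitrary sequence $\{(u_k,v_k)\}_{k\in\N}\subset\C$ converging weakly to some $(\bar u,\bar v)$ in $H^1(\Omega)^2$; note that the weak limit automatically belongs to $H^1(\Omega)^2$, so the only thing to check is the pointwise complementarity relation. By the compact embedding, $u_k\to\bar u$ and $v_k\to\bar v$ strongly in $L^2(\Omega)$. Passing to a subsequence (not relabeled), these convergences hold pointwise almost everywhere on $\Omega$. Since each pair $(u_k(x),v_k(x))$ lies in the closed set $\{(a,b)\in\R^2\,|\,0\leq a\perp b\geq 0\}$ for almost every $x\in\Omega$, the pointwise limit $(\bar u(x),\bar v(x))$ lies in this set for almost every $x$ as well. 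Because the strong $L^2$-limit is unique, the pair obtained along the subsequence coincides with $(\bar u,\bar v)$, so $(\bar u,\bar v)\in\C$, which is the claim.

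There is no genuine obstacle here: the entire content is the interplay between the two topologies, and the compact embedding does all the work of bridging them. The only point requiring a moment of care is the subsequence bookkeeping — one must observe that although pointwise a.e.\ convergence is only extracted along a subsequence, the identification of the limit with $(\bar u,\bar v)$ is unambiguous, so no diagonal or contradiction argument is needed.
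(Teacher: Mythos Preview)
Your proof is correct and follows essentially the same route as the paper: both use the compact embedding $H^1(\Omega)\hookrightarrow L^2(\Omega)$ to upgrade weak to strong $L^2$-convergence, then pass to a pointwise-a.e.\ convergent subsequence and invoke closedness of the scalar complementarity set. The only cosmetic difference is that the paper packages the pointwise step as ``closedness of the $L^2$-complementarity set $\widetilde\C$'', whereas you carry out that step explicitly.
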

\begin{proof}
    First, a similar proof as for \cref{lem:closedness_of_complementarity_set} shows that the complementarity set in $L^2(\Omega)$ given by
    \begin{equation}\label{eq:complementarity_set_L2}
        \begin{aligned}[t]
            \widetilde\C :=& 
            \left\{(w,z)\in L^2(\Omega)^2\,\middle|\,0\leq w(x)\perp z(x)\geq 0\text{ a.e. on }\Omega\right\}\\
            =&
            \left\{(w,z)\in L^2_+(\Omega)^2\,\middle|\, \dual{w}{z}{L^2(\Omega)}=0\right\}
        \end{aligned}
    \end{equation}
    is closed as well. 

    Next, choose a sequence $\{(u_k,v_k)\}_{k\in\N}\subset\C$ converging weakly to $(\bar u,\bar v)\in  H^1(\Omega)^2$. 
    Exploiting $u_k\weakly\bar u$ and $v_k\weakly\bar v$ as well as the compactness of the embedding $ H^1(\Omega)\hookrightarrow L^2(\Omega)$, 
    there is a subsequence of $\{(u_k,v_k)\}_{k\in\N}$ that converges strongly to $(\bar u,\bar v)$ in $L^2(\Omega)^2$. 
    Due to the closedness of $\widetilde\C$ in $L^2(\Omega)^2$, $(\bar u,\bar v)\in\widetilde\C\cap H^1(\Omega)^2$ holds, and, consequently, 
    $(\bar u,\bar v)$ is already an element of $\C$. 
    Thus, $\C$ is weakly sequentially closed.
\end{proof}

As a corollary, the existence of optimal solutions to \eqref{eq:OCCC} is obtained.
\begin{corollary}\label{cor:existence}
    The problem \eqref{eq:OCCC} possesses an optimal solution.
\end{corollary}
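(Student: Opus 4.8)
The plan is to apply the direct method of the calculus of variations to the reduced form of \eqref{eq:OCCC}. First I would eliminate the state by writing the problem in terms of the control-to-observation operator $\opS$, so that \eqref{eq:OCCC} is equivalent to minimizing the reduced objective $f(u,v):=\tfrac12\norm{\opS[u,v]-y_\textup d}{\mathcal D}^2+J(u,v)$ over the feasible set $\C\subset H^1(\Omega)^2$. Since $\C$ is nonempty (it contains $(0,0)$), the infimum $m:=\inf_{(u,v)\in\C} f(u,v)$ is a well-defined real number, bounded below by $0$ because both terms in $f$ are nonnegative. I would then pick a minimizing sequence $\{(u_k,v_k)\}_{k\in\N}\subset\C$ with $f(u_k,v_k)\to m$.

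The next step is to extract a weakly convergent subsequence. Because $\varepsilon>0$ (Assumption~\ref{ass:OCass}), the term $\tfrac{\varepsilon}{2}(\norm{u}{H^1(\Omega)}^2+\norm{v}{H^1(\Omega)}^2)$ dominates $f$ from below, so the boundedness of $\{f(u_k,v_k)\}_{k\in\N}$ forces $\{(u_k,v_k)\}_{k\in\N}$ to be bounded in the Hilbert space $H^1(\Omega)^2$. Reflexivity then yields a subsequence (not relabeled) with $(u_k,v_k)\weakly(\bar u,\bar v)$ in $H^1(\Omega)^2$ for some limit point $(\bar u,\bar v)$. By \cref{lem:weak_sequential_closedness_of_complementarity_set}, the set $\C$ is weakly sequentially closed, hence $(\bar u,\bar v)\in\C$, i.e.\ the candidate is feasible.

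Finally I would invoke weak lower semicontinuity of the reduced objective. The operator $\opS$ is bounded and linear, so $\opS[u_k,v_k]\weakly\opS[\bar u,\bar v]$ in $\mathcal D$; the map $d\mapsto\tfrac12\norm{d-y_\textup d}{\mathcal D}^2$ is continuous and convex, hence weakly lower semicontinuous, and likewise each norm-squared term in $J$ is convex and continuous, hence weakly lower semicontinuous on $H^1(\Omega)^2$. Consequently $f(\bar u,\bar v)\le\liminf_{k\to\infty} f(u_k,v_k)=m$, and since $(\bar u,\bar v)$ is feasible we also have $f(\bar u,\bar v)\ge m$, so $(\bar u,\bar v)$ is a minimizer; together with the associated state $\bar y:=\opA^{-1}(\opB[\bar u]+\opC[\bar v])$ this furnishes an optimal solution of \eqref{eq:OCCC}. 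The only genuinely nontrivial ingredient is the weak sequential closedness of the nonconvex set $\C$, and that obstacle has already been overcome in \cref{lem:weak_sequential_closedness_of_complementarity_set} via the compact embedding $H^1(\Omega)\hookrightarrow L^2(\Omega)$; the remaining arguments are the standard coercivity-plus-lower-semicontinuity template of the direct method.
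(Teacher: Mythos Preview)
Your proposal is correct and follows essentially the same route as the paper: the paper's proof simply records that the objective is convex, continuous, and (thanks to $\varepsilon>0$) coercive, invokes \cref{lem:weak_sequential_closedness_of_complementarity_set} for the weak sequential closedness of the feasible set, and then appeals to Tonelli's direct method without further detail. You have merely unpacked that appeal---minimizing sequence, boundedness from coercivity, weak compactness by reflexivity, feasibility of the limit via \cref{lem:weak_sequential_closedness_of_complementarity_set}, and weak lower semicontinuity of the convex continuous objective---and chosen to work with the state-reduced formulation rather than the full $(y,u,v)$ problem, which is an equivalent and equally legitimate choice.
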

\begin{proof}
    The objective functional of \eqref{eq:OCCC} is continuously Fr\'{e}chet differentiable, convex, and (due to $\varepsilon>0$) coercive. 
    Furthermore, by \cref{lem:weak_sequential_closedness_of_complementarity_set}, the complementarity set $\C$ is weakly sequentially closed, 
    and so is the feasible set induced by the PDE constraint. 
    Hence, the claim follows by application of Tonelli's direct method.
\end{proof}

\subsection{Lebesgue spaces}\label{sec:existenceL2}

In the remainder of this section, the existence of optimal controls in $L^2(\Omega)$ is investigated.
In this case, the corresponding model problem is given by
\begin{equation}
    \label{eq:problem_l2}
    \tag{OC$_{L^2}$}
    \left\{
        \begin{aligned}
            \tfrac{1}{2}\norm{\opD[y]-y_\text d}{\mathcal D}^2
            +\tfrac{\alpha_1}{2}\norm{u}{L^2(\Omega)}^2
            +\tfrac{\alpha_2}{2}\norm{v}{L^2(\Omega)}^2&\,\rightarrow\,\min_{y,u,v}\\
            \opA[y]-\tilde\opB[u]-\tilde\opC[v]&\,=\,0\\
            (u,v) &\,\in\,\widetilde \C
        \end{aligned}
    \right.
\end{equation}
where the complementarity set $\widetilde\C$ has been defined in \eqref{eq:complementarity_set_L2}.
Furthermore, $\tilde\opB,\tilde\opC\in\linop{L^2(\Omega)}{\mathcal Y^\star}$ need to be chosen.
As already shown in the proof of \cref{lem:weak_sequential_closedness_of_complementarity_set}, $\widetilde\C$ is closed. 
However, $\widetilde \C$ is in general \emph{not} weakly sequentially closed, as the following example shows.
\begin{example}\label{ex:counterex_weakly_sequentially_closed}
    For any $k\in\mathbb N$, define the two open sets
    \[
        \begin{aligned}
            P_k&:=\left\{x\in\R^d\,\middle|\,\mathsmaller\prod\nolimits_{j=1}^d\sin(k\pi x_j)>0\right\},\\
            Q_k&:=\left\{x\in\R^d\,\middle|\,\mathsmaller\prod\nolimits_{j=1}^d\sin(k\pi x_j)<0\right\}.
        \end{aligned}
    \]
    Now, set $u_k:=\chi_{\Omega\cap P_k}$ and $v_k:=\chi_{\Omega\cap Q_k}$. 
    Obviously, $(u_k,v_k)\in\widetilde\C$ holds true for all $k\in\mathbb N$. 
    Furthermore, the sequence $\{(u_k,v_k)\}_{k\in\mathbb N}\subset L^2(\Omega)^2$ converges weakly to the point 
    $(\tfrac{1}{2}\chi_\Omega,\tfrac{1}{2}\chi_\Omega)$, 
    which does not belong to $\widetilde\C$. 
    Thus, $\widetilde\C$ is not weakly sequentially closed.
\end{example}

It may still happen that there exists an optimal solution of the complementarity-constrained problem \eqref{eq:problem_l2}, 
as illustrated by the following example.
For $\mathcal D:=L^2(\Omega)$ and $\mathcal Y:=H^1(\Omega)$, consider the elliptic optimal control problem
\begin{equation}\label{eq:OCCC_easy}
    \left\{
        \begin{aligned}
            \tfrac{1}{2}\norm{\opE[y]-y_\text d}{L^2(\Omega)}^2
            +\tfrac{\alpha_1}{2}\norm{u}{L^2(\Omega)}^2
            +\tfrac{\alpha_2}{2}\norm{v}{L^2(\Omega)}^2&\,\rightarrow\,\min_{y,u,v}&&&\\
            -\nabla \cdot(\mathbf C\nabla y)+\mathbf a y&\,=\,\chi_{\Omega_u}u+\chi_{\Omega_v}v&\quad&\text{a.e. on }\Omega&\\
            \vec{\mathbf n}\cdot(\mathbf C\nabla y)+\mathbf qy&\,=\,0&&\text{a.e. on }\bd\Omega&\\
            (u,v)&\,\in\,\widetilde \C&&&
        \end{aligned}
    \right.
\end{equation}
where we recall that $\opE$ represents the natural embedding from $H^1(\Omega)$ into $L^2(\Omega)$,
$\alpha_1,\alpha_2>0$ are constants, and
$\mathbf C\in L^\infty(\Omega;S^d(\R))$ (where $S^d(\R)$ denotes the set of real symmetric $d\times d$ matrices) 
satisfies the condition of uniform ellipticity, i.e.,
\begin{equation}\label{eq:uniform_ellipticity}
    \exists c_0>0\;\forall x\in \Omega \;\forall \xi \in \R^d\colon
    \quad
    \xi^\top \mathbf{C}(x)\xi\geq c_0\vert\xi\vert_2^2.
\end{equation}
Moreover, $\mathbf a\in L^\infty(\Omega)$ and $\mathbf q\in L^\infty(\bd\Omega)$ are nonnegative and satisfy 
$\norm{\mathbf a}{L^\infty(\Omega)}+\norm{\mathbf q}{L^\infty(\bd\Omega)}>0$, 
and $\Omega_u,\Omega_v\subset\Omega$ are measurable sets of positive measure satisfying $\Omega_u\cup\Omega_v=\Omega$. 
Here, the PDE constraint is interpreted in the weak sense.
It is well known that the associated differential operator $\mathtt A$ is elliptic, see \cite[Section~6]{Evans2010}, 
and, thus, an isomorphism.
\begin{proposition}\label{prop:trivial_existence}
    The problem \eqref{eq:OCCC_easy} possesses an optimal solution.
\end{proposition}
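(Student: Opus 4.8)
The plan is to get around the failure of weak sequential closedness of $\widetilde\C$ exhibited in \cref{ex:counterex_weakly_sequentially_closed} by replacing \eqref{eq:OCCC_easy} with an auxiliary \emph{convex} optimal control problem whose control variable is the effective right-hand side $g:=\chi_{\Omega_u}u+\chi_{\Omega_v}v\in L^2(\Omega)$. Since $\alpha_1,\alpha_2$ are constants, I would assume without loss of generality that $\alpha_1\le\alpha_2$ and introduce the coefficient $\beta:=\alpha_1\chi_{\Omega_u}+\alpha_2\chi_{\Omega\setminus\Omega_u}\in L^\infty(\Omega)$, for which $0<\alpha_1\le\beta\le\alpha_2$ holds a.e.\ on $\Omega$. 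The surrogate problem is
\begin{equation}\label{eq:OCCC_easy_reduced}
    \left\{
        \begin{aligned}
            \tfrac12\norm{\opE[y]-y_\textup d}{L^2(\Omega)}^2+\tfrac12\int_\Omega\beta g^2\,\mathrm d x&\,\rightarrow\,\min_{y,g}\\
            \opA[y]-g&\,=\,0\\
            g&\,\in\,L^2_+(\Omega),
        \end{aligned}
    \right.
\end{equation}
where $\opA\in\linop{H^1(\Omega)}{H^1(\Omega)^\star}$ is the isomorphic elliptic operator associated with the PDE in \eqref{eq:OCCC_easy} and $g$ is read as an element of $H^1(\Omega)^\star$ via the Gelfand triple.

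Next I would show that \eqref{eq:OCCC_easy} and \eqref{eq:OCCC_easy_reduced} share the same infimum. For the inequality ``$\ge$'', take any feasible $(y,u,v)$ of \eqref{eq:OCCC_easy}; then $g:=\chi_{\Omega_u}u+\chi_{\Omega_v}v$ lies in $L^2_+(\Omega)$ because $u,v\ge 0$, and $(y,g)$ is feasible for \eqref{eq:OCCC_easy_reduced}. Using $\Omega_u\cup\Omega_v=\Omega$ together with the pointwise complementarity $u(x)v(x)=0$, one checks that $g=u$ a.e.\ on $\Omega_u\setminus\Omega_v$, $g=v$ a.e.\ on $\Omega\setminus\Omega_u$, and $g^2=u^2+v^2$ a.e.\ on $\Omega_u\cap\Omega_v$, which gives the pointwise estimate $\tfrac{\alpha_1}2u^2+\tfrac{\alpha_2}2v^2\ge\tfrac12\beta g^2$ a.e.\ on $\Omega$ and hence the desired comparison of the objective values. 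For ``$\le$'', take feasible $(y,g)$ of \eqref{eq:OCCC_easy_reduced} and set $u:=g\chi_{\Omega_u}$ and $v:=g\chi_{\Omega\setminus\Omega_u}$; then $u,v\ge 0$ and $\supp u$, $\supp v$ are essentially disjoint, so $(u,v)\in\widetilde\C$, while $\chi_{\Omega_u}u+\chi_{\Omega_v}v=g$ (since $\Omega\setminus\Omega_u=\Omega_v\setminus\Omega_u$ and $\Omega_u\cup\Omega_v=\Omega$), so that $(y,u,v)$ is feasible for \eqref{eq:OCCC_easy} with objective value exactly $\tfrac12\norm{\opE[y]-y_\textup d}{L^2(\Omega)}^2+\tfrac12\int_\Omega\beta g^2\,\mathrm d x$.

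It then remains to solve \eqref{eq:OCCC_easy_reduced}, which can be done by the direct method exactly as in the proof of \cref{cor:existence}: the objective is continuously Fréchet differentiable, convex, and --- thanks to $\beta\ge\alpha_1>0$ a.e.\ --- coercive with respect to the control $g$ (the state $y=\opA^{-1}g$ being slaved to it); the set $L^2_+(\Omega)$ is closed and convex, hence weakly sequentially closed; and the feasible set given by $\opA[y]=g$ is weakly sequentially closed because $\opA$ is an isomorphism and the embedding $H^1(\Omega)\hookrightarrow L^2(\Omega)$ is compact, so that $g_k\weakly\bar g$ in $L^2(\Omega)$ (hence in $H^1(\Omega)^\star$) yields $y_k=\opA^{-1}g_k\weakly\opA^{-1}\bar g$ in $H^1(\Omega)$ and $\opE[y_k]\to\opE[\opA^{-1}\bar g]$ in $L^2(\Omega)$. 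Thus \eqref{eq:OCCC_easy_reduced} admits a minimizer $(\bar y,\bar g)$, and the construction of the ``$\le$'' step turns it into a feasible point $(\bar y,\bar u,\bar v)$ of \eqref{eq:OCCC_easy} whose objective value equals the common infimum; hence it is optimal.

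The step I expect to require the most care is the reduction itself, in particular pinning down the coefficient $\beta$ on the overlap $\Omega_u\cap\Omega_v$, where complementarity is precisely what collapses $\tfrac{\alpha_1}2u^2+\tfrac{\alpha_2}2v^2$ into $\tfrac12\min\{\alpha_1,\alpha_2\}\,g^2$; once the two problems are matched up, the existence argument for the convex surrogate is routine.
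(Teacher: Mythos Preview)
Your proposal is correct and follows essentially the same approach as the paper: introduce the convex surrogate with a single nonnegative control weighted by $\beta=\alpha_1\chi_{\Omega_u}+\alpha_2\chi_{\Omega_v\setminus\Omega_u}$ (this is exactly the paper's weight $(\sqrt{\alpha_1}\chi_{\Omega_u}+\sqrt{\alpha_2}\chi_{\Omega_v\setminus\Omega_u})^2$), solve it by the direct method, and lift the minimizer via $\bar u=\chi_{\Omega_u}\bar g$, $\bar v=\chi_{\Omega\setminus\Omega_u}\bar g$. Your choice $g=\chi_{\Omega_u}u+\chi_{\Omega_v}v$ in the ``$\ge$'' step, being the actual PDE right-hand side, is in fact slightly cleaner than the paper's $z=\chi_{\Omega_u}u+\chi_{\Omega_v\setminus\Omega_u}v$ for checking feasibility of the surrogate.
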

\begin{proof}
    Assume without loss of generality that $\alpha_1\leq \alpha_2$; the other case can be handled analogously. 
    Consider then the surrogate optimal control problem
    \begin{equation}\label{eq:surrogateOC}
        \left\{
            \begin{aligned}
                \tfrac{1}{2}\norm{\opE[y]-y_\text d}{L^2(\Omega)}^2
                +\tfrac{1}{2}\norm{
                    \left(\sqrt{\alpha_1}\chi_{\Omega_u}+\sqrt{\alpha_2}\chi_{\Omega_v\setminus\Omega_u}\right)z
                }{L^2(\Omega)}^2&\,\rightarrow\,\min_{y,z}&&&\\
                -\nabla \cdot(\mathbf C\nabla y)+\mathbf a y&\,=\,z&\quad&\text{a.e. on }\Omega&\\
                \vec{\textbf{n}}\cdot(\mathbf C\nabla y)+\mathbf qy&\,=\,0&&\text{a.e. on }\bd\Omega&\\
                z&\,\in\,L^2_+(\Omega).&&&
            \end{aligned}
        \right.
    \end{equation}
    Note that its objective is equivalent to
    \[
        H^1(\Omega)\times L^2(\Omega)
        \ni(y,z)\mapsto 
        \tfrac{1}{2}\norm{\opE[y]-y_\text d}{L^2(\Omega)}^2
        +\tfrac{\alpha_1}{2}\norm{\chi_{\Omega_u}z}{L^2(\Omega)}^2
        +\tfrac{\alpha_2}{2}\norm{\chi_{\Omega_v\setminus\Omega_u}z}{L^2(\Omega)}^2
        \in\R.
    \]
    The ellipticity of the underlying PDE in \eqref{eq:surrogateOC} implies 
    that the associated control-to-observation operator $\check{\opS}\colon L^2(\Omega)\to L^2(\Omega)$ 
    is linear and continuous, see \cite[Section~6.2]{Evans2010}.
    Observing that $\Omega_u\cup\Omega_v=\Omega$ holds by assumption, the reduced objective functional
    \[
        L^2(\Omega)\ni z
        \mapsto 
        \tfrac{1}{2}\norm{\check{\opS}[z]-y_\text d}{L^2(\Omega)}^2
        +\tfrac{\alpha_1}{2}\norm{\chi_{\Omega_u}z}{L^2(\Omega)}^2
        +\tfrac{\alpha_2}{2}\norm{\chi_{\Omega_v\setminus\Omega_u}z}{L^2(\Omega)}^2\in\R
    \]
    is convex, continuous, and coercive. 
    This shows that the optimal control problem \eqref{eq:surrogateOC} possesses an optimal solution 
    $(\bar y,\bar z)\in H^1(\Omega)\times L^2(\Omega)$ with objective value $\bar m\in\R$.\\
    Let $(y,u,v)\in H^1(\Omega)\times L^2(\Omega)\times L^2(\Omega)$ be feasible to \eqref{eq:OCCC_easy}. 
    Defining $z:=\chi_{\Omega_u}u+\chi_{\Omega_v\setminus \Omega_u}v$, $(y,z)$ is feasible for \eqref{eq:surrogateOC}. 
    Then, the estimate
    \begin{multline*}
        \tfrac{1}{2}\norm{\opE[y]-y_\text d}{L^2(\Omega)}^2
        +\tfrac{\alpha_1}{2}\norm{u}{L^2(\Omega)}^2
        +\tfrac{\alpha_2}{2}\norm{v}{L^2(\Omega)}^2\\
        \begin{aligned}
            &\geq\tfrac{1}{2}\norm{\opE[y]-y_\text{d}}{L^2(\Omega)}^2
            +\tfrac{\alpha_1}{2}\norm{\chi_{\Omega_u}u}{L^2(\Omega)}^2
            +\tfrac{\alpha_2}{2}\norm{\chi_{\Omega_v\setminus\Omega_u}v}{L^2(\Omega)}^2\\
            &=\tfrac{1}{2}\norm{\opE[y]-y_\text{d}}{L^2(\Omega)}^2
            +\tfrac{\alpha_1}{2}\norm{\chi_{\Omega_u}z}{L^2(\Omega)}^2
            +\tfrac{\alpha_2}{2}\norm{\chi_{\Omega_v\setminus\Omega_u}z}{L^2(\Omega)}^2 \geq\bar m
        \end{aligned}
    \end{multline*}
    is obtained.
    In particular, the objective value of \eqref{eq:OCCC_easy} is bounded from below by $\bar m$.

    Define $\bar u:=\chi_{\Omega_u}\bar z$ and $\bar v:=\chi_{\Omega_v\setminus\Omega_u}\bar z$. 
    Then, $(\bar y,\bar u,\bar v)$ is feasible to \eqref{eq:OCCC_easy} 
    since $\bar y$ is the state associated with $\bar z$ and $\chi_{\Omega_u}\bar u+\chi_{\Omega_v}\bar v=\bar z$ holds true. 
    Moreover, the relation
    \begin{multline*}
        \tfrac{1}{2}\norm{\opE[\bar y]-y_\text d}{L^2(\Omega)}^2  
        +\tfrac{\alpha_1}{2}\norm{\bar u}{L^2(\Omega)}^2
        +\tfrac{\alpha_2}{2}\norm{\bar v}{L^2(\Omega)}^2\\
        =\tfrac{1}{2}\norm{\opE[\bar y]-y_\text d}{L^2(\Omega)}^2
        +\tfrac{\alpha_1}{2}\norm{\chi_{\Omega_u}\bar z}{L^2(\Omega)}^2
        +\tfrac{\alpha_2}{2}\norm{\chi_{\Omega_v\setminus\Omega_u}\bar z}{L^2(\Omega)}^2=\bar m
    \end{multline*}
    follows.
    Thus, $(\bar y,\bar u,\bar v)$ is an optimal solution of \eqref{eq:OCCC_easy}.
\end{proof}
Note that the proof of \cref{prop:trivial_existence} yields a strategy for the solution of \eqref{eq:OCCC_easy} 
by means of standard arguments from optimal control by solving the surrogate problem \eqref{eq:surrogateOC}. 

\section{Optimality conditions}\label{sec:optimality_conditions}

Consider the so-called state-\emph{reduced} problem
\begin{equation}\label{eq:OCCC_reduced}
    \left\{
        \begin{aligned}
            \tfrac{1}{2}\norm{\opS [u,v]-y_\text{d}}{\mathcal D}^2+J(u,v)&\,\rightarrow\,\min_{u,v}\\
            (u,v)&\,\in\,\C
        \end{aligned}
    \right.
\end{equation}
which is equivalent to \eqref{eq:OCCC} by definition of the control-to-observation operator $\opS $. 
Using the embedding operator $\opE\colon H^1(\Omega)\to L^2(\Omega)$, \eqref{eq:OCCC_reduced} can be stated equivalently as
\begin{equation}\label{eq:OCCC_as_MPCC}
    \left\{
        \begin{aligned}
            \tfrac{1}{2}\norm{\opS [u,v]-y_\text{d}}{\mathcal D}^2+J(u,v)&\,\rightarrow\,\min_{u,v}\\
            \opE[u]&\,\in\, L^2_+(\Omega)\\
            \opE[v]&\,\in\, L^2_+(\Omega)\\
            \dual{\opE[u]}{\opE[v]}{L^2(\Omega)}&\,=\,0
        \end{aligned}
    \right.
\end{equation}
which is a generalized MPCC in the Banach space $L^2(\Omega)$. 
It was shown in \cite[Lemma~3.1]{MehlitzWachsmuth2016} that 
Robinson's constraint qualification,
see \cite[Section~2.3.4]{BonnansShapiro2000} for its definition, some discussion, and 
suitable references to the literature, does not hold at the feasible points of this problem. 
Moreover, since $\opE$ is not surjective, the constraint qualifications needed to show 
that locally optimal solutions of this problem satisfy MPCC-tailored stationarity conditions 
(e.g., the weak or strong stationarity conditions) are not satisfied, 
see \cite{MehlitzWachsmuth2016,Wachsmuth2015} for details.

On the other hand, it is still possible to derive necessary optimality conditions for \eqref{eq:OCCC_reduced} 
using a standard trick from finite-dimensional MPCC theory: 
Define appropriate surrogate problems which do not contain a complementarity constraint anymore 
and handle them with the classical KKT conditions in Banach spaces.

In order to formulate an appropriate surrogate problem, let $(\bar u,\bar v)\in H^1(\Omega)^2$ 
be a feasible point of \eqref{eq:OCCC_reduced} and  
define the measurable sets
\begin{align}
    I^{+0}(\bar u,\bar v)&:=\{x\in\Omega\,|\,\bar u(x)>0\,\land\,\bar v(x)=0\},
    \label{eq:I+0}\\
    I^{0+}(\bar u,\bar v)&:=\{x\in\Omega\,|\,\bar u(x)=0\,\land\,\bar v(x)>0\},
    \label{eq:I0+}\\
    I^{00}(\bar u,\bar v)&:=\{x\in\Omega\,|\,\bar u(x)=0\,\land\,\bar v(x)=0\}.
    \label{eq:I00}
\end{align}
Noting that $L^2(\Omega)$ is a space of equivalence classes, it should be mentioned
that these sets are well-defined up to sets of Lebesgue measure zero.
This will be taken into account in the following.
If $(\bar u,\bar v)$ is a locally optimal solution of \eqref{eq:OCCC_reduced}, 
then it is also a locally optimal solution of the auxiliary problems
\begin{equation}\label{eq:rNLP1}\tag{rNLP$_{\bar u}$}
    \left\{
        \begin{aligned}
            \tfrac{1}{2}\norm{\opS [u,v]-y_\text{d}}{\mathcal D}^2+J(u,v)&\,\rightarrow\,\min_{u,v}&&&\\
            u&\,\geq\,0&\quad&\text{a.e. on }I^{+0}(\bar u,\bar v)&\\
            u&\,=\,0&\quad&\text{a.e. on }I^{0+}(\bar u,\bar v)\cup I^{00}(\bar u,\bar v)&\\
            v&\,\geq 0&&\text{a.e. on }I^{0+}(\bar u,\bar v)\cup I^{00}(\bar u,\bar v)&\\
            v&\,=\,0&&\text{a.e. on }I^{+0}(\bar u,\bar v)&
        \end{aligned}
    \right.
\end{equation}
and
\begin{equation}\label{eq:rNLP2}\tag{rNLP$_{\bar v}$}
    \left\{
        \begin{aligned}
            \tfrac{1}{2}\norm{\opS [u,v]-y_\text{d}}{\mathcal D}^2+J(u,v)&\,\rightarrow\,\min_{u,v}&&&\\
            u&\,\geq\,0&\quad&\text{a.e. on }I^{+0}(\bar u,\bar v)\cup I^{00}(\bar u,\bar v)&\\
            u&\,=\,0&\quad&\text{a.e. on }I^{0+}(\bar u,\bar v)&\\
            v&\,\geq 0&&\text{a.e. on }I^{0+}(\bar u,\bar v)&\\
            v&\,=\,0&&\text{a.e. on }I^{+0}(\bar u,\bar v)\cup I^{00}(\bar u,\bar v)
        \end{aligned}
    \right.
\end{equation}
since their respective feasible sets are smaller than $\mathbb C$ but contain $(\bar u,\bar v)$. 
By standard notion, see \cite{PangFukushima1999,ScheelScholtes2000,Wachsmuth2015}, 
\eqref{eq:rNLP1} and \eqref{eq:rNLP2} are referred to as \emph{restricted nonlinear problems}. 
Furthermore, the corresponding \emph{relaxed nonlinear problem} is introduced by means of
\begin{equation}\label{eq:RNLP}\tag{RNLP}
    \left\{
        \begin{aligned}
            \tfrac{1}{2}\norm{\opS [u,v]-y_\text{d}}{\mathcal D}^2+J(u,v)&\,\rightarrow\,\min_{u,v}&&&\\
            u&\,\geq\,0&\quad&\text{a.e. on }I^{+0}(\bar u,\bar v)\cup I^{00}(\bar u,\bar v)&\\
            u&\,=\,0&\quad&\text{a.e. on }I^{0+}(\bar u,\bar v)&\\
            v&\,\geq 0&&\text{a.e. on }I^{0+}(\bar u,\bar v)\cup I^{00}(\bar u,\bar v) &\\
            v&\,=\,0&&\text{a.e. on }I^{+0}(\bar u,\bar v).
        \end{aligned}
    \right.
\end{equation}
Observe that the feasible points $(u,v)\in H^1(\Omega)^2$ of \eqref{eq:RNLP} 
do not necessarily satisfy the complementarity condition $(u,v)\in\C$. 
Combining standard techniques from finite-dimensional MPCC theory and optimization 
in Banach spaces, the following result is obtained, 
see also \cite[Theorems~3.1 and 5.2]{Wachsmuth2015}.
It should be noted that due to the appearance of the two control variables
$u$ and $v$ in \eqref{eq:OCCC_reduced}, there will be two Lagrange multipliers $\mu$ and $\nu$ corresponding
to $u$ and $v$, respectively, in the stationarity system as well. 
In particular, the pair $(\mu,\nu)\in H^1(\Omega)^\star\times H^1(\Omega)^\star$ may be identified with a functional
from $(H^1(\Omega)^2)^\star$.
\begin{theorem}\label{thm:S_stationarity}
    Let $(\bar u,\bar v)\in H^1(\Omega)^2$ be a locally optimal solution of \eqref{eq:OCCC_reduced}. 
    Then, there exist multipliers $\mu,\nu\in H^1(\Omega)^\star$ satisfying
    \begin{subequations}\label{eq:SSt}
        \begin{align}
            \label{eq:SSt1}
            &0=\opS ^\star\bigl[\opS [\bar u,\bar v]-y_\textup{d}\bigr]+J'(\bar u,\bar v)+(\mu,\nu),\\
            \label{eq:SSt2}
            &\mu\in
            \left\{
                z\in H^1(\Omega)\,\middle|
                \begin{aligned}
                    &z\geq 0&&\text{a.e. on }I^{+0}(\bar u,\bar v)\cup I^{00}(\bar u,\bar v)\\
                    &z=0&&\text{a.e. on }I^{0+}(\bar u,\bar v)
                \end{aligned}
            \right\}^\circ,\\
            \label{eq:SSt3}
            &\dual{\mu}{\bar u}{ H^1(\Omega)}=0,\\
            \label{eq:SSt4}
            &\nu\in
            \left\{
                z\in H^1(\Omega)\,\middle|
                \begin{aligned}
                    &z\geq 0&&\text{a.e. on }I^{0+}(\bar u,\bar v)\cup I^{00}(\bar u,\bar v)\\
                    &z=0&&\text{a.e. on }I^{+0}(\bar u,\bar v)
                \end{aligned}
            \right\}^\circ,\\
            \label{eq:SSt5}
            &\dual{\nu}{\bar v}{ H^1(\Omega)}=0.
        \end{align}
    \end{subequations}
\end{theorem}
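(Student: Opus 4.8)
The plan is to follow the classical MPCC strategy of exploiting the two restricted problems \eqref{eq:rNLP1} and \eqref{eq:rNLP2}, extract KKT multipliers from each via a standard constraint qualification, and then recombine the two stationarity systems into the single system \eqref{eq:SSt}. First I would note that \eqref{eq:rNLP1} and \eqref{eq:rNLP2} are convex optimization problems in the Hilbert space $H^1(\Omega)^2$: the objective $(u,v)\mapsto\tfrac12\norm{\opS[u,v]-y_\textup{d}}{\mathcal D}^2+J(u,v)$ is convex and continuously Fréchet differentiable (being quadratic), and the constraint sets are translates of closed convex cones described by pointwise sign and vanishing conditions on the fixed measurable sets $I^{+0}$, $I^{0+}$, $I^{00}$. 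Since $(\bar u,\bar v)$ is feasible and the constraint sets are convex, a constraint qualification of Robinson/Slater type holds automatically for these \emph{convex} constraint sets — concretely, the point $(\bar u,\bar v)$ itself lies in every set and the feasible set is polyhedric/convex, so the generalized KKT conditions in Banach spaces apply (cf. \cite[Section~2.3.4]{BonnansShapiro2000}). Thus there exist multipliers associated with the constraint cones, i.e., elements of the polar cones of the respective feasible directions, satisfying a stationarity equation of the form $0=\opS^\star[\opS[\bar u,\bar v]-y_\textup{d}]+J'(\bar u,\bar v)+\lambda$ together with a complementary slackness condition.

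Concretely, applying this to \eqref{eq:rNLP1} yields a multiplier, say $(\mu_1,\nu_1)\in(H^1(\Omega)^2)^\star$, lying in the polar of the tangent (= feasibility) cone of \eqref{eq:rNLP1} at $(\bar u,\bar v)$ with $\dual{(\mu_1,\nu_1)}{(\bar u,\bar v)}{H^1(\Omega)^2}=0$; since the cone splits as a product in the $u$- and $v$-components, this gives $\mu_1$ in the polar cone appearing in \eqref{eq:SSt2}, $\nu_1$ in the polar of $\{z\,|\,z\ge0\text{ on }I^{0+}\cup I^{00},\ z=0\text{ on }I^{+0}\}$, and componentwise complementary slackness $\dual{\mu_1}{\bar u}{H^1(\Omega)}=0$, $\dual{\nu_1}{\bar v}{H^1(\Omega)}=0$ (each term is already nonpositive by the polarity, and they sum to zero, forcing both to vanish — or one argues directly using $\bar u=0$ a.e. on $I^{0+}\cup I^{00}$). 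Similarly, \eqref{eq:rNLP2} furnishes $(\mu_2,\nu_2)$ with $\mu_2$ in the polar of $\{z\,|\,z\ge0\text{ on }I^{+0}\cup I^{00},\ z=0\text{ on }I^{0+}\}$ and $\nu_2$ in the cone of \eqref{eq:SSt4}. The key combinatorial observation is that the $\mu$-cone from \eqref{eq:rNLP1} equals the $\mu$-cone appearing in \eqref{eq:SSt2} (both restrict $u$ on $I^{+0}\cup I^{00}$ and force vanishing on $I^{0+}$), and likewise the $\nu$-cone from \eqref{eq:rNLP2} is exactly that in \eqref{eq:SSt4}; so I would simply \emph{set} $\mu:=\mu_1$ and $\nu:=\nu_2$. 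It remains only to verify that this pair satisfies the gradient equation \eqref{eq:SSt1}: from \eqref{eq:rNLP1} one has $0=\opS^\star[\opS[\bar u,\bar v]-y_\textup{d}]+J'(\bar u,\bar v)+(\mu_1,\nu_1)$, and from \eqref{eq:rNLP2}, $0=\opS^\star[\opS[\bar u,\bar v]-y_\textup{d}]+J'(\bar u,\bar v)+(\mu_2,\nu_2)$; hence $(\mu_1,\nu_1)=(\mu_2,\nu_2)$, so in particular $\mu_1=\mu_2$ and $\nu_1=\nu_2$, and the single equation \eqref{eq:SSt1} with $(\mu,\nu)=(\mu_1,\nu_2)=(\mu_1,\nu_1)$ holds. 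Finally \eqref{eq:SSt3} and \eqref{eq:SSt5} are the complementary slackness conditions recorded above.

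The main obstacle is the verification of the applicable constraint qualification for the restricted/relaxed problems so that KKT multipliers exist, and — more delicately — making sure the multipliers extracted from the \emph{two} problems \eqref{eq:rNLP1} and \eqref{eq:rNLP2} can genuinely be taken to be the \emph{same} functional in $(H^1(\Omega)^2)^\star$. The argument above via uniqueness of $\opS^\star[\opS[\bar u,\bar v]-y_\textup{d}]+J'(\bar u,\bar v)$ handles this cleanly once one knows multipliers exist for each problem. A secondary technical point is the description of the polar cone of a pointwise-defined convex cone in $H^1(\Omega)$: although the constraints are pointwise a.e.\ conditions, the polar is taken in $H^1(\Omega)^\star$, so the multipliers live in the dual of the Sobolev space and need not be represented by $L^2$-functions; this is precisely why the theorem states the conclusion in terms of the abstract polar cones $\{\cdot\}^\circ$ rather than sign conditions on $\mu,\nu$, and why no further regularity of the multipliers is claimed. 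I would therefore keep the polar-cone formulation throughout and not attempt to unpack it, referring to \cite[Theorems~3.1 and 5.2]{Wachsmuth2015} for the analogous finite-dimensional recombination and to \cite{BonnansShapiro2000} for the Banach-space KKT theory.
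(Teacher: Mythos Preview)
Your overall strategy is exactly the paper's: apply KKT to the two restricted problems \eqref{eq:rNLP1} and \eqref{eq:rNLP2}, obtain two multiplier pairs, identify them via the gradient equation, and read off the system \eqref{eq:SSt}. However, your ``key combinatorial observation'' is wrong, and with it the bookkeeping of which cone comes from which problem.

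In \eqref{eq:rNLP1} the constraint on $u$ is $u\geq 0$ a.e.\ on $I^{+0}(\bar u,\bar v)$ and $u=0$ a.e.\ on $I^{0+}(\bar u,\bar v)\cup I^{00}(\bar u,\bar v)$; this is the cone the paper calls $\mathcal K_{+0}$, and it is \emph{strictly smaller} than the cone $\mathcal K_{+0,00}$ appearing in \eqref{eq:SSt2} (the latter only forces $u\geq 0$ on $I^{00}$, not $u=0$). Consequently $\mu_1\in\mathcal K_{+0}^\circ$, which is a \emph{larger} polar cone than $\mathcal K_{+0,00}^\circ$, so from \eqref{eq:rNLP1} alone you cannot conclude that $\mu_1$ satisfies \eqref{eq:SSt2}. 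Symmetrically, the $v$-cone in \eqref{eq:rNLP2} is $\mathcal K_{0+}$, not the cone in \eqref{eq:SSt4}, so $\nu_2\in\mathcal K_{0+}^\circ$ does not directly yield \eqref{eq:SSt4}. The correct assignments are swapped: it is $\mu_2$ (from \eqref{eq:rNLP2}) that lies in $\mathcal K_{+0,00}^\circ$ and $\nu_1$ (from \eqref{eq:rNLP1}) that lies in $\mathcal K_{0+,00}^\circ$. The paper therefore sets $\mu:=\mu^2$ and $\nu:=\nu^1$. Your uniqueness argument $(\mu_1,\nu_1)=(\mu_2,\nu_2)$ is correct and ultimately rescues the conclusion, but the intermediate claim that the $u$-cone of \eqref{eq:rNLP1} coincides with the cone in \eqref{eq:SSt2} is false and should be replaced by the inclusion $\mathcal K_{+0,00}^\circ\subset\mathcal K_{+0}^\circ$ together with the identification $\mu_1=\mu_2\in\mathcal K_{+0,00}^\circ$.
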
 
\begin{proof}
    Introducing the cones
    \begin{align*}
        \mathcal K_{+0}&:=\left\{z\in H^1(\Omega)\,\middle|
            \begin{aligned}
                &z\geq 0&&\text{a.e. on }I^{+0}(\bar u,\bar v)\\
                &z=0 &&\text{a.e. on }I^{0+}(\bar u,\bar v)\cup I^{00}(\bar u,\bar v)
        \end{aligned}\right\},\\
        \mathcal K_{0+,00}&:=\left\{z\in H^1(\Omega)\,\middle|
            \begin{aligned}
                &z\geq 0&&\text{a.e. on }I^{0+}(\bar u,\bar v)\cup I^{00}(\bar u,\bar v)\\
                &z=0&&\text{a.e. on }I^{+0}(\bar u,\bar v)
        \end{aligned}\right\},
    \end{align*}
    \eqref{eq:rNLP1} is equivalent to
    \[
        \left\{
            \begin{aligned}
                \tfrac{1}{2}\norm{\opS [u,v]-y_\text{d}}{\mathcal D}^2+J(u,v)&\,\rightarrow\,\min_{u,v}\\
                u&\,\in\,\mathcal K_{+0}\\
                v&\,\in\,\mathcal K_{0+,00}.
            \end{aligned}
        \right.
    \]
    Since $(\bar u,\bar v)$ is a locally optimal solution of \eqref{eq:rNLP1}, 
    there exist multipliers $\mu^1,\nu^1\in H^1(\Omega)^\star$ which satisfy the corresponding KKT conditions
    \begin{equation}\label{eq:KKTrNLP1}
        \left\{
            \begin{aligned}
                &0=\opS ^\star\bigl[\opS [\bar u,\bar v]-y_\text{d}\bigr]+J'(\bar u,\bar v)+(\mu^1,\nu^1),\\
                &\mu^1\in\mathcal K_{+0}^\circ\cap\bar u^\perp,\\
                &\nu^1\in\mathcal K_{0+,00}^\circ\cap\bar v^\perp,
            \end{aligned}
        \right.
    \end{equation}
    see \cite[Theorem~3.9]{BonnansShapiro2000}. 
    Considering \eqref{eq:rNLP2} in a similar way, there exist $\mu^2,\nu^2\in H^1(\Omega)^\star$ which satisfy
    \begin{equation}\label{eq:KKTrNLP2}
        \left\{
            \begin{aligned}
                &0=\opS ^\star\bigl[\opS [\bar u,\bar v]-y_\text{d}\bigr]+J'(\bar u,\bar v)+(\mu^2,\nu^2),\\
                &\mu^2\in\mathcal K_{+0,00}^\circ\cap\bar u^\perp,\\
                &\nu^2\in\mathcal K_{0+}^\circ\cap\bar v^\perp,
            \end{aligned}
        \right.
    \end{equation}
    where 
    \begin{align*}
        \mathcal K_{+0,00}&:=\left\{z\in H^1(\Omega)\,\middle|
            \begin{aligned}
                &z\geq 0&&\text{a.e. on }I^{+0}(\bar u,\bar v)\cup I^{00}(\bar u,\bar v)\\
                &z=0 &&\text{a.e. on }I^{0+}(\bar u,\bar v)
        \end{aligned}\right\},\\
        \mathcal K_{0+}&:=\left\{z\in H^1(\Omega)\,\middle|
            \begin{aligned}
                &z\geq 0&&\text{a.e. on }I^{0+}(\bar u,\bar v)\\
                &z=0&&\text{a.e. on }I^{+0}(\bar u,\bar v)\cup I^{00}(\bar u,\bar v)
        \end{aligned}\right\}.
    \end{align*}
    Combining the respective first condition in \eqref{eq:KKTrNLP1} and \eqref{eq:KKTrNLP2} 
    yields $\mu^1=\mu^2$ and $\nu^1=\nu^2$. 
    Since $\mathcal K_{+0,00}^\circ\cap\bar u^\perp$ is a subset of $\mathcal K_{+0}^\circ\cap\bar u^\perp$ 
    while $\mathcal K_{0+,00}^\circ\cap\bar v^\perp$ is a subset of $\mathcal K_{0+}^\circ\cap\bar v^\perp$, 
    the desired result is obtained by setting $\mu:=\mu^2$ and $\nu:=\nu^1$.  
\end{proof}
Note that the system \eqref{eq:SSt} coincides with the KKT conditions of \eqref{eq:RNLP}. 
In this regard, it is reasonable to call the conditions \eqref{eq:SSt} a \emph{strong stationarity-type system}. 

\begin{remark}\label{rem:multipliers}
    It is difficult to give an explicit characterization of the multipliers $\mu,\nu\in H^1(\Omega)^\star$. 
    Assume that $\Omega$ has a Lipschitz boundary.
    Introducing $\mathcal H_\mathcal A:=\{z\in H^1(\Omega)\,|\,z=0\;\text{a.e. on }\mathcal A\}$ for a fixed measurable
    set $\mathcal A\subset \Omega$ and using the relation 
    $H^1_+(\Omega)^\circ=H^1(\Omega)^\star\cap\mathcal M_-(\overline\Omega)$, 
    see \cite[Lemma~3.1]{DengMehlitzPruefert2018b}, it holds that
    \[
        \mu     \in\left(H^1_+(\Omega)\cap\mathcal H_{I^{0+}(\bar u,\bar v)}\right)^\circ
        =\cl\left(H^1(\Omega)^\star\cap\mathcal M_-(\overline\Omega)+\mathcal H_{I^{0+}(\bar u,\bar v)}^\perp\right)
    \]
    where $\mathcal M_-(\overline\Omega)$ denotes the set of all finite, nonpositive Borel measures on $\overline\Omega$.
    A similar result can be obtained to characterize $\nu$. However, due to the appearance of the closure as well
    as the annihilated subspace associated with $\mathcal H_{I^{+0}(\bar u,\bar v)}$, this characterization is of limited practical use;
    in particular, it cannot be deduced that $\mu$ and $\nu$ are measures.
    Applying the machinery of capacity theory, see \cite{AttouchButtazzoMichaille2006,BonnansShapiro2000},
    a more advanced approach to the characterization of $\mu$ and $\nu$ can be attempted.
    For this purpose, one could strengthen the constraints in \eqref{eq:rNLP1}, \eqref{eq:rNLP2}, and \eqref{eq:RNLP} to hold 
    \emph{quasi-everywhere} on the respective subdomains, i.e., the respective conditions hold up to sets of $H^1$-capacity zero. 
    Then, one needs to find explicit expressions for the polar cone associated with sets of type
    \[
        \left\{z\in H^1(\Omega)\,\middle|\,
            \begin{aligned}
                z\geq 0&\quad\text{quasi-everywhere on }\mathcal A\\
                z=0 &\quad\text{quasi-everywhere on }\Omega\setminus \mathcal A
            \end{aligned}
        \right\}
    \]
    where $\mathcal A\subset\Omega$ is measurable. 
    The price one has to pay when using this approach is a less restrictive stationarity system than \eqref{eq:SSt}. 
    In particular, the polar cones from \eqref{eq:SSt2} and \eqref{eq:SSt4} would be replaced by larger ones.
\end{remark}

In order to state necessary optimality conditions of strong stationarity-type that avoid 
the appearance of multipliers and allow a numerical implementation, one 
can exploit the definition of the polar cone in the system \eqref{eq:SSt}. 
\begin{corollary}\label{cor:consequences_of_S_Stationarity}
    Let $(\bar u,\bar v)\in H^1(\Omega)^2$ be a locally optimal solution of \eqref{eq:OCCC_reduced}. 
    Then, the condition
    \[
        0=\dual{\opS [\bar u,\bar v]-y_\textup{d}}{\opS [\bar u,\bar v]}{\mathcal D}+J'(\bar u,\bar v)[\bar u,\bar v]
    \]
    holds, and for any pair $(z_u,z_v)\in H^1_+(\Omega)\times  H^1_+(\Omega)$, 
    \[
        \left.
            \begin{aligned}
                &\supp z_u\subseteq I^{+0}(\bar u,\bar v)\cup I^{00}(\bar u,\bar v)\\
                &\supp z_v\subseteq I^{0+}(\bar u,\bar v)\cup I^{00}(\bar u,\bar v)
            \end{aligned}
        \right\}
        \,\Longrightarrow\,
        \dual{\opS ^\star\bigl[\opS [\bar u,\bar v]-y_\textup{d}\bigr]
        +J'(\bar u,\bar v)}{(z_u,z_v)}{ H^1(\Omega)^2}\geq 0.
    \]
\end{corollary}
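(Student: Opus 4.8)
The plan is to read off both assertions directly from the strong stationarity-type system \eqref{eq:SSt} provided by \cref{thm:S_stationarity}; let $\mu,\nu\in H^1(\Omega)^\star$ be the multipliers it furnishes.

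First I would establish the scalar identity. Pairing the operator equation \eqref{eq:SSt1} with the feasible point $(\bar u,\bar v)$ itself yields
\[
    0=\dual{\opS^\star\bigl[\opS[\bar u,\bar v]-y_\textup{d}\bigr]}{(\bar u,\bar v)}{H^1(\Omega)^2}
    +J'(\bar u,\bar v)[\bar u,\bar v]
    +\dual{\mu}{\bar u}{H^1(\Omega)}+\dual{\nu}{\bar v}{H^1(\Omega)}.
\]
By the complementarity relations \eqref{eq:SSt3} and \eqref{eq:SSt5} the last two terms vanish, while the definition of the adjoint turns the first term into $\dual{\opS[\bar u,\bar v]-y_\textup{d}}{\opS[\bar u,\bar v]}{\mathcal D}$; this is precisely the claimed identity.

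Next I would address the inequality. Fix $(z_u,z_v)\in H^1_+(\Omega)\times H^1_+(\Omega)$ with the stated support inclusions. Since $(\bar u,\bar v)\in\C$ forces $\bar u(x)\bar v(x)=0$ for almost every $x\in\Omega$, the sets \eqref{eq:I+0}, \eqref{eq:I0+}, and \eqref{eq:I00} cover $\Omega$ up to a Lebesgue null set. Hence $\supp z_u\subseteq I^{+0}(\bar u,\bar v)\cup I^{00}(\bar u,\bar v)$ entails $z_u=0$ a.e. on $I^{0+}(\bar u,\bar v)$, and together with $z_u\geq 0$ a.e. on $\Omega$ this shows that $z_u$ lies in the convex cone whose polar appears in \eqref{eq:SSt2}; consequently $\dual{\mu}{z_u}{H^1(\Omega)}\leq 0$, and the symmetric argument using \eqref{eq:SSt4} gives $\dual{\nu}{z_v}{H^1(\Omega)}\leq 0$. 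Rewriting \eqref{eq:SSt1} as $\opS^\star[\opS[\bar u,\bar v]-y_\textup{d}]+J'(\bar u,\bar v)=-(\mu,\nu)$ and testing with $(z_u,z_v)$ then gives
\[
    \dual{\opS^\star\bigl[\opS[\bar u,\bar v]-y_\textup{d}\bigr]+J'(\bar u,\bar v)}{(z_u,z_v)}{H^1(\Omega)^2}
    =-\dual{\mu}{z_u}{H^1(\Omega)}-\dual{\nu}{z_v}{H^1(\Omega)}\geq 0,
\]
which is the asserted inequality.

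This argument is essentially bookkeeping once \cref{thm:S_stationarity} is in hand; the only step needing a little care is verifying that $z_u$ and $z_v$ actually belong to the cones whose polars are taken in \eqref{eq:SSt2} and \eqref{eq:SSt4}, which relies on the observation that for a feasible pair the index sets partition $\Omega$ up to a null set. I do not anticipate any genuine obstacle beyond this.
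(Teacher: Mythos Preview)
Your proof is correct and follows essentially the same approach as the paper: invoke \cref{thm:S_stationarity}, test \eqref{eq:SSt1} with $(\bar u,\bar v)$ together with \eqref{eq:SSt3}, \eqref{eq:SSt5}, and the definition of the adjoint for the first assertion, and combine \eqref{eq:SSt1} with \eqref{eq:SSt2} and \eqref{eq:SSt4} for the second. The paper merely states the second assertion as a consequence of these three relations, whereas you spell out explicitly why the support conditions force $z_u$ and $z_v$ to lie in the relevant cones (using that the index sets partition $\Omega$ up to a null set for feasible $(\bar u,\bar v)$); this extra detail is correct and welcome.
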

\begin{proof}
    Due to \cref{thm:S_stationarity}, there exist $\mu,\nu\in H^1(\Omega)^\star$ satisfying \eqref{eq:SSt}. 
    Testing \eqref{eq:SSt1} with $(\bar u,\bar v)$ while exploiting \eqref{eq:SSt3}, \eqref{eq:SSt5}, 
    and the definition of the adjoint operator, 
    the first statement of the corollary follows. 

    The second statement is a consequence of \eqref{eq:SSt1}, \eqref{eq:SSt2}, and \eqref{eq:SSt4}.
\end{proof}

\begin{remark}\label{rem:MPCC_S_Stationarity}
    According to standard terminology for MPCCs, the necessary optimality conditions \eqref{eq:SSt} are of strong stationarity-type, 
    see, e.g., \cite[Definition~5.1]{Wachsmuth2015} and \cite[Definition~2.7]{Ye2005}. 
    Recall that a feasible point $(\bar u,\bar v)\in  H^1(\Omega)^2$ of \eqref{eq:OCCC_reduced} and thus
    of \eqref{eq:OCCC_as_MPCC} is a strongly stationary point of \eqref{eq:OCCC_as_MPCC} 
    in the sense of \cite[Definition~5.1]{Wachsmuth2015} if and only if there are multipliers $(\mu,\nu)\in L^2(\Omega)^2$ satisfying
    \begin{subequations}\label{eq:classical_S_stationarity}
        \begin{align}
            \label{eq:classical_S_stationarity_1}
            &0=\opS ^\star\bigl[\opS [\bar u,\bar v]-y_\textup d\bigr]+J'(\bar u,\bar v)+(\opE,\opE)^\star[\mu,\nu],\\
            \label{eq:classical_S_stationarity_2}
            &\mu=0\quad \text{a.e. on }I^{+0}(\bar u,\bar v),\\
            \label{eq:classical_S_stationarity_3}
            &\nu=0\quad \text{a.e. on }I^{0+}(\bar u,\bar v),\\
            \label{eq:classical_S_stationarity_4}
            &\mu\leq 0\,\land\,\nu\leq 0\quad \text{a.e. on }I^{00}(\bar u,\bar v),
        \end{align}
    \end{subequations}
    see also \cite[Definition~4.1]{MehlitzWachsmuth2016b}.
    If $\C$ is replaced by $\widetilde \C$ and $\varepsilon =0$ is taken in the definition of $J$
    (in which case $\opE$ is the identity mapping), 
    the systems \eqref{eq:SSt} and \eqref{eq:classical_S_stationarity} are equivalent. 
    However, for $\C$ and $\varepsilon >0$, the necessary optimality conditions \eqref{eq:SSt} are weaker than \eqref{eq:classical_S_stationarity}, 
    which can be seen as follows:
    It is clear that whenever $(\tilde\mu,\tilde\nu)\in L^2(\Omega)^2$ 
    satisfy the classical strong stationarity conditions \eqref{eq:classical_S_stationarity}, 
    then the multipliers $\mu:=\opE^\star[\tilde\mu]$ and $\nu:=\opE^\star[\tilde\nu]$ satisfy \eqref{eq:SSt}. 
    On the other hand, by means of \cref{thm:S_stationarity}, the multipliers appearing in the system \eqref{eq:SSt} 
    may come from $H^1(\Omega)^\star\setminus L^2(\Omega)$ in general. 
\end{remark}

\begin{remark}\label{rem:applicability}
    In this section, only the property of $\opS$ to be a bounded, linear operator has been exploited. 
    Thus, the optimality conditions obtained in \cref{thm:S_stationarity} and \cref{cor:consequences_of_S_Stationarity}
    are applicable in many different situations, e.g., in case where $\opS$ is the control-to-observation operator associated
    with a linear elliptic equation where $u$ and $v$ only operate on some subdomain,
    or for a linear parabolic equation where the controls $u$ and $v$ only depend on time. The latter problems are closely related
    to the switching-constrained problems examined in \cite{ClasonItoKunisch2016,ClasonRundKunischBarnard2016,ClasonRundKunisch2017}.

    It should be noted that similar necessary optimality conditions can be derived if $\opS\colon H^1(\Omega)^2\to\mathcal D$
    is Fr\'{e}chet differentiable but not necessarily linear.
\end{remark}

\section{Penalization of complementarity constraints}\label{sec:num_methods}

In order to find optimal solutions of \eqref{eq:OCCC}, an obvious idea would be to penalize the violation of the equilibrium condition
\begin{equation}\label{eq:equilibrium_condition}
    u(x)v(x)\,=\,0\quad\text{a.e. on }\Omega
\end{equation}
in \eqref{eq:OCCC}. 
This is related to the approaches used in 
\cite{ClasonItoKunisch2016,ClasonRundKunischBarnard2016,ClasonRundKunisch2017}
for the treatment of switching-constrained optimal control problems. 
However, the resulting penalized problem would still involve inequality constraints for the controls in $H^1(\Omega)$, 
and thus the associated KKT conditions would involve Lagrange multipliers 
from $H^1(\Omega)^\star\cap\mathcal M_-(\overline\Omega)$, 
see \cite[Section~5]{DengMehlitzPruefert2018b} for details. 
This, however, may provoke theoretical and numerical difficulties that should be avoided here.

To get around these issues, 
the penalization of the overall complementarity constraint using the Fischer--Burmeister function 
is proposed here, which leads to penalized problems in which the only constraint is the state equation.

\subsection{Penalty term}\label{sec:penalty_terms}

Let $\phi:\R^2\to\R$ denote the Fischer--Burmeister function introduced in \eqref{eq:FB_function} and let 
the mapping $\Phi\colon L^2(\Omega)^2\to L^2(\Omega)$ 
be the associated Nemytskii operator defined by
\[
    \forall (w,z)\in L^2(\Omega)^2\,\forall x\in\Omega\colon\quad \Phi(w,z)(x):=\phi(w(x),z(x)).
\]
This operator is well-defined since for all $w,z\in L^2(\Omega)$, one has
\[
    \begin{aligned}
        \norm{\Phi(w,z)}{L^2(\Omega)}
    &\leq \left(\int_\Omega\bigr(w^2(x)+z^2(x)\bigr)\mathrm d x\right)^{1/2}+\norm{w}{L^2(\Omega)}+\norm{z}{L^2(\Omega)}\\
&\leq \left(\int_\Omega\bigr(|w(x)|+|z(x)|\bigr)^2\mathrm dx\right)^{1/2}+\norm{w}{L^2(\Omega)}+\norm{z}{L^2(\Omega)}\\
&\leq 2\left(\norm{w}{L^2(\Omega)}+\norm{z}{L^2(\Omega)}\right)<+\infty,
    \end{aligned}
\]
i.e., $\Phi$ maps from $L^2(\Omega)^2$ to $L^2(\Omega)$, see also \cite[Section~3.3]{Ulbrich2011}.
For a detailed introduction to the theory of superposition operators in Lebesgue spaces, the
interested reader is referred to \cite{AppellZabrejko1990,GoldbergKampowskyTroeltzsch1992}.

The violation of the complementarity constraint $(u,v)\in\C$ can then be penalized using the functional $F\colon H^1(\Omega)^2\to\R^+_0$ 
defined by
\begin{equation}\label{eq:penalty}
    \forall (u,v)\in H^1(\Omega)^2\colon\quad   
    F(u,v):=\tfrac{1}{2}\int_\Omega\phi^2(u(x),v(x))\mathrm dx=\tfrac{1}{2}\norm{\Phi(\opE[u],\opE[v])}{L^2(\Omega)}^2.
\end{equation}
Recall that $\opE\in\linop{H^1(\Omega)}{L^2(\Omega)}$ represents the natural embedding $H^1(\Omega)\hookrightarrow L^2(\Omega)$.

It is obvious that $\Phi$ cannot be Fr\'{e}chet differentiable since $\phi$ is not smooth. 
In contrast, $F$ is a continuously Fr\'{e}chet differentiable mapping.
\begin{lemma}\label{lem:subdifferential_L2_penalization}
    Let $(\bar u,\bar v)\in H^1(\Omega)^2$ be arbitrarily chosen. 
    Then, $F$ is continuously Fr\'{e}chet differentiable at $(\bar u,\bar v)$. 
    The associated Fr\'{e}chet derivative is given by
    \[
        \forall(\delta^u,\delta^v)\in H^1(\Omega)^2\colon
        \quad
        F'(\bar u,\bar v)[\delta^u,\delta^v]
        =
        \int_\Omega\phi(\bar u(x),\bar v(x))\bigl(\eta_{\bar u}(x)\delta^u(x)+\eta_{\bar v}(x)\delta^v(x)\bigr)\mathrm dx,
    \]
    where $\eta_{\bar u},\eta_{\bar v}\in L^\infty(\Omega)$ are defined by
    \begin{subequations}\label{eq:char_subdiff_8}
        \begin{align}
            \forall x\in\Omega\colon\quad     \eta_{\bar u}(x)                                        & =
            \begin{cases}
                \tfrac{\bar u(x)}{\sqrt{\bar u(x)^2+\bar v(x)^2}}-1 & \text{if }x\notin I^{00}(\bar u,\bar v),\\
                0                                                   & \text{if }x\in I^{00}(\bar u,\bar v),
            \end{cases}\\
            \forall x\in\Omega\colon\quad \eta_{\bar v}(x)                                        & =
            \begin{cases}
                \tfrac{\bar v(x)}{\sqrt{\bar u(x)^2+\bar v(x)^2}}-1 & \text{if }x\notin I^{00}(\bar u,\bar v),\\
                0                                                   & \text{if }x\in I^{00}(\bar u,\bar v),
            \end{cases}
        \end{align}
    \end{subequations}
    and $I^{00}(\bar u,\bar v)$ is defined by \eqref{eq:I00}.
\end{lemma}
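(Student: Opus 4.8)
The plan is to reduce the claim to the standard differentiability theory for integral functionals over Lebesgue spaces, applied not to the nonsmooth Nemytskii operator $\Phi$ itself but to the $C^1$-function obtained by \emph{squaring} the Fischer--Burmeister function, and then to invoke the chain rule with the bounded linear embedding $\opE$.

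First I would study the real function $g\colon\R^2\to\R$, $g:=\phi^2$, i.e., $g(a,b):=\phi(a,b)^2$. Away from the origin $\phi$ is smooth, and a direct computation yields $\nabla g(a,b)=2\phi(a,b)\bigl(\tfrac{a}{\sqrt{a^2+b^2}}-1,\tfrac{b}{\sqrt{a^2+b^2}}-1\bigr)$ for $(a,b)\neq(0,0)$. The point requiring care is that $g$ is, nevertheless, continuously differentiable on all of $\R^2$ with $\nabla g(0,0)=(0,0)$: although $\phi$ is merely Lipschitz (and not differentiable) at the origin, one has $|\phi(a,b)|\leq(1+\sqrt2)\sqrt{a^2+b^2}$, hence $|g(a,b)|\leq(1+\sqrt2)^2(a^2+b^2)$, so that $g$ is differentiable at the origin with vanishing derivative; moreover the factors $\tfrac{a}{\sqrt{a^2+b^2}}-1$ and $\tfrac{b}{\sqrt{a^2+b^2}}-1$ remain bounded by $2$ in modulus, which forces $\nabla g(a,b)\to(0,0)$ as $(a,b)\to(0,0)$, so $\nabla g$ extends continuously across the origin. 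Along the way I record the growth bounds $|g(s)|\leq C|s|^2$ and $|\nabla g(s)|\leq C|s|$ on $\R^2$; these follow at once from the positive homogeneity of $\phi$, $g$, and $\nabla g$ of degrees $1$, $2$, and $1$, together with boundedness of $g$ and $\nabla g$ on the unit circle.

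Second, I would consider the integral functional $\widehat F\colon L^2(\Omega)^2\to\R$ defined by $\widehat F(w,z):=\tfrac12\int_\Omega g\bigl(w(x),z(x)\bigr)\,\mathrm dx$. Since $g$ is continuous (hence a Carath\'{e}odory function) and obeys the growth conditions just recorded, the standard theory of superposition operators and integral functionals in Lebesgue spaces (see \cite{AppellZabrejko1990,GoldbergKampowskyTroeltzsch1992} and \cite[Section~3.3]{Ulbrich2011}) yields that $\widehat F$ is continuously Fr\'{e}chet differentiable with
\[
    \widehat F'(w,z)[\eta_1,\eta_2]
    =\tfrac12\int_\Omega \nabla g\bigl(w(x),z(x)\bigr)\cdot\bigl(\eta_1(x),\eta_2(x)\bigr)\,\mathrm dx
    =\int_\Omega \phi\bigl(w(x),z(x)\bigr)\bigl(\eta_w(x)\eta_1(x)+\eta_z(x)\eta_2(x)\bigr)\,\mathrm dx,
\]
where $\eta_w,\eta_z\in L^\infty(\Omega)$ are built from $w,z$ exactly as $\eta_{\bar u},\eta_{\bar v}$ are built from $\bar u,\bar v$ in \eqref{eq:char_subdiff_8}; the convention $\nabla g=(0,0)$ on $\{w=z=0\}$ matches the value of $\nabla g$ at the origin established above. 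Here the linear growth $|\nabla g(s)|\leq C|s|$ is precisely what guarantees that $\nabla g$ induces a bounded and continuous superposition operator $L^2(\Omega)^2\to L^2(\Omega)^2$, which makes $\widehat F'$ well-defined and norm-continuous (Gâteaux differentiability follows from the mean value theorem and dominated convergence under the same bound, and continuity of the Gâteaux derivative upgrades this to continuous Fr\'{e}chet differentiability).

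Finally, since $F=\widehat F\circ(\opE,\opE)$ with $(\opE,\opE)\in\linop{H^1(\Omega)^2}{L^2(\Omega)^2}$ bounded and linear, the chain rule shows that $F$ is continuously Fr\'{e}chet differentiable at every $(\bar u,\bar v)\in H^1(\Omega)^2$ with $F'(\bar u,\bar v)=\widehat F'(\opE[\bar u],\opE[\bar v])\circ(\opE,\opE)$; evaluating this identity on $(\delta^u,\delta^v)$ and using that $\eta_{\bar u},\eta_{\bar v}$ are bounded by $2$ in modulus (hence lie in $L^\infty(\Omega)$) gives exactly the asserted formula. The main obstacle is the first step: verifying that squaring the nonsmooth function $\phi$ restores continuous differentiability across the origin and identifying the correct (linear) growth of $\nabla g$; once this is in place, the remainder is a routine combination of known results on integral functionals with the chain rule.
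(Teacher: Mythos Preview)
Your proposal is correct and follows essentially the same route as the paper: verify that the squared Fischer--Burmeister function is $C^1$ on all of $\R^2$ (including the origin), invoke the standard differentiability theory for the associated Nemytskii/integral functional on $L^2(\Omega)^2$ via the growth conditions, and finish with the chain rule through the embedding $(\opE,\opE)$. The only cosmetic difference is that the paper factors the integral functional further as a linear integration operator composed with the Nemytskii map $L^2(\Omega)^2\to L^1(\Omega)$, whereas you treat the integral functional $\widehat F$ directly; your argument is also more explicit about why $g$ remains $C^1$ at the origin, which the paper simply asserts.
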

\begin{proof}
    Let $f\colon\R^2\to\R$ be given by
    \[
        \forall (a,b)\in\R^2\colon\quad f(a,b):=\tfrac{1}{2}\phi(a,b)^2.
    \]
    One can check that $f$ is continuously differentiable with gradient
    \[
        \forall (a,b)\in\R^2\colon\quad 
        \nabla f(a,b)=
        \begin{cases}
            \phi(a,b)\begin{pmatrix}\tfrac{a}{\sqrt{a^2+b^2}}-1\\\tfrac{b}{\sqrt{a^2+b^2}}-1\end{pmatrix}       &       \text{if }(a,b)\neq(0,0),\\
            \begin{pmatrix}0\\0\end{pmatrix}                                                                                                                            &       \text{if }(a,b)=(0,0).
        \end{cases}
    \]
    Clearly, the Nemytskii-operator $\mathcal F$ associated with $f$ maps from $L^2(\Omega)^2$ to $L^1(\Omega)$,
    since $\Phi$ maps $L^2(\Omega)^2$ to $L^2(\Omega)$.
    Noting that $a/\sqrt{a^2+b^2}\in[-1,1]$ and $b/\sqrt{a^2+b^2}\in[-1,1]$ hold for all $(a,b)\in\R^2\setminus\{(0,0)\}$, 
    the Nemytskii operator associated with $\nabla f$ maps from $L^2(\Omega)^2$ to $L^2(\Omega)^2$. 
    Applying \cite[Theorems~4 and 7]{GoldbergKampowskyTroeltzsch1992}, 
    $\mathcal F\colon L^2(\Omega)^2\to L^1(\Omega)$ is continuously Fr\'{e}chet differentiable. Furthermore,
    \[
        \forall x\in\Omega\colon
        \quad 
        \mathcal F'(w,z)[\delta_w,\delta_z](x)
        =
        \nabla_a f(w(x),z(x))\delta_w(x)+\nabla_bf(w(x),z(x))\delta_w(x)
    \]
    for any $(w,z),(\delta_w,\delta_z)\in L^2(\Omega)^2$.

    Define $\opL\in\linop{L^1(\Omega)}{\R}$ by $\opL[w]:=\int_\Omega w(x)\mathrm dx$.
    Then, $F=\opL\circ\mathcal F\circ(\opE,\opE)$. 
    Since all involved mappings are continuously Fr\'{e}chet differentiable, 
    the assertion of the lemma follows by exploiting the chain rule for Fr\'{e}chet differentiable functions, 
    see \cite[Theorem~2.20]{Troeltzsch2009}.
\end{proof}

\begin{remark}\label{rem:other_penalty_terms}
    As the penalty functional $F$ is smooth, 
    it cannot lead to exact penalization of the complementarity constraints,
    see, e.g., \cite[Theorem~5.9]{GeigerKanzow2002}. 
    Although \cref{sec:numerical_examples} demonstrates that a penalty method using $F$ 
    behaves well in numerical practice, in principle any other NCP-function, see \cite{SunQi1999} for an overview, 
    can be used to construct similar penalty methods.

    One possible alternative would be to use $F_1\colon H^1(\Omega)^2\to\R^+_0$ given by
    \[
        \forall (u,v)\in H^1(\Omega)^2\colon
        \quad 
        F_1(u,v):=\int_\Omega|\phi(u(x),v(x))|\mathrm dx
        =\norm{\tilde\Phi(\opE[u],\opE[v])}{L^1(\Omega)},
    \]
    where $\tilde\Phi\colon L^2(\Omega)^2\to L^1(\Omega)$ is the mapping $\opE_{L^2\to L^1}\circ\Phi$
    where $\opE_{L^2\to L^1}$ represents the continuous embedding $L^2(\Omega)\hookrightarrow L^1(\Omega)$. 
    This leads to a nonsmooth but Lipschitz continuous mapping. 

    Another approach would be to exploit the so-called \emph{smoothed} Fischer--Burmeister function
    $\phi_\theta\colon\R^2\to\R$ given by
    \[
        \forall (a,b)\in\R^2\colon
        \quad
        \phi_\theta(a,b):=\sqrt{a^2+b^2+2\theta}-a-b,
    \]
    which is continuously differentiable for any $\theta>0$, see \cite{Kanzow1996}.
    Using \cite[Theorems~4 and~7]{GoldbergKampowskyTroeltzsch1992}, one can check that the 
    associated Nemytskii operator $\tilde \Phi_\theta\colon L^2(\Omega)^2\to L^1(\Omega)$ is 
    continuously Fr\'{e}chet differentiable. 
    Define $F_{1,\theta}\colon H^1(\Omega)^2\to\R^+_0$ by means of
    \[
        \forall (u,v)\in H^1(\Omega)^2\colon
        \quad 
        F_{1,\theta}(u,v):=\int_\Omega|\phi_\theta(u(x),v(x))|\mathrm dx
        =\norm{\tilde \Phi_\theta(\opE[u],\opE[v])}{L^1(\Omega)}.
    \]
    Clearly, $F_{1,0}$ corresponds to $F_1$. For $\theta>0$ this approach
    can be seen as a mixture of a penalty and a smoothing method. 
    However, it needs to be noted that $F_{1,\theta}$
    is nonsmooth even for positive values of $\theta$.
\end{remark}

\subsection{Existence, convergence results, and optimality conditions}\label{sec:convergence_results}

Using the penalty functional $F$ defined in \eqref{eq:penalty} to penalize 
the complementarity constraints in \eqref{eq:OCCC_reduced} leads to the family of penalized problems
\begin{equation}\label{eq:penalized_problem}\tag{P$_k$}
    \tfrac{1}{2}\norm{\opS [u,v]-y_\text{d}}{\mathcal D}^2+J(u,v)+\sigma_k F(u,v)\,\rightarrow\,\min_{u,v},
\end{equation}
where $\{\sigma_k\}_{k\in\N}\subset\R^+$ is a sequence of positive real numbers tending to infinity as $k\to\infty$.
The first question is about the existence of solutions of \eqref{eq:penalized_problem}.
\begin{proposition}\label{prop:existence_P_sigma}
    For any $\sigma_k>0$, the penalized problem \eqref{eq:penalized_problem} possesses an optimal solution.
\end{proposition}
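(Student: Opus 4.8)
The plan is to apply Tonelli's direct method once more, just as in the proof of \cref{cor:existence}; the only new ingredient is the treatment of the penalty term $F$, which is neither convex nor obviously weakly sequentially lower semicontinuous, and for which the compactness of the embedding $H^1(\Omega)\hookrightarrow L^2(\Omega)$ must be exploited.

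First I would note that the objective functional of \eqref{eq:penalized_problem} is bounded from below by zero, since each of its three summands is nonnegative (here $\sigma_k>0$, $\alpha_1,\alpha_2\geq 0$, and $\varepsilon>0$ enter), so its infimum over $H^1(\Omega)^2$ is a finite number $m_k\in\R$. Picking a minimizing sequence $\{(u_j,v_j)\}_{j\in\N}\subset H^1(\Omega)^2$ and using $\varepsilon>0$, one has
\[
    \tfrac12\norm{\opS[u_j,v_j]-y_\text{d}}{\mathcal D}^2+J(u_j,v_j)+\sigma_k F(u_j,v_j)
    \,\geq\,
    \tfrac{\varepsilon}{2}\left(\norm{u_j}{H^1(\Omega)}^2+\norm{v_j}{H^1(\Omega)}^2\right),
\]
so $\{(u_j,v_j)\}_{j\in\N}$ is bounded in the Hilbert space $H^1(\Omega)^2$ and, along a subsequence (not relabeled), converges weakly to some $(\bar u,\bar v)\in H^1(\Omega)^2$.

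Next I would check weak sequential lower semicontinuity of the objective along this subsequence. The map $(u,v)\mapsto \tfrac12\norm{\opS[u,v]-y_\text{d}}{\mathcal D}^2+J(u,v)$ is convex and continuous --- because $\opS$ is a bounded linear operator and $J$ is a continuous quadratic form --- hence weakly sequentially lower semicontinuous. For the penalty term I would use the compactness of the embedding $\opE\colon H^1(\Omega)\hookrightarrow L^2(\Omega)$: from $u_j\weakly\bar u$ and $v_j\weakly\bar v$ in $H^1(\Omega)$ one obtains $\opE[u_j]\to\opE[\bar u]$ and $\opE[v_j]\to\opE[\bar v]$ strongly in $L^2(\Omega)$. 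Since the Fischer--Burmeister function $\phi$ is globally Lipschitz on $\R^2$ (its gradient being bounded away from the origin, with the origin handled by continuity), its Nemytskii operator $\Phi\colon L^2(\Omega)^2\to L^2(\Omega)$ is Lipschitz, hence continuous; consequently $\Phi(\opE[u_j],\opE[v_j])\to\Phi(\opE[\bar u],\opE[\bar v])$ strongly in $L^2(\Omega)$, which yields $F(u_j,v_j)\to F(\bar u,\bar v)$. Thus the whole objective is weakly sequentially lower semicontinuous along the chosen subsequence, so
\[
    \tfrac12\norm{\opS[\bar u,\bar v]-y_\text{d}}{\mathcal D}^2+J(\bar u,\bar v)+\sigma_k F(\bar u,\bar v)\,\leq\, m_k,
\]
and $(\bar u,\bar v)$ is an optimal solution of \eqref{eq:penalized_problem}.

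The only point that requires any care is the weak continuity of $F$, which rests entirely on the compactness of $H^1(\Omega)\hookrightarrow L^2(\Omega)$ (guaranteed by the cone condition in \cref{ass:OCass}) together with the continuity of $\Phi$ as a superposition operator on $L^2(\Omega)$; everything else is the routine coercivity and weak-lower-semicontinuity bookkeeping of the direct method, and I do not anticipate any genuine obstacle.
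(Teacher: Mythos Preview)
Your proof is correct and follows essentially the same route as the paper: a minimizing sequence, coercivity from $\varepsilon>0$ to get boundedness and a weakly convergent subsequence, weak lower semicontinuity of the convex tracking-plus-$J$ part, and continuity of $F$ obtained from the compact embedding $H^1(\Omega)\hookrightarrow L^2(\Omega)$ together with continuity of the Nemytskii operator $\Phi$. The only cosmetic difference is that you justify the continuity of $\Phi$ via the global Lipschitz property of $\phi$, whereas the paper invokes a standard result on superposition operators; both lead to the same conclusion.
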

\begin{proof}
    Let $\{(u_l,v_l)\}_{l\in\N}\subset H^1(\Omega)^2$ be a minimizing sequence for \eqref{eq:penalized_problem} 
    and let $\bar m\in\overline{\R}$ be the corresponding infimal value. 
    Since $J$ is, due to $\varepsilon>0$, coercive and bounded from below, this sequence is bounded in $H^1(\Omega)^2$ and, thus, 
    possesses a weakly convergent subsequence (without relabeling) with weak limit $(\bar u,\bar v)\in H^1(\Omega)^2$. 
    Due to the compactness of $H^1(\Omega)\hookrightarrow L^2(\Omega)$, the strong convergences $u_l\to\bar u$ and $v_l\to\bar v$ hold in $L^2(\Omega)$. 
    Noting that the operator $\Phi$ is continuous on $L^2(\Omega)^2$, see \cite[Theorem~4]{GoldbergKampowskyTroeltzsch1992}, it follows that
    \[
        \lim_{l\rightarrow\infty}F(u_l,v_l)=F(\bar u,\bar v).
    \]
    Thus, the continuity of $\opS $ and the weak lower semicontinuity of norms imply that
    \begin{multline*}
        \tfrac{1}{2}\norm{\opS [\bar u,\bar v]-y_\text{d}}{\mathcal D}^2+J(\bar u,\bar v)+\sigma_kF(\bar u,\bar v)\\
        \begin{aligned}
            &\leq \liminf_{l\rightarrow\infty}\left(\tfrac{1}{2}\norm{\opS [u_l,v_l]-y_\text{d}}{\mathcal D}^2+J(u_l,v_l)\right)
            +\sigma_k\lim_{l\rightarrow\infty}F(u_l,v_l)\\
            &=\liminf_{l\rightarrow\infty}\left(\tfrac{1}{2}\norm{\opS [u_l,v_l]-y_\text{d}}{\mathcal D}^2+J(u_l,v_l) +\sigma_kF(u_l,v_l)\right)=\bar m,
        \end{aligned}
    \end{multline*}
    i.e., $(\bar u,\bar v)$ is a global minimizer of \eqref{eq:penalized_problem}. 
\end{proof}

Next, the convergence of solutions of \eqref{eq:penalized_problem} as $\sigma_k\to\infty$ is addressed.
\begin{proposition}\label{prop:strong_convergence_of_surrogate_solutions}
    Fix a sequence $\{\sigma_k\}_{k\in\N}\subset\R^+$ tending to infinity as $k\to\infty$. 
    For any $k\in\N$, let $(u_k,v_k)\in H^1(\Omega)^2$ be a global minimizer of \eqref{eq:penalized_problem}. 
    Then, $\{(u_k,v_k)\}_{k\in\N}$ contains a subsequence converging strongly in $H^1(\Omega)^2$ 
    to a point $(\bar u,\bar v)\in\C$ such that $(\bar y,\bar u,\bar v)$, 
    where $\bar y\in\mathcal Y$ is the state associated with $(\bar u,\bar v)$, 
    is an optimal solution of \eqref{eq:OCCC}.

    Moreover, any subsequence of $\{(u_k,v_k)\}_{k\in\N}$ converging weakly to some 
    $(\bar u,\bar v)$ in $ H^1(\Omega)^2$ 
    produces a global minimizer of \eqref{eq:OCCC} in the above sense.
\end{proposition}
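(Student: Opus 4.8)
The plan is the classical penalty-method argument: extract a weak limit by coercivity, upgrade it to strong $L^2$-convergence via the compact embedding $H^1(\Omega)\hookrightarrow L^2(\Omega)$, identify the limit as feasible and optimal via weak lower semicontinuity, and then bootstrap strong $H^1$-convergence using the Radon--Riesz property of the Hilbert space $H^1(\Omega)$. First, since $(0,0)\in\C$ with $\opS[0,0]=0$ and $J(0,0)=F(0,0)=0$, comparing the minimizer $(u_k,v_k)$ of \eqref{eq:penalized_problem} (which exists by \cref{prop:existence_P_sigma}) with the admissible point $(0,0)$ yields
\[
    \tfrac12\norm{\opS[u_k,v_k]-y_\text d}{\mathcal D}^2+J(u_k,v_k)+\sigma_kF(u_k,v_k)\le\tfrac12\norm{y_\text d}{\mathcal D}^2
\]
for all $k\in\N$. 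All three summands being nonnegative, the coercivity of $J$ (this is the only place where $\varepsilon>0$ is used) shows that $\{(u_k,v_k)\}_{k\in\N}$ is bounded in $H^1(\Omega)^2$, while $0\le F(u_k,v_k)\le\norm{y_\text d}{\mathcal D}^2/(2\sigma_k)\to0$.

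Next, consider any subsequence converging weakly in $H^1(\Omega)^2$ to some $(\bar u,\bar v)$ --- one extracted from the bounded sequence for the first assertion, an arbitrary weakly convergent one for the ``moreover'' part. By compactness of $H^1(\Omega)\hookrightarrow L^2(\Omega)$, $u_k\to\bar u$ and $v_k\to\bar v$ strongly in $L^2(\Omega)$, and continuity of the Nemytskii operator $\Phi$ on $L^2(\Omega)^2$ gives $F(u_k,v_k)\to F(\bar u,\bar v)$, so $F(\bar u,\bar v)=0$ and hence $(\bar u,\bar v)\in\C$. Let $m^\star$ denote the optimal value of \eqref{eq:OCCC_reduced}, which is attained by \cref{cor:existence}. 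Since every element of $\C$ is admissible for \eqref{eq:penalized_problem} with unchanged objective value, the infimum in \eqref{eq:penalized_problem} is at most $m^\star$; discarding the nonnegative penalty term and using weak lower semicontinuity of $\tfrac12\norm{\opS[\cdot]-y_\text d}{\mathcal D}^2$ (note $\opS[u_k,v_k]\weakly\opS[\bar u,\bar v]$ because $\opS$ is bounded and linear) and of the convex continuous functional $J$ yields
\[
    \tfrac12\norm{\opS[\bar u,\bar v]-y_\text d}{\mathcal D}^2+J(\bar u,\bar v)\le\liminf_{k\to\infty}\left(\tfrac12\norm{\opS[u_k,v_k]-y_\text d}{\mathcal D}^2+J(u_k,v_k)\right)\le m^\star.
\]
As $(\bar u,\bar v)\in\C$ is admissible for \eqref{eq:OCCC_reduced}, the left-hand side is also bounded below by $m^\star$; hence equality holds throughout, $(\bar u,\bar v)$ is a global minimizer of \eqref{eq:OCCC_reduced}, and $(\bar y,\bar u,\bar v)$ solves \eqref{eq:OCCC}.

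Finally, for the strong convergence: the displayed chain of inequalities, combined with a routine subsequence argument exploiting the nonnegativity and the weak lower semicontinuity of $\tfrac12\norm{\opS[\cdot]-y_\text d}{\mathcal D}^2$ and $J$ \emph{individually}, forces both $\tfrac12\norm{\opS[u_k,v_k]-y_\text d}{\mathcal D}^2\to\tfrac12\norm{\opS[\bar u,\bar v]-y_\text d}{\mathcal D}^2$ and $J(u_k,v_k)\to J(\bar u,\bar v)$. Since the $L^2$-terms in $J$ converge by the strong $L^2$-convergence of $u_k,v_k$, this gives $\norm{u_k}{H^1(\Omega)}^2+\norm{v_k}{H^1(\Omega)}^2\to\norm{\bar u}{H^1(\Omega)}^2+\norm{\bar v}{H^1(\Omega)}^2$, and applying weak lower semicontinuity of $\norm{\cdot}{H^1(\Omega)}$ to each summand together with the same subsequence argument once more yields $\norm{u_k}{H^1(\Omega)}\to\norm{\bar u}{H^1(\Omega)}$ and $\norm{v_k}{H^1(\Omega)}\to\norm{\bar v}{H^1(\Omega)}$. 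In the Hilbert space $H^1(\Omega)$, weak convergence together with convergence of norms implies strong convergence, so $(u_k,v_k)\to(\bar u,\bar v)$ in $H^1(\Omega)^2$; the ``moreover'' claim follows because everything from the second paragraph on used only weak convergence of the subsequence. The one genuinely delicate point is this last bookkeeping --- passing from convergence of the \emph{sum} of the objective terms to convergence of each summand, and in particular isolating $\norm{u_k}{H^1(\Omega)}\to\norm{\bar u}{H^1(\Omega)}$ --- which is precisely what makes the Radon--Riesz argument applicable; everything else is standard.
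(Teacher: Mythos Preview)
Your proof is correct and follows essentially the same route as the paper: bound via the feasible point $(0,0)$, extract a weak limit by coercivity, upgrade to strong $L^2$-convergence via compactness, identify feasibility and optimality by weak lower semicontinuity, and then bootstrap strong $H^1$-convergence by combining convergence of the objective with the Radon--Riesz property. The paper formalizes your ``routine subsequence argument'' as a separate lemma (\cref{lem:sum_of_real_sequences}); one small slip in your commentary is the parenthetical that coercivity is the \emph{only} place $\varepsilon>0$ enters --- you also need $\varepsilon>0$ later to isolate $\norm{u_k}{H^1(\Omega)}^2+\norm{v_k}{H^1(\Omega)}^2$ from $J(u_k,v_k)\to J(\bar u,\bar v)$.
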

\begin{proof}
    For any $k\in\N$, the estimate
    \[
        \tfrac{1}{2}\norm{\opS [u_k,v_k]-y_\text{d}}{\mathcal D}^2+J(u_k,v_k)
        +\sigma_kF(u_k,v_k)\leq \tfrac{1}{2}\norm{y_\text{d}}{\mathcal D}^2
    \]
    follows from the feasibility of $(0,0)\in H^1(\Omega)^2$ for \eqref{eq:penalized_problem}. 
    Thus, since $J$ is coercive and bounded from below while $F$ only takes nonnegative values, 
    $\{(u_k,v_k)\}_{k\in\N}$ is bounded and therefore 
    contains a weakly convergent subsequence (which, as all further subsequences, will not be relabeled). 
    Recalling the compactness of $H^1(\Omega)\hookrightarrow L^2(\Omega)$, 
    the sequence $\{(u_k,v_k)\}_{k\in\N}$ converges strongly to $(\bar u,\bar v)$ 
    in $L^2(\Omega)^2$ and thus pointwise almost everywhere at least along a subsequence. 
    Furthermore, the relation
    \[
        0\leq \norm{\Phi(\opE[u_k],\opE[v_k])}{L^2(\Omega)}\leq \sqrt{\tfrac{1}{\sigma_k}}\norm{y_\text{d}}{\mathcal D}\to 0
    \]
    is obtained 
    as $k\rightarrow\infty$. Consequently, at least along a subsequence, $\{\Phi(\opE[u_k],\opE[v_k])\}_{k\in\N}$ converges pointwise a.e. to $0$. 
    By definition of $\Phi$, $(\bar u,\bar v)\in\C$ follows.

    Now choose $(u,v)\in\C$ arbitrarily. Since this point is feasible to \eqref{eq:penalized_problem}, it follows for any $k\in\N$ that
    \[
        \begin{aligned}
            \tfrac{1}{2}\norm{\opS [u,v]-y_\text{d}}{\mathcal D}^2+J(u,v)
            & \geq \tfrac{1}{2}\norm{\opS [u_k,v_k]-y_\text{d}}{\mathcal D}^2+J(u_k,v_k)+\sigma_kF(u_k,v_k)\\
            &\geq \tfrac{1}{2}\norm{\opS [u_k,v_k]-y_\text{d}}{\mathcal D}^2+J(u_k,v_k).
        \end{aligned}
    \]
    Thus, using the weak lower semicontinuity of the functionals, one obtains
    \begin{equation*}
        \begin{aligned}
            \tfrac{1}{2}\norm{\opS [\bar u,\bar v]-y_\text d}{\mathcal D}^2+J(\bar u,\bar v)
            &\leq\liminf_{k\to\infty}\left(\tfrac{1}{2}\norm{\opS [u_k,v_k]-y_\text d}{\mathcal D}^2+J(u_k,v_k)\right)\\
            &\leq\limsup_{k\to\infty}\left(\tfrac{1}{2}\norm{\opS [u_k,v_k]-y_\text d}{\mathcal D}^2+J(u_k,v_k)\right)\\
            &\leq \limsup_{k\to\infty}\left(\tfrac{1}{2}\norm{\opS [u_k,v_k]-y_\text d}{\mathcal D}^2 +J(u_k,v_k)+\sigma_kF(u_k,v_k)\right)\\
            &\leq\tfrac{1}{2}\norm{\opS [u,v]-y_\text d}{\mathcal D}^2+J(u,v)         
        \end{aligned}
    \end{equation*}
    for all $(u,v)\in\C$. 
    Consequently, $(\bar u,\bar v)$ is a global minimizer of the state-reduced problem \eqref{eq:OCCC_reduced}.
    Choosing $u:=\bar u$ and $v:=\bar v$ in the above estimate, one obtains
    \[
        \tfrac{1}{2}\norm{\opS [u_k,v_k]-y_\text d}{\mathcal D}^2+J(u_k,v_k)
        \to\tfrac{1}{2}\norm{\opS [\bar u,\bar v]-y_\text d}{\mathcal D}^2+J(\bar u,\bar v),
    \]
    and $J(u_k,v_k)\to J(\bar u,\bar v)$ follows by \cref{lem:sum_of_real_sequences}.
    Since $u_k\to\bar u$ and $v_k\to\bar v$ in $L^2(\Omega)$,
    the definition of $J$ and $\varepsilon>0$ imply that
    \[
        \norm{u_k}{H^1(\Omega)}^2+\norm{v_k}{H^1(\Omega)}^2
        \to \norm{\bar u}{H^1(\Omega)}^2+\norm{\bar v}{H^1(\Omega)}^2.
    \]
    Now, applying \cref{lem:sum_of_real_sequences} once more yields
    \[
        \norm{u_k}{H^1(\Omega)}^2\to\norm{\bar u}{H^1(\Omega)}^2,\qquad
        \norm{v_k}{H^1(\Omega)}^2\to\norm{\bar v}{H^1(\Omega)}^2.
    \]
    Combining this with the weak convergences $u_k\weakly\bar u$ and $v_k\weakly\bar v$ in $H^1(\Omega)$, 
    the convergences $u_k\to\bar u$ and $v_k\to\bar v$ in $H^1(\Omega)$ follow since the latter is a Hilbert space. 
    This yields the first assertion.

    If $\{(u_k,v_k)\}_{k\in\N}$ contains a subsequence converging weakly to 
    some $(\bar u,\bar v)\in H^1(\Omega)^2$ in $H^1(\Omega)^2$, then the above arguments can be partially repeated to show that $(\bar u,\bar v)$ is a global minimizer of \eqref{eq:OCCC_reduced}.
    This completes the proof.
\end{proof}

An obvious advantage of \eqref{eq:penalized_problem} is that it is a smooth and unconstrained problem, 
allowing the straightforward derivation of necessary optimality conditions.
Hence, the following result is a direct consequence of Fermat's rule and 
\cref{lem:subdifferential_L2_penalization}.
\begin{proposition}\label{prop:NOC_surrogate_L2_penalization}
    For fixed $\sigma_k>0$, let $(u_k,v_k)\in H^1(\Omega)^2$ be a locally optimal solution of \eqref{eq:penalized_problem}. 
    Then, the corresponding functions $\eta_{u_k},\eta_{v_k}\in L^\infty(\Omega)$ defined as in \eqref{eq:char_subdiff_8} satisfy
    \[
        0=\opS ^\star\bigl[\opS [u_k,v_k]-y_\textup{d}\bigr]
        +J'(u_k,v_k)
        \sigma_k(\opE,\opE)^\star[\Phi(\opE[u_k],\opE[v_k])\eta_{u_k},\Phi(\opE[u_k],\opE[v_k])\eta_{v_k}].
    \]
\end{proposition}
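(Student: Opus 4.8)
The plan is to apply Fermat's rule to the unconstrained problem \eqref{eq:penalized_problem} and then to read off the asserted identity by differentiating each summand of its objective. Write
\[
    G_k(u,v):=\tfrac{1}{2}\norm{\opS[u,v]-y_\textup{d}}{\mathcal D}^2+J(u,v)+\sigma_kF(u,v)
\]
for the objective of \eqref{eq:penalized_problem}. First I would argue that $G_k$ is Fréchet differentiable on $H^1(\Omega)^2$: the first term is the composition of the bounded linear operator $\opS$, a shift by $y_\textup{d}$, and the squared norm on the Hilbert space $\mathcal D$, hence continuously Fréchet differentiable with
\[
    \Bigl(\tfrac{1}{2}\norm{\opS[\cdot]-y_\textup{d}}{\mathcal D}^2\Bigr)'(u_k,v_k)[\delta^u,\delta^v]
    =\scalarprod{\opS[u_k,v_k]-y_\textup{d}}{\opS[\delta^u,\delta^v]}{\mathcal D}
    =\dual{\opS^\star\bigl[\opS[u_k,v_k]-y_\textup{d}\bigr]}{(\delta^u,\delta^v)}{H^1(\Omega)^2},
\]
where $\mathcal D^\star$ has been identified with $\mathcal D$ via Riesz' theorem and the definition of the adjoint $\opS^\star$ has been used; the term $J$ is a continuous quadratic form and hence continuously Fréchet differentiable with derivative $J'(u_k,v_k)$; and $F$ is continuously Fréchet differentiable by \cref{lem:subdifferential_L2_penalization}. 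Consequently $G_k$ is Fréchet differentiable at $(u_k,v_k)$, and since $(u_k,v_k)$ is a local minimizer of the unconstrained problem \eqref{eq:penalized_problem}, Fermat's rule yields $G_k'(u_k,v_k)=0$ in $(H^1(\Omega)^2)^\star$.

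Next I would rewrite the derivative of the penalty term in adjoint form. By \cref{lem:subdifferential_L2_penalization}, for any $(\delta^u,\delta^v)\in H^1(\Omega)^2$,
\[
    F'(u_k,v_k)[\delta^u,\delta^v]
    =\int_\Omega\Phi(\opE[u_k],\opE[v_k])(x)\bigl(\eta_{u_k}(x)\delta^u(x)+\eta_{v_k}(x)\delta^v(x)\bigr)\mathrm dx.
\]
Since $\Phi$ maps $L^2(\Omega)^2$ into $L^2(\Omega)$ while $\eta_{u_k},\eta_{v_k}\in L^\infty(\Omega)$, the products $\Phi(\opE[u_k],\opE[v_k])\eta_{u_k}$ and $\Phi(\opE[u_k],\opE[v_k])\eta_{v_k}$ belong to $L^2(\Omega)$, so the right-hand side equals
\[
    \dual{\Phi(\opE[u_k],\opE[v_k])\eta_{u_k}}{\opE[\delta^u]}{L^2(\Omega)}
    +\dual{\Phi(\opE[u_k],\opE[v_k])\eta_{v_k}}{\opE[\delta^v]}{L^2(\Omega)}
\]
after identifying $L^2(\Omega)^\star$ with $L^2(\Omega)$. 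Using the definition of $\opE^\star$ and collecting both contributions into the product operator $(\opE,\opE)\in\linop{H^1(\Omega)^2}{L^2(\Omega)^2}$, this is precisely
\[
    \dual{(\opE,\opE)^\star\bigl[\Phi(\opE[u_k],\opE[v_k])\eta_{u_k},\Phi(\opE[u_k],\opE[v_k])\eta_{v_k}\bigr]}{(\delta^u,\delta^v)}{H^1(\Omega)^2}.
\]

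Finally I would assemble the pieces: substituting the three derivative representations into $G_k'(u_k,v_k)=0$ and noting that the pairing with $(\delta^u,\delta^v)$ vanishes for every test pair yields the claimed identity in $(H^1(\Omega)^2)^\star$. I do not expect any genuine obstacle here — the proposition is essentially bookkeeping built on \cref{lem:subdifferential_L2_penalization}; the only points requiring a little care are the consistent identifications $\mathcal D^\star\cong\mathcal D$ and $L^2(\Omega)^\star\cong L^2(\Omega)$ via Riesz' theorem together with the deliberate \emph{non}-identification of $H^1(\Omega)^\star$ with $H^1(\Omega)$ (which is why $\opE^\star$ and $\opS^\star$ appear at all), and the observation that $\Phi(\opE[u_k],\opE[v_k])\eta_{u_k}\in L^2(\Omega)$ so that $(\opE,\opE)^\star$ may legitimately be applied to it. I would also insert the missing ``$+$'' in front of $\sigma_k$ in the displayed identity.
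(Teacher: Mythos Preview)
Your proposal is correct and follows exactly the paper's approach: the paper simply states that the result is a direct consequence of Fermat's rule and \cref{lem:subdifferential_L2_penalization}, and you have spelled out precisely these steps in detail, including the adjoint bookkeeping and the observation about the missing ``$+$'' before $\sigma_k$.
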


\begin{remark}
    Similar results as in this section can be shown for the penalty terms induced 
    by the nonsmooth functionals $F_1$ and $F_{1,\theta_k}$ given in \cref{rem:other_penalty_terms} 
    using the continuity of the associated Nemytskii operators $\tilde \Phi$ and $\tilde \Phi_{\theta_k}$ 
    as well as calculus rules for Clarke's generalized derivative, see \cite{Clarke:1990a}.
    Obtaining a convergence result as in \cref{prop:strong_convergence_of_surrogate_solutions} for $F_{1,\theta_k}$ 
    additionally requires to choose $\sigma_k$ and $\theta_k$ such that $\sigma_k\sqrt{\theta_k}\to 0$ as $k\to\infty$.
\end{remark}
\begin{remark}
    Using the boundedness of the solutions and passing to subsequences, it is possible by pointwise inspection 
    to take the limit $k\to\infty$ in the optimality system from \cref{prop:NOC_surrogate_L2_penalization}
    and derive the existence of multipliers $\mu,\nu\in H^1(\Omega)^\star$ which satisfy
    the polarity relations from \cref{thm:S_stationarity} with respect to the index sets $I^{+0}(\bar u,\bar v)$
    and $I^{0+}(\bar u,\bar v)$. This can be seen as a natural extension of the so-called weak stationarity concept,
    see \cite[Definition~4.1]{MehlitzWachsmuth2016b}, to
    \eqref{eq:OCCC_reduced}. However, it does not seem to be possible to infer the polarity relations for
    $\mu$ and $\nu$ on $I^{00}(\bar u,\bar v)$ found in the strong stationarity system from
    \cref{thm:S_stationarity}. Noting that our penalty approach is related to Scholtes' relaxation
    technique for the numerical solution of finite-dimensional MPCCs which yields so-called Clarke-stationary
    points in general, see \cite[Section~3.1]{HoheiselKanzowSchwartz2013} for details, this observation 
    does not seem to be too surprising since Clarke-stationarity is much weaker than strong stationarity.
\end{remark}

\section{Numerical treatment}\label{sec:numerics}

This section deals with the numerical implementation of the penalization technique described in \cref{sec:num_methods} 
following a ``first-discretize-then-optimize approach'' based on a finite element discretization.
In order to concentrate on the complementarity constraint, the state equation is chosen as the elliptic model problem
\begin{equation}\label{eq:PDE}\tag{PDE}
    \left\{
        \begin{aligned}       
            -\nabla\cdot(\mathbf{C}\nabla y)+\mathbf{a}y&\,=\,\mathbf bu+\mathbf cv && \text{a.e. on } \Omega\\
            \vec{\mathbf{n}}\cdot(\mathbf{C}\nabla y)&\,=\,0&&\text{a.e. on }\bd \Omega.       
        \end{aligned}
    \right.
\end{equation}
Here, $\Omega\subset \R^d$ is a domain with Lipschitz boundary $\bd(\Omega)$, 
$\mathbf{C}\in L^\infty(\Omega;S^d(\R))$ satisfies 
the condition of uniform ellipticity \eqref{eq:uniform_ellipticity}, and 
the functions $\mathbf{a}, \mathbf{b}, \mathbf{c}\in L^\infty(\Omega)$
do not vanish while $\mathbf a$ is additionally nonnegative, see also \cref{sec:existenceL2}.
Set $\mathcal{D}:=L^2(\Omega)$ and $\mathcal{Y}:=H^1(\Omega)$. 
The operator $\opD:=\opE$ represents the natural embedding $H^1(\Omega)\hookrightarrow L^2(\Omega)$.
Note that the weak formulation of the associated state equation can be written in the abstract form 
$\opA[y]-\opB[u]-\opC[v]=0$, where the bounded, linear operators $\opA,\opB,\opC \in \mathbb{L}[H^1(\Omega), H^1(\Omega)^\star] $ 
are given for all $y,u,v,w\in H^1(\Omega)$ as 
\begin{align*}
    \dual{\opA[y]}{w}{H^1(\Omega)}&:=\int_{\Omega}(\mathbf{C}(x)\nabla y(x))\cdot \nabla w(x)\mathrm{d}x+\int_{\Omega}\mathbf{a}(x)y(x)w(x)\mathrm{d}x, \\ 
    \dual{\opB[u]}{w}{H^1(\Omega)}&:=\int_{\Omega}\mathbf{b}(x)u(x) w(x)\mathrm{d}x,\\
    \dual{\opC[v]}{w}{H^1(\Omega)}&:=\int_{\Omega}\mathbf{c}(x)v(x) w(x)\mathrm{d}x.
\end{align*}
It can be checked that the operator $\opA$ is elliptic and self-adjoint under the postulated assumptions, see, e.g., \cite[Section~6]{Evans2010}.
The operators $\opB$ and $\opC$ are self-adjoint as well.

\subsection{Finite element discretization}\label{sec:discrete_NOC}

While the discretization of \eqref{eq:penalized_problem} is rather standard, 
some notation needs to be introduced for the sake of the following subsection.
Let the domain $\Omega$ be discretized by a suitable tessellation $\Omega_\Delta$, 
where $n_p$ denotes the number of vertices and $n_e$ the number of elements in $\Omega_\Delta$. 
All functions from  $H^1(\Omega)$ ($y$,  $u$, $v$, and  $p$) are represented by finite elements 
from $\mathcal{P}^1(\Omega_\Delta)$. 
The corresponding coefficient vectors are denoted by $\vec{y}$, $\vec{u}$, $\vec{v}$, and $\vec{p}$, respectively. 
The set of test functions $H^1(\Omega)$ is represented by the same basis functions.

The coefficient functions $\mathbf{C}$, $\mathbf{a}$, $\mathbf{b}$, and $\mathbf{c}$ 
as well as the desired state $y_\text d$ are assumed to be chosen from $L^\infty(\Omega)$ and 
discretized by functions from $\mathcal{P}^0(\Omega_\Delta)$; their discrete approximations 
are denoted by $C$, $\vec{a}$, $\vec{b}$, $\vec{c}$, and $\vec{y}_\text{d}$, respectively.
The matrix $E_{10}\in\R^{n_e\times n_p}$ realizes the discrete projection of $\mathcal{P}^1$ approximations into $\mathcal P^0$
and corresponds to the natural embedding operator $\opE\colon H^1(\Omega)\to L^2(\Omega)$. 
The mass matrices $M_0(1)$ and $M_1(1)$ correspond to the finite element spaces $\mathcal{P}^0(\Omega_\Delta)$ 
and $\mathcal{P}^1(\Omega_\Delta)$, respectively. 
The stiffness matrix associated with the constant coefficient $1$ (i.e., $\mathbf{C}$ is the identity in $\R^{d\times d}$) is denoted by $K(1)$. 
A detailed description of this discretization and the specific forms of these matrices can be found in \cite{DengMehlitzPruefert2018a}. 

The main difficulty when discretizing \eqref{eq:penalized_problem}
lies in the handling of the penalty term $F(u,v)$. 
Since the Fischer--Burmeister function is penalized with respect to the space $L_2(\Omega)$, 
the mass matrix $M_0(1)$ can be used to evaluate integrals over all elements. 
Interpreting powers and square roots of a vector in a componentwise fashion,
a reasonable discretization of $F(u,v)$ is given by
\[
    \tilde F(\vec{u},\vec{v})
    =
    \tfrac{1}{2} \left(\sqrt{(E_{10}\vec{u})^2+(E_{10}\vec{v})^2}-E_{10}\vec{u}-E_{10}\vec{v}\right)^{\top}
    M_0(1)
    \left(\sqrt{(E_{10}\vec{u})^2+(E_{10}\vec{v})^2}-E_{10}\vec{u}-E_{10}\vec{v}\right)
\]
for all $\vec u,\vec v\in\R^{n_p}$.
The appearance of $\opE_{10}$ is motivated by the proof of \cref{lem:subdifferential_L2_penalization}, 
where the penalty functional $F$ has been represented as the composition of three differentiable mappings:
The natural embedding $\opE:H^1(\Omega)\to L^2(\Omega)$, the Nemytskii-operator associated 
with the squared Fischer--Burmeister function
(as a mapping from $L^2(\Omega)^2$ to $L^1(\Omega)$), and a linear integral operator. 
This discretization strategy leads to the finite-dimensional problem associated with \eqref{eq:penalized_problem} given by
\begin{equation}\label{eq:discretized_penalized_problem}
    \left\{
        \begin{aligned}
            \tfrac{1}{2}(E_{10}\vec{y}-\vec{y}_\text{d})^{\top}M_0(1)(E_{10}\vec{y}-\vec{y}_\text{d})
            +\tfrac{\alpha_1}{2}\vec{u}^{\top}M_1(1)\vec{u}+\tfrac{\alpha_2}{2}\vec{v}^{\top}M_1(1)\vec{v}\qquad&\\
            +\tfrac{\varepsilon}{2}\vec{u}^{\top}(M_1(1)+K(1))\vec{u}
            +\tfrac{\varepsilon}{2}\vec{v}^{\top}(M_1(1)+K(1))\vec{v}+\sigma_k\tilde F(\vec{u},\vec{v}) &\,\rightarrow\,\min_{\vec y,\vec u,\vec v}\\
            (M_{1}(\vec a)+K(C))\vec{y}-M_{1}(\vec b)\vec{u}-M_{1}(\vec c)\vec{v}&\,=\,0.
        \end{aligned}
    \right.
\end{equation}

For the optimality conditions, one first observes that the quadratic function $\tilde F$ 
is differentiable everywhere and that its derivative at $(\vec{u},\vec{v})$ is given by
\begin{equation}\label{eq:FB_First_Order_deriv}
    \tilde F'(\vec{u},\vec{v})=
    \begin{pmatrix}
        E_{10}^{\top}\text{diag}\left(T_u(\vec{u},\vec{v})\right) 
        M_0(1)\left(\sqrt{(E_{10}\vec{u})^2+(E_{10}\vec{v})^2}-E_{10}\vec{u}-E_{10}\vec{v}\right)\\
        E_{10}^{\top}\text{diag}\left(T_v(\vec{u},\vec{v})\right) 
    M_0(1)\left(\sqrt{(E_{10}\vec{u})^2+(E_{10}\vec{v})^2}-E_{10}\vec{u}-E_{10}\vec{v}\right)                                                 \end{pmatrix},
\end{equation}
where the vectors $T_u(\vec{u},\vec{v}),T_v(\vec{u},\vec{v})\in\R^{n_e}$ are defined for all $i\in\{1,\ldots,n_e\}$ as
\[      
    \begin{aligned}
        T_u(\vec{u},\vec v)_i&:=
        \begin{cases}
            \frac{(E_{10}\vec u)_i}{\sqrt{(E_{10}\vec{u})_i^2+(E_{10}\vec{v})_i^2}}-1   
            &\text{if }(\opE_{10}\vec u)_i\neq 0\text{ or }(\opE_{10}\vec v)_i\neq 0,\\
            0   &\text{if }(\opE_{10}\vec u)_i=(\opE_{10}\vec v)_i=0,
        \end{cases}     \\
        T_v(\vec{u},\vec v)_i&:=
        \begin{cases}
            \frac{(E_{10}\vec v)_i}{\sqrt{(E_{10}\vec{u})_i^2+(E_{10}\vec{v})_i^2}}-1   
            &\text{if }(\opE_{10}\vec u)_i\neq 0\text{ or }(\opE_{10}\vec v)_i\neq 0,\\
            0   &\text{if }(\opE_{10}\vec u)_i=(\opE_{10}\vec v)_i=0.
        \end{cases}     \\      
    \end{aligned}
\]
Note that the case $(\opE_{10}\vec u)_i=(\opE_{10}\vec v)_i=0$ corresponds to the \emph{biactive} case, i.e., where the discretized controls
$\vec u$ and $\vec v$ (interpreted in the discretized counterpart of $L^2(\Omega)$, i.e., elementwise) are zero at the same time.

Combining \eqref{eq:discretized_penalized_problem} and \eqref{eq:FB_First_Order_deriv}, 
it is now possible to obtain the following KKT system for the problem \eqref{eq:discretized_penalized_problem}:
\begin{subequations}\label{eq:NOC_discrete}
    \begin{align}
        E_{10}^{\top}M_0(1)E_{10}\vec{y}-E_{10}^{\top}M_{0}(1)\vec{y}_\text{d}-(M_{1}(\vec{a})+K(C))\vec{p}&=0\\
        \left[\alpha_1M_1(1)+\varepsilon\left(M_1(1)+K(1)\right)\right]\vec{u}
        +\sigma_k \tilde F'_{\vec u}(\vec{u}, \vec{v}) +M_{1}(\vec{b})\vec{p}&=0\\
        \left[\alpha_2M_1(1)+\varepsilon\left(M_1(1)+K(1)\right)\right]\vec{v}
        +\sigma_k \tilde F'_{\vec v}(\vec{u}, \vec{v}) +M_{1}(\vec{c})\vec{p}&=0\\
        -(M_{1}(\vec{a})+K(C))\vec{y}+M_{1}(\vec{b})\vec{u}+M_{1}(\vec{c})\vec{v}&=0.
    \end{align}
\end{subequations}  
Recall that $\vec{p}$ represents the discretized adjoint state and can also be considered as a multiplier related to the discretized state equation. 
Since the function $\tilde F'$ is nonsmooth but Lipschitz continuous, 
the nonlinear system \eqref{eq:NOC_discrete} can be solved using a damped semismooth Newton-type method, see \cite{QiSun1999}.
Note that the domain of nonsmoothness associated with the mapping 
$\tilde F'\colon\R^{n_p}\times\R^{n_p}\to\R^{n_p}\times\R^{n_p}$ is given by 
\[
    \{(\vec u,\vec v)\in\R^{n_p}\times\R^{n_p}\,|\,\exists i\in\{1,\ldots,n_e\}\colon\,(E_{10}\vec u)_i=(E_{10}\vec v)_i=0\}.
\]
A particular Newton derivative can then be chosen as an element of Clarke's generalized Jacobian, 
see \cite{Clarke:1990a}, associated with $\tilde F'$ at $(\vec u, \vec v)$ that is zero at indices corresponding 
to biactive components of $(E_{10}\vec u,E_{10}\vec v)$.
This choice will be used in the proposed method. 

Next, due to the well-known local convergence behavior of Newton's method, the initialization of $\vec{u}$ and $\vec{v}$ for the 
numerical solution of \eqref{eq:NOC_discrete} has to be taken into consideration. 
For that purpose, consider the (infinite-dimensional) problem
\begin{equation}\label{eq:OCPC}\tag{OCNC}
    \left\{
        \begin{aligned}
            \tfrac12\norm{\opE[y]-y_\text d}{L^2(\Omega)}^2+J(u,v)&\,\rightarrow\,\min_{y,u,v}&&&\\
            -\nabla\cdot(\mathbf{C}\nabla y)+\mathbf{a}y&\,=\,\mathbf bu+\mathbf cv && \text{a.e. on } \Omega&\\
            \vec{\mathbf{n}}\cdot(\mathbf{C}\nabla y)&\,=\,0&&\text{a.e. on }\bd \Omega&\\ 
            u,v&\,\geq\,0&&\text{a.e. on }\Omega&
        \end{aligned}
    \right.
\end{equation}
which results from \eqref{eq:OCCC} by omitting the equilibrium condition \eqref{eq:equilibrium_condition} 
and merely imposing nonnegativity constraints. 
Note that \eqref{eq:OCPC} is convex and can be solved globally by combining a penalty algorithm 
and a semismooth Newton method, see \cite{DengMehlitzPruefert2018b}.
The associated global minimizer is uniquely determined.
If its solution already satisfies the equilibrium condition \eqref{eq:equilibrium_condition}, then a global
minimizer of \eqref{eq:OCCC} has already been detected.
The discretized counterpart of \eqref{eq:OCPC} can be derived similarly as stated above. 
The associated (discrete) optimal solution $(\vec y_0,\vec u_0,\vec v_0)$ will be used as the starting vector of the semismooth Newton-type method.
An abstract description of the proposed numerical method for the computational solution of \eqref{eq:OCCC} is presented in \cref{alg:pathfollowing}.
In step \textbf{S2} of this algorithm, $\norm{\cdot}{M}$ denotes a weighted Euclidean norm which represents the discretized
$H^1$-norm, see \cite{DengMehlitzPruefert2018b} for details.
\begin{algorithm}[h]
    \begin{description}
        \item [{S0}] 
            Let $\{\sigma_{k}\}_{k\in\N}$ be a sequence of positive penalty parameters with 
            $\sigma_{k}\to\infty$ as $k\to\infty$. 
            Let a tolerance $\text{eps}>0$ be given. 
            Let $(\vec y_0,\vec u_0,\vec v_0)$ be the (discrete) optimal solution associated with \eqref{eq:OCPC}.
            Compute $\vec p_{0}$ as a solution of the discretized adjoint equation with source $E_{10} \vec y_{0}-\vec y_\text d$. 
            Set $k:=1$.
        \item [{S1}] 
            Solve the discretized KKT system \eqref{eq:NOC_discrete} 
            for fixed $\sigma_{k}$ by a damped, semismooth Newton-type method with starting point 
            $(\vec y_{k-1},\vec{u}_{k-1},\vec{v}_{k-1},\vec p_{k-1})$. 
            Let $(\vec y_k,\vec u_{k},\vec v_{k},\vec p_k)$ be the associated solution.
        \item [{S2}] 
            If $\norm{(\vec u_k,\vec v_k)-(\vec u_{k-1},\vec v_{k-1})}{M}<\text{eps}$ holds true,
            then return $(\vec u_{k}, \vec v_{k})$. 
            Otherwise, set $k:=k+1$ and go to \textbf{S1}. 
    \end{description}

    \caption{Abstract algorithm\label{alg:pathfollowing}}
\end{algorithm}

\subsection{Checking strong stationarity}\label{sec:stationarity_test}

It has to be noted that in step \textbf{S1} of \cref{alg:pathfollowing}, 
one generally only computes critical points to \eqref{eq:discretized_penalized_problem}.
Since the penalty functional $F$ defined in \eqref{eq:penalty} is not convex, 
these cannot be guaranteed to be global minimizers of \eqref{eq:discretized_penalized_problem} 
and therefore the convergence result of \cref{prop:strong_convergence_of_surrogate_solutions} does not apply. 
It is therefore sensible to verify whether the output is at least a strongly stationary point of 
\eqref{eq:OCCC_reduced} in the sense of \cref{cor:consequences_of_S_Stationarity}, 
since the local minimizers of \eqref{eq:OCCC_reduced} can be found among its strongly stationary points. 
Note that available first-order methods for the numerical solution of complementarity problems mainly
compute so-called Clarke- or Mordukhovich-stationary points and that these stationarity notions are weaker
than strong stationarity, 
see, e.g., \cite{HoheiselKanzowSchwartz2013} for a discussion of the finite-dimensional situation.
Thus, checking strong stationarity is recommendable even if a directly discretized version of
\eqref{eq:OCCC_reduced} is solved using the available techniques from finite-dimensional MPCC-theory.
A possible approach for verifying strong stationarity is described in the following.

Let $(y, u,v)\in H^1(\Omega)^3$ be feasible to \eqref{eq:OCCC}.
If this point is a local minimizer, then \cref{cor:consequences_of_S_Stationarity} implies that 
\begin{equation} \label{eq:stationary_equality}
    \dual{y-y_\text{d}}{y}{L^2(\Omega)}
    +\alpha_1\dual{u}{u}{L^2(\Omega)}
    +\alpha_2\dual{v}{v}{L^2(\Omega)}
    +\varepsilon\dual{u}{u}{H^1(\Omega)}
    +\varepsilon\dual{v}{v}{H^1(\Omega)}
    =0
\end{equation}
and that
\begin{equation} \label{eq:stationary_inequality}
    \dual{y-y_\text{d}}{z_y}{L^2(\Omega)}
    +\alpha_1\dual{u}{z_{u}}{L^2(\Omega)}
    +\alpha_2\dual{v}{z_{v}}{L^2(\Omega)}
    +\varepsilon\dual{u}{z_{u}}{H^1(\Omega)}
    +\varepsilon\dual{v}{z_{v}}{H^1(\Omega)}
    \geq 0
\end{equation}
for any pair $(z_u,z_v)\in H^1_+(\Omega)^2$ with 
\[
    \supp z_u\subset I^{+0}(u,v)\cup I^{00}(u,v),\qquad \supp z_v\subset I^{0+}(u,v)\cup I^{00}(u,v),
\]
where $z_y\in H^1(\Omega)$ is the solution of the state equation $\opA[z_y]-\opB[z_u]-\opC[z_v]=0$. 

Using the same discretization technique as described in \cref{sec:discrete_NOC}, 
a discrete counterpart to \eqref{eq:stationary_equality} is
\begin{equation}\label{eq:test_discrete_first_condition}
    \begin{aligned}[t]
        \Theta:=\vec{y}^{\top}E_{10}^\top M_0(1)E_{10}\vec{y}-\vec{y}^{\top}E_{10}^\top M_{0}(1)\vec{y}_\text d
        &+\alpha_1\vec{u}^{\top}M_1(1)\vec{u}
        +\alpha_2\vec{v}^{\top}M_1(1)\vec{v}\\
        &+\varepsilon\vec{u}^{\top}(K(1)+M_1(1))\vec{u}
        +\varepsilon\vec{v}^{\top}(K(1)+M_1(1))\vec{v}
        =0.
    \end{aligned}
\end{equation}
Clearly, a certain tolerance for the violation of \eqref{eq:test_discrete_first_condition}
needs to be imposed in practice.

The numerical verification of condition \eqref{eq:stationary_inequality} 
requires an appropriate choice of discrete test functions $\vec z_u,\vec z_v$ 
for given discretized controls  $(\vec{u},\vec{v})$ in the finite element space $\mathcal{P}^1(\Omega_\Delta)$.
Considering the employed finite element discretization of \eqref{eq:OCCC_reduced}, 
one particular choice is from the set of basis functions associated with $\mathcal P^1(\Omega_\Delta)$. 
Since the support of each of these ``hat functions'' covers all elements adjoining a single vertex, 
a corresponding elementwise approximation of the set  $I^{+0}(u,v)$, $I^{0+}(u,v)$, and $I^{00}(u,v)$,
see \eqref{eq:I+0}, \eqref{eq:I0+}, and \eqref{eq:I00}, respectively, is required as well. 
This can be defined using the projection of $\vec u,\vec v$ from $\mathcal{P}^1(\Omega_\Delta)$ to $\mathcal{P}^0(\Omega_\Delta)$
using the matrix $E_{10}$, which will be denoted by $\vec{u}^0:=E_{10}\vec u$ and $\vec{v}^0:=E_{10}\vec v$, respectively. 
This leads to the corresponding discrete sets
\[
    \begin{aligned}
        I^{+0}(\vec{u},\vec{v})&:=\left\{ i\in\{1,\ldots,n_e\}\,\middle|\, \vec{u}^0_i>0 \text{ and } \vec{v}^0_i=0 \right\},\\
        I^{00}(\vec{u},\vec{v})&:=\left\{ i\in\{1,\ldots,n_e\}\,\middle|\, \vec{u}^0_i=0 \text{ and } \vec{v}^0_i=0 \right\},\\
        I^{0+}(\vec{u},\vec{v}) &:=\left\{ i\in\{1,\ldots,n_e\}\,\middle|\, \vec{u}^0_i=0 \text{ and } \vec{v}^0_i>0 \right\}.
    \end{aligned}
\]
For any pair of basis vectors $(\vec{z}_{u},\vec{z}_{v})$ whose support is contained in $I^{+0}(\vec{u},\vec{v})\cup I^{00}(\vec{u},\vec{v})$
and $I^{0+}(\vec{u},\vec{v})\cup I^{00}(\vec{u},\vec{v})$, respectively, one can then check whether
\begin{equation}\label{eq:test_discrete}
	\begin{aligned}[t]
		\Sigma(\vec{z}_{u},\vec{z}_{v}):=\vec{z}_y^{\top}E_{10}^{\top}M_{0}(1)E_{10}\vec{y} &  
        -\vec{z}_{y}^{\top}E_{10}^{\top} M_{0}(1)\vec{y}_{\text{d}} 
        +\alpha_{1}\vec{u}^{\top}M_{1}(1)\vec{z}_{u}
        +\alpha_{2}\vec{v}^{\top}M_{1}(1)\vec{z}_{v}\\
        & 	+\varepsilon\vec{u}^{\top}\left(K(1)+M_1(1)\right)\vec{z}_{u} 
        +\varepsilon\vec{v}^{\top}\left(K(1)+M_1(1)\right)\vec{z}_{v}\geq0,
    \end{aligned}
\end{equation}
where the state $\vec{z}_{y}$ associated with $(\vec{z}_{u},\vec{z}_{v})$
is obtained via 
\[
    (M_{1}(\vec{a})+K(C)))\vec{z}_{y}=M_{1}(\vec{b})\vec{z}_{u}+M_{1}(\vec{c})\vec{z}_{v}.
\]

In numerical practice, a certain tolerance with respect to negative values of 
$\Sigma(\vec z_u,\vec z_v)$
is necessary since \cref{alg:pathfollowing} involves a penalty procedure and hence yields, 
in general, only \emph{almost} feasible points for \eqref{eq:OCCC}. 
Rather than testing for nonnegativity, it is thus checked whether 
$\Sigma(\vec z_u,\vec z_v)$ is larger than a given negative tolerance.

\section{Numerical examples}\label{sec:numerical_examples}

The proposed numerical method from \cref{sec:numerics} is illustrated by means of three experiments. 
These examples are of academical nature and constructed in such a way that the different features
of the stationarity test are visualized. In the first example, \cref{alg:pathfollowing} computes
globally optimal controls, and thus the results of the corresponding stationarity test provide
a first benchmark for a \emph{numerically passed} stationarity test. 
Examples 2 and 3 provide nontrivial situations where the stationarity test is passed and failed,
respectively. Recall that whenever the stationarity test fails, the considered point cannot be a
local minimizer of the underlying complementarity-constrained program, see \cref{cor:consequences_of_S_Stationarity}.

Let $\Omega:=(0,1)^{2}\subset\R^{2}$. 
For all examples in this section, let $\mathbf C$ be the identity matrix in $\R^{2\times 2}$ and let $\mathbf a\equiv 1$, 
$\mathbf b=\chi_{\Omega_u} $, as well as $\mathbf c=\chi_{\Omega_v}$ hold where
$\Omega_{u}:=\{(x_{1},x_{2})\in\Omega\,|\,x_{2}<0.25\}$ and
$\Omega_{v}:=\{(x_{1},x_{2})\in\Omega\,|\,x_{2}>0.75\}$ are fixed subdomains of $\Omega$.
The values $\alpha_1=\alpha_2=0$ are fixed for this section.
Furthermore, $\varepsilon:=10^{-8}$ is used for all experiments.
The implementation is carried out using the object oriented finite element \textsc{matlab} class library OOPDE, see \cite{Pruefert2015}.

In order to construct examples where the controls are independent of $x_2$, cf. 
\cite[Section~6]{ClasonItoKunisch2016} where parabolic problems were considered
and the controls only depend on time, the problem \eqref{eq:OCCC} will
be equipped with the additional restrictions
\begin{equation}\label{eq:gradient_constraints}
    \partial_{x_2}u=\partial_{x_2}v=0\qquad\text{a.e. on }\Omega.
\end{equation}
These constraints realize controls depending only on $x_{1}$ and being
constant with respect to $x_{2}$ while allowing to use the same finite element space for the discretization of $u$, $v$, and $y$.
Note that the additional constraints do not influence the complementarity constraints (which are now imposed on $\Omega$ rather than $(0,1)$).  
Due to these additional gradient constraints, 
structured grids on the discretized domain $\Omega_\Delta$ are preferentially used for the following examples. 
On unstructured grids, which can be created by local refinement of an arbitrary set of triangles of a structured mesh, 
the use of basis functions from $\mathcal{P}^1(\Omega_\Delta)$ forces the resulting controls to be globally affine,
see \cite[Section~7.2]{DengMehlitzPruefert2018a} for details. This issue can be solved by choosing basis functions from
$\mathcal P^2(\Omega_\Delta)$.
A detailed discussion of optimal control problems with gradient constraints
can be found in \cite{DengMehlitzPruefert2018a}.

To compare results, the solutions of the control problem \eqref{eq:OCPC} without complementarity constraints
(equipped with the additional constraints \eqref{eq:gradient_constraints}) will be 
considered.
Recall that optimal controls $(u,v)\in H^1(\Omega)^2$ of  \eqref{eq:OCPC} additionally fulfilling 
the equilibrium condition \eqref{eq:equilibrium_condition}  solve \eqref{eq:OCCC} as well, 
and that these controls are used as starting points for solving \eqref{eq:OCCC}. 
Since the computed controls are nearly constant with respect to $x_2$, only $u(x_1,0)$ and $v(x_1,0)$ are plotted for the sake of easier comparison.
To evaluate the satisfaction of the complementarity conditions, 
the maximal absolute value of the Fischer--Burmeister function applied componentwise to $(\vec u^0,\vec v^0)$ is reported. 
Furthermore, $\Sigma(\vec z_u,\vec z_v)$ from \eqref{eq:test_discrete} is checked with a tolerance 
\begin{equation}\label{eq:definition_tolerance}
    \mathrm{tol} := 0.01 \left|\min\nolimits_{(\vec z_u,\vec z_v)\text{ feasible test pair}} \Sigma(\vec z_u,\vec z_v)\right|,
\end{equation}
and the number as well as distribution of pairs $(\vec z_u,\vec z_u)$ for which $\Sigma(\vec z_u,\vec z_v)>\mathrm{tol}$ (``numerically positive''), 
$|\Sigma(\vec z_u,\vec z_v)|\leq \mathrm{tol}$ (``numerically zero''), 
or $\Sigma(\vec z_u,\vec z_v)<-\mathrm{tol}$ (``numerically negative'') holds is given.

\paragraph{Example 1}

In this example, the desired state is given by the discontinuous function 
\[
    y_{\text d}(x):=\begin{cases}
        3 & \text{for }x\in[(0.25,0.75)\times(0,0.25)]\cup[(0,0.5)\times(0.75,1)]\\
        1 & \text{otherwise}.
    \end{cases}
\] 
The optimal controls of problem \eqref{eq:OCPC}
are already (numerically) complementary, see \cref{fig:ex4:ocpc}, and thus provide a 
globally optimal solution of \eqref{eq:OCCC}. Correspondingly, 
they coincide with the controls computed for \eqref{eq:OCCC}, see \cref{fig:ex4:occc}, 
for which the maximal absolute value of the Fischer--Burmeister function is $2.08\cdot 10^{-5}$.
With the tolerance chosen as $\mathrm{tol}=2.27\cdot10^{-10}$, $5789$ pairs are labeled as numerically positive, 
$228$ as numerically zero, and $544$ as numerically negative, see \cref{fig:ex4:test}. 
Thus, only $8.3\%$ of all tested pairs belong to the latter category.
Note that $\Theta=-1.65\cdot 10^{-7}$ holds for the constant defined in \eqref{eq:test_discrete_first_condition}.

Observing that \cref{alg:pathfollowing} computes the globally optimal solution
of \eqref{eq:OCCC} in this example, the above data represent an
\emph{approximately} passed stationarity test. 
\definecolor{mycolor1}{rgb}{0.231674, 0.318106, 0.544834}
\definecolor{mycolor2}{rgb}{0.369214, 0.788888, 0.382914}
\renewcommand{\topfraction}{0.9}
\begin{figure}[t]
    \centering
    \begin{subfigure}[t]{0.49\textwidth}
        \centering
        \begin{tikzpicture}

\begin{axis}[%
width=\textwidth,
xmin=0,
xmax=1,
xlabel=$x_1$,
ymin=-1,
ymax=35,
legend style={legend pos=north east,legend style={draw=none}},
]
\addplot [color=mycolor1,solid,line width=1.5pt]
  table[row sep=crcr]{%
0	-0.000774546750747618\\
0.199999999999999	-0.000519851680927275\\
0.425000000000001	0.000100868162412837\\
0.4375	3.12651410484792\\
0.449999999999999	8.48056739942542\\
0.462499999999999	14.45492158816\\
0.475000000000001	19.7944506583162\\
0.487500000000001	23.5885232388268\\
0.5	25.2649490419579\\
0.512499999999999	24.5860091277224\\
0.524999999999999	21.6468785300207\\
0.537500000000001	16.876575621154\\
0.5625	5.25053815978681\\
0.574999999999999	0.963715871686624\\
0.587499999999999	2.8793481913425e-05\\
0.712499999999999	-0.000217157709080595\\
0.962499999999999	-0.000739326266646856\\
1	-0.000749143266503438\\
};
\addlegendentry{$u(x_1,0)$}

\addplot [color=mycolor2,solid,line width=1.5pt]
  table[row sep=crcr]{%
0	33.1689703011959\\
0.0125000000000028	32.6694577326451\\
0.0249999999999986	31.9890517080171\\
0.0375000000000014	31.102592052781\\
0.0499999999999972	29.9724401460156\\
0.0625	28.5530559648829\\
0.0750000000000028	26.797397042952\\
0.0874999999999986	24.665042837401\\
0.100000000000001	22.1319449166012\\
0.112499999999997	19.201688807389\\
0.125	15.9181285835865\\
0.137500000000003	12.3792279908477\\
0.149999999999999	8.75191361105632\\
0.162500000000001	5.28771579191143\\
0.174999999999997	2.33895709071454\\
0.1875	0.37524039861318\\
0.200000000000003	-4.53290486746027e-06\\
0.649999999999999	-1.1568866618461e-05\\
1	-1.27436038326323e-05\\
};
\addlegendentry{$v(x_1,0)$}

\end{axis}
\end{tikzpicture}%
        \caption{solution of \eqref{eq:OCPC}}\label{fig:ex4:ocpc}
    \end{subfigure}
    \hfill
    \begin{subfigure}[t]{0.49\textwidth}
        \centering
        \begin{tikzpicture}

\begin{axis}[%
width=\textwidth,
xmin=0,
xmax=1,
xlabel=$x_1$,
ymin=-1,
ymax=35,
legend style={legend pos=north east,legend style={draw=none}},
]
\addplot [color=mycolor1,solid,line width=1.5pt]
  table[row sep=crcr]{%
0	-1.4659823762031e-05\\
0.425000000000001	4.35463544405934e-05\\
0.4375	3.12663519969359\\
0.449999999999999	8.48039739684748\\
0.462499999999999	14.4540819373248\\
0.475000000000001	19.7926588911085\\
0.487500000000001	23.5856066201274\\
0.5	25.2608591850069\\
0.512499999999999	24.5808359129384\\
0.524999999999999	21.6408669690091\\
0.537500000000001	16.8701418009202\\
0.5625	5.24527404423822\\
0.574999999999999	0.960466453116439\\
0.587499999999999	3.67398427556509e-05\\
1	-9.67675910246157e-06\\
};
\addlegendentry{$u(x_1,0)$}

\addplot [color=mycolor2,solid,line width=1.5pt]
  table[row sep=crcr]{%
0	33.1744754965281\\
0.0125000000000028	32.6747958294901\\
0.0249999999999986	31.9939088400311\\
0.0375000000000014	31.1066666780404\\
0.0499999999999972	29.975457467639\\
0.0625	28.5547793533139\\
0.0750000000000028	26.7976409964451\\
0.0874999999999986	24.6636892136495\\
0.100000000000001	22.1289599098897\\
0.112499999999997	19.197142419403\\
0.125	15.9122160267819\\
0.137500000000003	12.3722922742884\\
0.149999999999999	8.74447023059745\\
0.162500000000001	5.28047776691635\\
0.174999999999997	2.33286065672185\\
0.1875	0.371470611863373\\
0.200000000000003	2.59213634876687e-05\\
0.5625	5.97998450757586e-09\\
1	3.60026366124089e-05\\
};
\addlegendentry{$v(x_1,0)$}

\end{axis}
\end{tikzpicture}%
        \caption{solution of \eqref{eq:OCCC}}\label{fig:ex4:occc}
    \end{subfigure}
    \caption{Example 1: computed controls}
    \label{fig:ex4_sol} 
\end{figure}
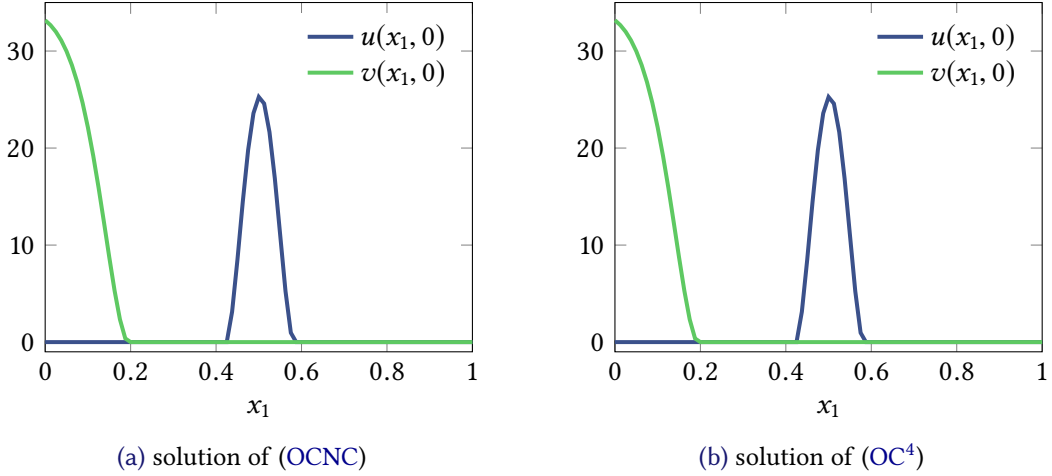
\begin{figure}[t]
    \centering
    \begin{subfigure}[t]{0.49\textwidth}
        \centering
        \begin{tikzpicture}

\begin{axis}[
height=0.75\textwidth,
colorbar,
colorbar style={
ytick={-0.000292517664368325,-0.000306427102567224,-0.000314563602319679,-0.000246311510831302,-0.000260220949030201,-0.000268357448782656,-0.0002741303872291,-0.000278608226169426,-0.000282266886981555,-0.000285360240622038,-0.000288039825428,-0.00029040338673401,-0.000200105357294278,-0.000214014795493178,-0.000222151295245632,-0.000227924233692077,-0.000232402072632402,-0.000236060733444532,-0.000239154087085015,-0.000241833671890976,-0.000244197233196986,-0.000153899203757255,-0.000167808641956154,-0.000175945141708609,-0.000181718080155054,-0.000186195919095379,-0.000189854579907508,-0.000192947933547991,-0.000195627518353953,-0.000197991079659963,-0.000107693050220232,-0.000121602488419131,-0.000129738988171586,-0.00013551192661803,-0.000139989765558356,-0.000143648426370485,-0.000146741780010968,-0.00014942136481693,-0.00015178492612294,-6.14868966832083e-05,-7.53963348821076e-05,-8.35328346345622e-05,-8.9305773081007e-05,-9.37836120213322e-05,-9.74422728334616e-05,-0.000100535626473945,-0.000103215211279906,-0.000105578772585916,-1.39748909141323e-05,-2.79497818282647e-05,-3.73266810975388e-05,-4.30996195439837e-05,-4.75774584843089e-05,-5.12361192964383e-05,-5.43294729369212e-05,-5.70090577428831e-05,-5.93726190488927e-05,0,1.39748909141323e-05,2.79497818282646e-05,3.73266810975388e-05,4.30996195439837e-05,4.75774584843089e-05,5.12361192964383e-05,5.43294729369211e-05,5.70090577428831e-05,5.93726190488928e-05,6.14868966832082e-05,7.53963348821076e-05,8.35328346345622e-05,8.9305773081007e-05,9.37836120213322e-05,9.74422728334616e-05,0.000100535626473944,0.000103215211279906,0.000105578772585916,0.000107693050220232,0.000121602488419131,0.000129738988171586,0.00013551192661803,0.000139989765558356,0.000143648426370485,0.000146741780010968,0.00014942136481693,0.00015178492612294,0.000153899203757255,0.000167808641956154,0.000175945141708609,0.000181718080155054,0.000186195919095379,0.000189854579907508,0.000192947933547991,0.000195627518353953,0.000197991079659963,0.000200105357294278,0.000214014795493178,0.000222151295245632,0.000227924233692077,0.000232402072632402,0.000236060733444532,0.000239154087085015,0.000241833671890976,0.000244197233196986,0.000246311510831302,0.000260220949030201,0.000268357448782656,0.0002741303872291,0.000278608226169426,0.000282266886981555,0.000285360240622038,0.000288039825428,0.00029040338673401,0.000292517664368325,0.000306427102567224,0.000314563602319679},
yticklabels={${-10^{-4}}$,,,${-10^{-5}}$,,,,,,,,,${-10^{-6}}$,,,,,,,,,${-10^{-7}}$,,,,,,,,,${-10^{-8}}$,,,,,,,,,${-10^{-9}}$,,,,,,,,,${-10^{-10}}$,,,,,,,,,,${10^{-10}}$,,,,,,,,,${10^{-9}}$,,,,,,,,,${10^{-8}}$,,,,,,,,,${10^{-7}}$,,,,,,,,,${10^{-6}}$,,,,,,,,,${10^{-5}}$,,,,,,,,,${10^{-4}}$,,},
scaled y ticks=false,
tick label style={font=\scriptsize},
},
colormap={mymap}{[1pt]
  rgb(0pt)=(0.176470588235294,0,0.294117647058824);
  rgb(1pt)=(0.329411764705882,0.152941176470588,0.533333333333333);
  rgb(2pt)=(0.501960784313725,0.450980392156863,0.674509803921569);
  rgb(3pt)=(0.698039215686274,0.670588235294118,0.823529411764706);
  rgb(4pt)=(0.847058823529412,0.854901960784314,0.92156862745098);
  rgb(5pt)=(0.968627450980392,0.968627450980392,0.968627450980392);
  rgb(6pt)=(0.996078431372549,0.87843137254902,0.713725490196078);
  rgb(7pt)=(0.992156862745098,0.72156862745098,0.388235294117647);
  rgb(8pt)=(0.87843137254902,0.509803921568627,0.0784313725490196);
  rgb(9pt)=(0.701960784313725,0.345098039215686,0.0235294117647059);
  rgb(10pt)=(0.498039215686275,0.231372549019608,0.0313725490196078)
},
point meta max=0.00031557956563535,
point meta min=-0.00031557956563535,
xmin=0,
xmax=81,
xlabel=$\vec z_u$,
ylabel=$\vec z_v$,
ymin=0,
ymax=81,
]
\addplot graphics [includegraphics cmd=\pgfimage,xmin=0, xmax=81, ymin=0, ymax=81] {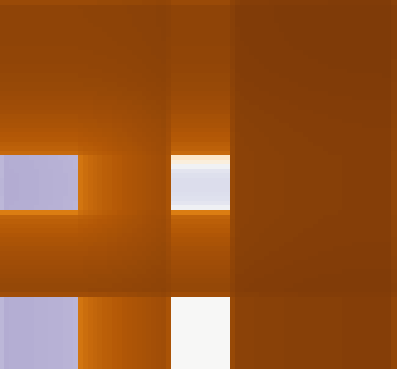};
\end{axis}

\end{tikzpicture}
        \caption{$\Sigma(\vec z_u,\vec z_v)$}\label{fig:ex4:comp}
    \end{subfigure}
    \hfill
    \begin{subfigure}[t]{0.49\textwidth}
        \centering
        \begin{tikzpicture}

\begin{axis}[
height=0.75\textwidth,
point meta max=0.00031557956563535,
point meta min=-2.27162053661571e-08,
xlabel=$\vec z_u$,
ylabel=$\vec z_v$,
xmin=0, xmax=81,
ymin=0, ymax=81,
axis on top,
major tick length=0pt
]
\addplot graphics [includegraphics cmd=\pgfimage,xmin=0, xmax=81, ymin=0, ymax=81] {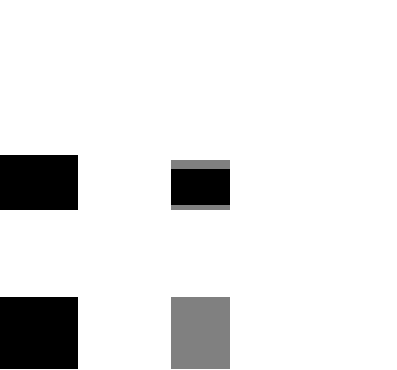};
\end{axis}

\end{tikzpicture}
        \caption{pairs marked numerically positive (white), numerically zero (gray), numerically negative (black)}\label{fig:ex4:test}
    \end{subfigure}
    \caption{Example 1: values of stationarity test and distribution of failed pairs}
    \label{fig:ex4_stat}
\end{figure}

\paragraph{Example 2}
Here, the desired state is chosen to be the (weak) solution of the elliptic boundary value problem
\[
    \left\{
        \begin{aligned}
            -\Delta y(x) &\,=\,0&\qquad&\text{a.e. on }\Omega&\\
            y(x) &\,=\,2\max\{0;x_1\cos(0.75\pi x_{1})\}&&\text{a.e. on }\Gamma_{1}&\\
            y(x) &\,=\,0.25&&\text{a.e. on }\Gamma_{2}&\\
            \vec{\mathbf n}(x)\cdot\nabla y(x)&\,=\,0&&\text{a.e. on }\Gamma_3&
        \end{aligned}
    \right.
\]
where $\Gamma_{1}:=[0,1]\times\{0\}$, 
$\Gamma_{2}:=[0,1]\times\{1\}$, and $\Gamma_3:=\{0,1\}\times[0,1]$ are fixed.
The optimal controls of the associated problem \eqref{eq:OCPC} do not fulfill the complementarity
condition but already provide a biactive set, see \cref{fig:ex1:ocpc}. 
On the other hand, the computed solution for \eqref{eq:OCCC} approximately satisfies 
the complementarity condition, see \cref{fig:ex1:occc}, 
with a maximal absolute value of the Fischer--Burmeister function of approximately $3.58\cdot 10^{-6}$.
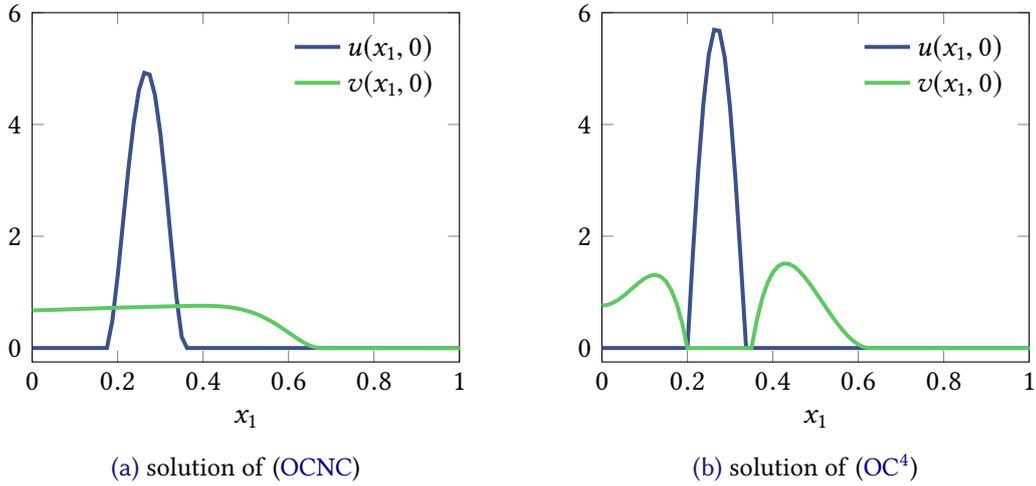
\begin{figure}[t]
    \centering
    \begin{subfigure}[t]{0.49\textwidth}
        \centering
        \begin{tikzpicture}

\begin{axis}[%
width=\textwidth,
xmin=0,
xmax=1,
xlabel=$x_1$,
ymin=-0.25,
ymax=6,
legend style={legend pos=north east,legend style={draw=none}},
]
\addplot [color=mycolor1,solid,line width=1.5pt]
  table[row sep=crcr]{%
0	-1.63765944636296e-05\\
0.175	1.0999089676389e-05\\
0.1875	0.4816651400351\\
0.2	1.28339287233451\\
0.225	3.18856480125628\\
0.2375	4.01619005124615\\
0.25	4.61823539600429\\
0.2625	4.92354204369873\\
0.275	4.89282565416787\\
0.2875	4.52156087792864\\
0.3	3.84289677596361\\
0.3125	2.93058600615304\\
0.325	1.90188404830439\\
0.3375	0.920350989593821\\
0.35	0.198469998081544\\
0.3625	-5.16472862432948e-06\\
0.9125	-0.000193092357813285\\
1	-0.000200446846562841\\
};
\addlegendentry{$u(x_1,0)$}

\addplot [color=mycolor2,solid,line width=1.5pt]
  table[row sep=crcr]{%
0	0.672725235582645\\
0.0249999999999999	0.67760239415077\\
0.05	0.683090066939403\\
0.075	0.6891328941114\\
0.1125	0.69891924058434\\
0.1625	0.712150369866321\\
0.1875	0.71833112465793\\
0.2125	0.724027596734417\\
0.2375	0.729235271762426\\
0.275	0.73641398929213\\
0.35	0.750244346971682\\
0.3625	0.752229730126364\\
0.375	0.753840635057313\\
0.3875	0.754860810502356\\
0.4	0.755011995880262\\
0.4125	0.753950209497013\\
0.425	0.751264780393593\\
0.4375	0.746480842836137\\
0.45	0.73906611844849\\
0.4625	0.72844279617894\\
0.475	0.714005245043241\\
0.4875	0.695144240511689\\
0.5	0.671278223300286\\
0.5125	0.641891923703708\\
0.525	0.606582526808709\\
0.5375	0.565113123620892\\
0.55	0.517472977171075\\
0.5625	0.463943673769318\\
0.575	0.405169847918021\\
0.5875	0.342232708117858\\
0.6125	0.21081928892215\\
0.625	0.14734327370702\\
0.6375	0.0898314077086013\\
0.65	0.0425773422016047\\
0.6625	0.0106676811871078\\
0.675	1.3390608399888e-08\\
1	-1.17363287754912e-07\\
};
\addlegendentry{$v(x_1,0)$}

\end{axis}
\end{tikzpicture}%
        \caption{solution of \eqref{eq:OCPC}}\label{fig:ex1:ocpc}
    \end{subfigure}
    \hfill
    \begin{subfigure}[t]{0.49\textwidth}
        \centering
        \begin{tikzpicture}

\begin{axis}[%
width=\textwidth,
xmin=0,
xmax=1,
xlabel=$x_1$,
ymin=-0.25,
ymax=6,
legend style={legend pos=north east,legend style={draw=none}},
]
\addplot [color=mycolor1,solid,line width=1.5pt]
  table[row sep=crcr]{%
0	-3.89412155854529e-07\\
0.2	3.58995323246347e-07\\
0.2125	1.64417508706178\\
0.225	3.14362046660565\\
0.2375	4.37701458695869\\
0.25	5.2491829414291\\
0.2625	5.69378833222503\\
0.275	5.6760386796025\\
0.2875	5.19546878402539\\
0.3	4.28881770606541\\
0.3125	3.03298489460585\\
0.325	1.54801176934695\\
0.3375	4.91545925562775e-06\\
1	-3.20789147867373e-06\\
};
\addlegendentry{$u(x_1,0)$}

\addplot [color=mycolor2,solid,line width=1.5pt]
  table[row sep=crcr]{%
0	0.757943026761256\\
0.0125	0.771364241897604\\
0.0249999999999999	0.80587082415307\\
0.0375000000000001	0.859401410534695\\
0.05	0.928518310988721\\
0.0625	1.00841161583916\\
0.075	1.09291146791472\\
0.0874999999999999	1.17451442173782\\
0.1	1.24443058270146\\
0.1125	1.29265894146864\\
0.125	1.30809803281611\\
0.1375	1.27869824402024\\
0.15	1.19166057230483\\
0.1625	1.03368352643541\\
0.175	0.791256812706355\\
0.1875	0.450995141476369\\
0.2	7.01996323293486e-08\\
0.35	1.76010606312005e-07\\
0.3625	0.497856339201983\\
0.375	0.881061846252264\\
0.3875	1.16169871040737\\
0.4	1.35221997805548\\
0.4125	1.46508272687294\\
0.425	1.51241613834528\\
0.4375	1.50574274977001\\
0.45	1.455762110671\\
0.4625	1.37220114892352\\
0.475	1.26373075851855\\
0.4875	1.13794522089381\\
0.5	1.00139823968345\\
0.5375	0.579172164299843\\
0.55	0.448054398487506\\
0.5625	0.32751312399875\\
0.575	0.220714106899987\\
0.5875	0.130888957748941\\
0.6	0.061506984249762\\
0.6125	0.0164278024502642\\
0.625	2.99137108024095e-05\\
1	1.7423532047145e-05\\
};
\addlegendentry{$v(x_1,0)$}

\end{axis}
\end{tikzpicture}%
        \caption{solution of \eqref{eq:OCCC}}\label{fig:ex1:occc}
    \end{subfigure}
    \caption{Example 2: computed controls}
    \label{fig:ex1_sol} 
\end{figure}
\begin{figure}
    \centering
    \begin{subfigure}[t]{0.49\textwidth}
        \centering
        \begin{tikzpicture}

\begin{axis}[
height=0.75\textwidth,
colorbar,
colorbar style={%
ytick={-2.59634854647523e-05,-2.82104925315197e-05,-1.84990895602472e-05,-2.07460966270146e-05,-2.2060511499929e-05,-2.2993103693782e-05,-2.37164783979849e-05,-2.43075185666964e-05,-2.48072359090349e-05,-2.52401107605494e-05,-2.56219334396108e-05,-1.10346936557421e-05,-1.32817007225095e-05,-1.45961155954239e-05,-1.55287077892769e-05,-1.62520824934798e-05,-1.68431226621913e-05,-1.73428400045298e-05,-1.77757148560443e-05,-1.81575375351057e-05,-3.17136076291969e-06,-5.81730481800446e-06,-7.13171969091884e-06,-8.06431188477186e-06,-8.78768658897474e-06,-9.37872675768623e-06,-9.87844410002475e-06,-1.03113189515393e-05,-1.06931416306006e-05,0,3.17136076291969e-06,5.81730481800447e-06,7.13171969091884e-06,8.06431188477187e-06,8.78768658897473e-06,9.37872675768624e-06,9.87844410002475e-06,1.03113189515393e-05,1.06931416306006e-05,1.10346936557421e-05,1.32817007225095e-05,1.45961155954239e-05,1.55287077892769e-05,1.62520824934798e-05,1.68431226621913e-05,1.73428400045298e-05,1.77757148560443e-05,1.81575375351057e-05,1.84990895602472e-05,2.07460966270146e-05,2.2060511499929e-05,2.2993103693782e-05,2.37164783979849e-05,2.43075185666964e-05,2.48072359090349e-05,2.52401107605494e-05,2.56219334396107e-05,2.59634854647523e-05,2.82104925315197e-05},
yticklabels={${-10^{-5}}$,,${-10^{-6}}$,,,,,,,,,${-10^{-7}}$,,,,,,,,,${-10^{-8}}$,,,,,,,,,${0}$,${10^{-8}}$,,,,,,,,,${10^{-7}}$,,,,,,,,,${10^{-6}}$,,,,,,,,,${10^{-5}}$,},
scaled y ticks=false,
tick label style={font=\scriptsize},
},
colormap={mymap}{[1pt]
  rgb(0pt)=(0.176470588235294,0,0.294117647058824);
  rgb(1pt)=(0.329411764705882,0.152941176470588,0.533333333333333);
  rgb(2pt)=(0.501960784313725,0.450980392156863,0.674509803921569);
  rgb(3pt)=(0.698039215686274,0.670588235294118,0.823529411764706);
  rgb(4pt)=(0.847058823529412,0.854901960784314,0.92156862745098);
  rgb(5pt)=(0.968627450980392,0.968627450980392,0.968627450980392);
  rgb(6pt)=(0.996078431372549,0.87843137254902,0.713725490196078);
  rgb(7pt)=(0.992156862745098,0.72156862745098,0.388235294117647);
  rgb(8pt)=(0.87843137254902,0.509803921568627,0.0784313725490196);
  rgb(9pt)=(0.701960784313725,0.345098039215686,0.0235294117647059);
  rgb(10pt)=(0.498039215686275,0.231372549019608,0.0313725490196078)
},
point meta max=2.94667692178952e-05,
point meta min=-2.94667692178952e-05,
xmin=0,
xmax=81,
xlabel=$\vec z_u$,
ylabel=$\vec z_v$,
ymin=0,
ymax=81,
]
\addplot graphics [includegraphics cmd=\pgfimage,xmin=0, xmax=81, ymin=0, ymax=81] {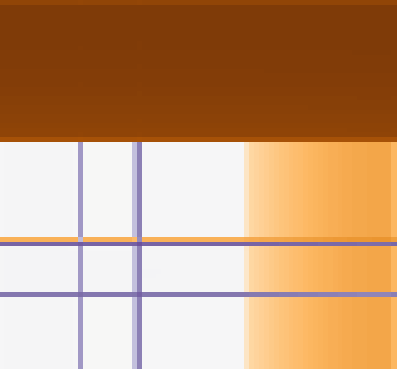};
\end{axis}

\end{tikzpicture}
        \caption{$\Sigma(\vec z_u,\vec z_v)$}\label{fig:ex1:comp}
    \end{subfigure}
    \hfill
    \begin{subfigure}[t]{0.49\textwidth}
        \centering
        \begin{tikzpicture}

\begin{axis}[
height=0.75\textwidth,
point meta max=2.94667692178952e-05,
point meta min=-1.61708710207973e-06,
xlabel=$\vec z_u$,
ylabel=$\vec z_v$,
xmin=0, xmax=81,
ymin=0, ymax=81,
axis lines=box,
axis on top,
major tick length=0pt
]
\addplot graphics [includegraphics cmd=\pgfimage,xmin=0, xmax=81, ymin=0, ymax=81] {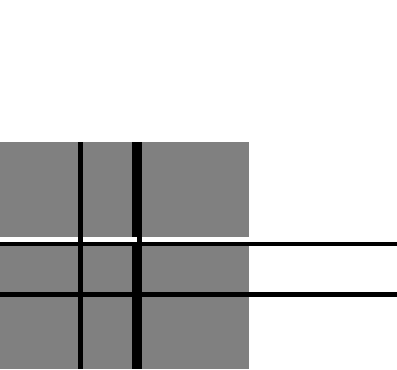};
\end{axis}

\end{tikzpicture}
        \caption{pairs marked numerically positive (white), numerically zero (gray), numerically negative (black)}\label{fig:ex1:test}
    \end{subfigure}
    \caption{Example 2: values of stationarity test and distribution of failed pairs}
    \label{fig:ex1_stat}
\end{figure}
The minimal value of $\Sigma(\vec z_u,\vec z_v)$ was approximately $-1.62\cdot 10^{-6}$, cf. \cref{fig:ex1:comp}. 
Accordingly, the tolerance for the stationarity test was chosen as $\mathrm{tol} = 1.617\cdot 10^{-8}$. 
This leads to $4000$ pairs $(\vec z_u,\vec z_v)$  marked as ``numerically positive'', $2256$ as ``numerically zero'', 
and $305$ as ``numerically negative'' and thus failing the strong stationarity test \eqref{eq:test_discrete}, 
see \cref{fig:ex1:test}. 
These amount to approximately $4.7\%$ of the total number $6561$ of pairs.
Note that pairs where the stationarity test fails correlate with those basis functions 
associated with nodes where the subdomains $I^{+0}(\vec u,\vec v)$ and $I^{0+}(\vec u,\vec v)$ meet. 
Finally, $\Theta=-2.01\cdot 10^{-9}$ holds.

\paragraph{Example 3}

In the last experiment, the desired state is given by $y_{\text d}\equiv 1.5$. 
The optimal controls for the problem
\eqref{eq:OCPC} are nearly constant functions, see \cref{fig:ex2:ocpc}. 
The controls for the problem \eqref{eq:OCCC} computed via \cref{alg:pathfollowing} are complementary, see \cref{fig:ex2:occc}.
The maximal absolute value of the Fischer--Burmeister function is $2.02\cdot 10^{-6}$.
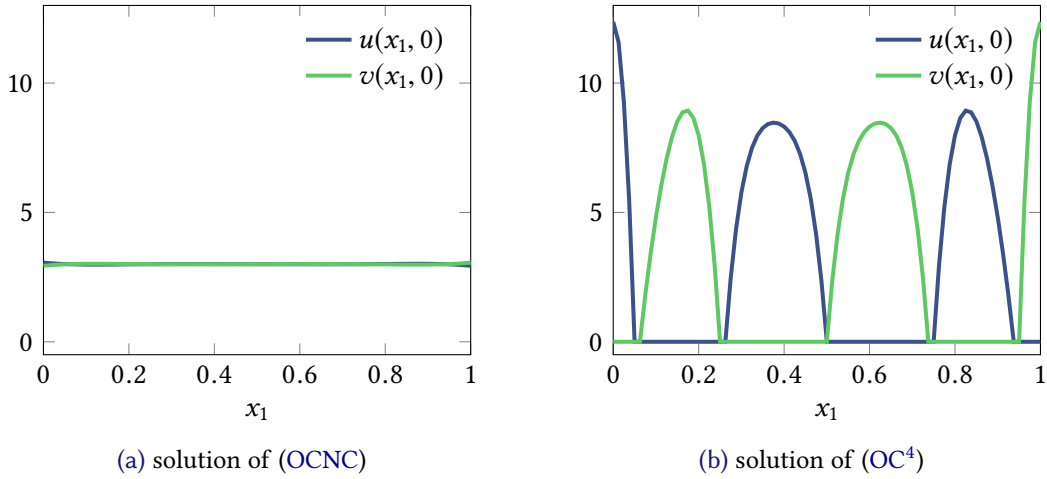
\begin{figure}[t]
    \centering
    \begin{subfigure}[t]{0.49\textwidth}
        \centering
        \begin{tikzpicture}

\begin{axis}[%
width=\textwidth,
xmin=0,
xmax=1,
xlabel=$x_1$,
ymin=-0.5,
ymax=13,
legend style={legend pos=north east,legend style={draw=none}},
]
\addplot [color=mycolor1,solid,line width=1.5pt]
  table[row sep=crcr]{%
0	3.05925279476933\\
0.0125000000000002	3.04171009535151\\
0.0249999999999999	3.02674679437806\\
0.0375000000000001	3.014277484849\\
0.0499999999999998	3.00416919534134\\
0.0625	2.99624898802366\\
0.0750000000000002	2.99031339351996\\
0.0874999999999999	2.98613814648335\\
0.1	2.98348766240189\\
0.1125	2.98212375328258\\
0.125	2.98181327395763\\
0.1375	2.98233444858531\\
0.15	2.98348200633125\\
0.1625	2.98507078997425\\
0.175	2.9869381939036\\
0.2125	2.99294389692892\\
0.225	2.994774371208\\
0.2375	2.99642007370803\\
0.25	2.99784978444251\\
0.2625	2.99904780558509\\
0.275	3.00001133400705\\
0.2875	3.00074787938697\\
0.3	3.00127296972399\\
0.3125	3.00160776149904\\
0.325	3.00177697329918\\
0.3375	3.00180708577962\\
0.35	3.00172483010473\\
0.3625	3.0015559011934\\
0.3875	3.00105020545394\\
0.475	2.99901436145631\\
0.55	2.99747462767869\\
0.5875	2.99657673916283\\
0.625	2.99569165374855\\
0.6375	2.99545811071237\\
0.65	2.99528764903132\\
0.6625	2.995204050717\\
0.675	2.99523314955686\\
0.6875	2.99540176915307\\
0.7	2.99573650730691\\
0.7125	2.99626217089547\\
0.725	2.99700002650847\\
0.7375	2.99796564616644\\
0.75	2.99916663501953\\
0.7625	3.00060024725949\\
0.775	3.00225072723794\\
0.7875	3.00408682136934\\
0.8	3.00605930511755\\
0.825	3.01011332740459\\
0.8375	3.01198812071895\\
0.85	3.01358385894552\\
0.8625	3.01473744109993\\
0.875	3.01526310595507\\
0.8875	3.01495488444132\\
0.9	3.01359024337435\\
0.9125	3.01093519634799\\
0.925	3.00675066658869\\
0.9375	3.00080016823484\\
0.95	2.99285855414871\\
0.9625	2.98272146749666\\
0.975	2.97021517270065\\
0.9875	2.95520592676277\\
1	2.93760774694376\\
};
\addlegendentry{$u(x_1,0)$}

\addplot [color=mycolor2,solid,line width=1.5pt]
  table[row sep=crcr]{%
0	2.93761480358703\\
0.0125000000000002	2.95521215643124\\
0.0249999999999999	2.9702214048975\\
0.0375000000000001	2.98272768564218\\
0.0499999999999998	2.99286477790119\\
0.0625	3.00080642111077\\
0.0750000000000002	3.00675694355298\\
0.0874999999999999	3.01094149805188\\
0.1	3.01359655946869\\
0.1125	3.01496120174114\\
0.125	3.01526943327347\\
0.1375	3.01474375448311\\
0.15	3.01359017150511\\
0.1625	3.01199443189073\\
0.175	3.01011963851145\\
0.2125	3.00409317633869\\
0.225	3.00225706430117\\
0.2375	3.00060656263746\\
0.25	2.99917294290594\\
0.2625	2.99797194811131\\
0.275	2.99700634654625\\
0.2875	2.99626848091825\\
0.3	2.99574277144858\\
0.3125	2.99540801173757\\
0.325	2.99523938857932\\
0.3375	2.99521030850331\\
0.35	2.9952938922772\\
0.3625	2.99546433645371\\
0.3875	2.99597345021322\\
0.4625	2.9977557724151\\
0.575	3.00014318261428\\
0.625	3.00133040322698\\
0.6375	3.0015622580444\\
0.65	3.00173115598441\\
0.6625	3.00181339577718\\
0.675	3.00178328321431\\
0.6875	3.00161407443469\\
0.7	3.00127928695965\\
0.7125	3.00075418689691\\
0.725	3.00001762837335\\
0.7375	2.99905411603692\\
0.75	2.99785608998218\\
0.7625	2.99642637602784\\
0.775	2.99478067169083\\
0.7875	2.9929501970228\\
0.8	2.99098411436689\\
0.825	2.98694444771689\\
0.8375	2.98507702050617\\
0.85	2.98348824489182\\
0.8625	2.98234068693028\\
0.875	2.98181949973065\\
0.8875	2.98213000915595\\
0.9	2.98349394430295\\
0.9125	2.98614445906982\\
0.925	2.99031974596626\\
0.9375	2.99625536293892\\
0.95	3.0041755838302\\
0.9625	3.01428386614541\\
0.975	3.02675315148494\\
0.9875	3.04171644937375\\
1	3.05925832322848\\
};
\addlegendentry{$v(x_1,0)$}

\end{axis}
\end{tikzpicture}%
        \caption{solution of \eqref{eq:OCPC}}\label{fig:ex2:ocpc}
    \end{subfigure}
    \hfill
    \begin{subfigure}[t]{0.49\textwidth}
        \centering
        \begin{tikzpicture}

\begin{axis}[%
width=\textwidth,
xmin=0,
xmax=1,
xlabel=$x_1$,
ymin=-0.5,
ymax=13,
legend style={legend pos=north east,legend style={draw=none}},
]
\addplot [color=mycolor1,solid,line width=1.5pt]
  table[row sep=crcr]{%
0	12.3657539523269\\
0.0124999999999993	11.5813451870105\\
0.0250000000000004	9.26192728774273\\
0.0374999999999996	5.40336381488639\\
0.0500000000000007	1.00029578860017e-05\\
0.262499999999999	4.26302893075103e-06\\
0.275	2.46750105904479\\
0.2875	4.36032516091317\\
0.300000000000001	5.77283772741934\\
0.3125	6.79373330099965\\
0.324999999999999	7.50380838242849\\
0.3375	7.97392690148423\\
0.35	8.26326209757148\\
0.362500000000001	8.41786903988326\\
0.375	8.46961930582469\\
0.387499999999999	8.4355168097466\\
0.4	8.31740604162014\\
0.4125	8.10207375287712\\
0.425000000000001	7.76174342311218\\
0.4375	7.25495316073645\\
0.449999999999999	6.52780366352194\\
0.4625	5.51555452118935\\
0.475	4.14453250220068\\
0.487500000000001	2.33429953914506\\
0.5	2.33023186524406e-07\\
0.75	4.7629752426559e-06\\
0.762499999999999	2.93793171161431\\
0.775	5.19671295754257\\
0.7875	6.85358061685019\\
0.800000000000001	7.98309265125263\\
0.8125	8.65509187727172\\
0.824999999999999	8.93296685266104\\
0.8375	8.87230741128275\\
0.85	8.52000688354622\\
0.862500000000001	7.91383043027835\\
0.875	7.08244187155882\\
0.887499999999999	6.04585908212986\\
0.9	4.81628727452416\\
0.9125	3.39926336290677\\
0.925000000000001	1.79502671586163\\
0.9375	2.54599758005014e-05\\
1	-1.72779770934994e-07\\
};
\addlegendentry{$u(x_1,0)$}

\addplot [color=mycolor2,solid,line width=1.5pt]
  table[row sep=crcr]{%
0	-1.83191513514203e-07\\
0.0625	6.2749999028e-06\\
0.0749999999999993	1.79502771100866\\
0.0875000000000004	3.39926451251249\\
0.0999999999999996	4.81628849840033\\
0.112500000000001	6.04586025221918\\
0.125	7.08244292178752\\
0.137499999999999	7.91383105865923\\
0.15	8.52000703412061\\
0.1625	8.87230702233799\\
0.175000000000001	8.93296585641985\\
0.1875	8.65509020422291\\
0.199999999999999	7.98309033991144\\
0.2125	6.85357763652616\\
0.225	5.19670927648667\\
0.237500000000001	2.93792729054892\\
0.25	4.51593994554855e-06\\
0.5	4.10975108167122e-07\\
0.512499999999999	2.33429678873987\\
0.525	4.1445305711215\\
0.5375	5.51555346370156\\
0.550000000000001	6.52780355160604\\
0.5625	7.25495382269674\\
0.574999999999999	7.76174463359503\\
0.5875	8.10207529601879\\
0.6	8.31740792760717\\
0.612500000000001	8.43551904177067\\
0.625	8.46962159298031\\
0.637499999999999	8.41787110568223\\
0.65	8.26326365627043\\
0.6625	7.97392796834404\\
0.675000000000001	7.50380898940213\\
0.6875	6.79373319394689\\
0.699999999999999	5.77283672991612\\
0.7125	4.36032306455092\\
0.725	2.46749789147512\\
0.737500000000001	4.11861624982635e-06\\
0.949999999999999	1.03068750156865e-06\\
0.9625	5.40335030812917\\
0.975	9.26191358706357\\
0.987500000000001	11.5813311547951\\
1	12.3657395391657\\
};
\addlegendentry{$v(x_1,0)$}

\end{axis}
\end{tikzpicture}%
        \caption{solution of \eqref{eq:OCCC}}\label{fig:ex2:occc}
    \end{subfigure}
    \caption{Example 3: computed controls}
    \label{fig:ex2_sol} 
\end{figure}
\begin{figure}[t]
    \centering
    \begin{subfigure}[t]{0.49\textwidth}
        \centering
        \begin{tikzpicture}

\begin{axis}[
height=0.75\textwidth,
colorbar,
colorbar style={%
ytick={-1.09337170193383e-05,-6.79329339563857e-06,-8.03968510112794e-06,-8.76877751004911e-06,-9.28607680661731e-06,-9.68732531384891e-06,-1.00151692155385e-05,-1.02923572848559e-05,-1.05324685121067e-05,-1.07442616244596e-05,-2.55687594664287e-06,-3.89926147742824e-06,-4.62835388634941e-06,-5.14565318291761e-06,-5.54690169014921e-06,-5.87474559183877e-06,-6.15193366115622e-06,-6.39204488840697e-06,-6.60383800075994e-06,0,2.55687594664286e-06,3.89926147742824e-06,4.62835388634941e-06,5.14565318291761e-06,5.54690169014921e-06,5.87474559183878e-06,6.15193366115622e-06,6.39204488840697e-06,6.60383800075994e-06,6.79329339563858e-06,8.03968510112794e-06,8.76877751004911e-06,9.28607680661731e-06,9.68732531384891e-06,1.00151692155385e-05,1.02923572848559e-05,1.05324685121067e-05,1.07442616244596e-05,1.09337170193383e-05},
yticklabels={${-10^{-5}}$,${-10^{-6}}$,,,,,,,,,${-10^{-7}}$,,,,,,,,,${0}$,${10^{-7}}$,,,,,,,,,${10^{-6}}$,,,,,,,,,${10^{-5}}$},
scaled y ticks=false,
tick label style={font=\scriptsize},
},
colormap={mymap}{[1pt]
  rgb(0pt)=(0.176470588235294,0,0.294117647058824);
  rgb(1pt)=(0.329411764705882,0.152941176470588,0.533333333333333);
  rgb(2pt)=(0.501960784313725,0.450980392156863,0.674509803921569);
  rgb(3pt)=(0.698039215686274,0.670588235294118,0.823529411764706);
  rgb(4pt)=(0.847058823529412,0.854901960784314,0.92156862745098);
  rgb(5pt)=(0.968627450980392,0.968627450980392,0.968627450980392);
  rgb(6pt)=(0.996078431372549,0.87843137254902,0.713725490196078);
  rgb(7pt)=(0.992156862745098,0.72156862745098,0.388235294117647);
  rgb(8pt)=(0.87843137254902,0.509803921568627,0.0784313725490196);
  rgb(9pt)=(0.701960784313725,0.345098039215686,0.0235294117647059);
  rgb(10pt)=(0.498039215686275,0.231372549019608,0.0313725490196078)
},
point meta max=1.11254994381888e-05,
point meta min=-1.11254994381888e-05,
xmin=0,
xmax=81,
xlabel=$\vec z_u$,
ylabel=$\vec z_v$,
ymin=0,
ymax=81,
]
\addplot graphics [includegraphics cmd=\pgfimage,xmin=0, xmax=81, ymin=0, ymax=81] {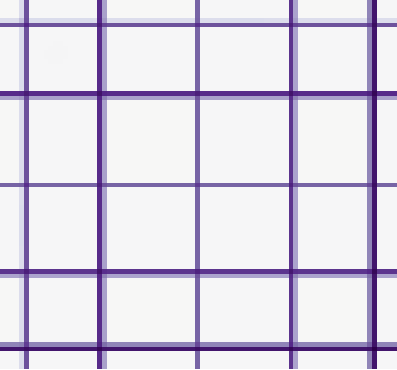};
\end{axis}

\end{tikzpicture}
        \caption{$\Sigma(\vec z_u,\vec z_v)$}\label{fig:ex2:comp}
    \end{subfigure}
    \hfill
    \begin{subfigure}[t]{0.49\textwidth}
        \centering
        \begin{tikzpicture}

\begin{axis}[
height=0.75\textwidth,
colormap/blackwhite,
point meta max=0,
point meta min=-1.11254994381888e-05,
xlabel=$\vec z_u$,
ylabel=$\vec z_v$,
xmin=0, xmax=81,
ymin=0, ymax=81,
axis on top,
major tick length=0pt
]
\addplot graphics [includegraphics cmd=\pgfimage,xmin=0, xmax=81, ymin=0, ymax=81] {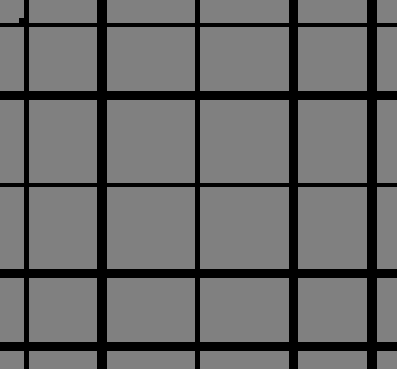};
\end{axis}

\end{tikzpicture}
        \caption{pairs marked numerically positive (white), numerically zero (gray), numerically negative (black)}\label{fig:ex2:test}
    \end{subfigure}
    \caption{Example 3: values of stationarity test and distribution of failed pairs}
    \label{fig:ex2_stat}
\end{figure}
Using the tolerance $\mathrm{tol}=1.11\cdot 10^{-7}$ leads to $0$ numerically positive, 
$5328$ numerically zero, and $1233$ numerically negative pairs, see \cref{fig:ex2:test}. 
These are more than $18.5\%$ of all tested pairs.
In this example, $\Theta=-3.35\cdot 10^{-10}$ holds true.

\paragraph{Summary}

The results of the numerical experiments are summarized in \cref{tab:summary}, where ``complementarity'' 
refers to the maximal absolute value of the elementwise Fischer--Burmeister function.
Noting that Experiment 1 provides a benchmark for a passed stationarity test,
a computed solution of \eqref{eq:OCCC} is considered as \emph{approximately passing} the strong stationarity test 
if $|\Theta|\leq\sqrt{\mathrm{tol}}$ 
holds for $\Theta$ defined in \eqref{eq:test_discrete_first_condition}
and the tolerance defined in \eqref{eq:definition_tolerance},
while the number of numerically negative tested pairs is at most $10\%$ of the total number of tested pairs.
\begin{table}[t]
    \centering
    \begin{tabular}{rccc}
        \toprule
        & Example 1 			& Example 2            	& Example 3     		\\
        \midrule
        $y_{\text{d}}$    & in $L^{2}(\Omega)$ 	& in $H^{1}(\Omega)$   	& constant         		\\
        $\varepsilon$     & $10^{-8}$				& $10^{-8}$            	& $10^{-8}$             \\
        \midrule
        complementarity   & $2.08\cdot 10^{-5}$ 	& $3.58\cdot 10^{-6}$  	& $2.02\cdot10^{-6}$  	\\
        $\text{tol}$      & $2.27\cdot 10^{-10}$	& $1.62\cdot 10^{-8}$  	& $1.11\cdot 10^{-7}$  	\\
        $\Theta$          & $-1.65\cdot 10^{-7}$	& $-2.01\cdot 10^{-9}$ 	& $-3.35\cdot 10^{-10}$	\\
        num.\ neg.\ pairs & $8.3\%$				& $4.7\%$              	& $18.5\%$              \\
        stationarity test & passed 				& passed               	& failed                \\
        \bottomrule
    \end{tabular}
    \caption{summary of experiments}
    \label{tab:summary}
\end{table}

It has to be mentioned that more experiments with the same parameter settings of the above three examples 
were implemented for unstructured grids.  
In \cref{alg:pathfollowing}, the inner iteration implements a damped Newton method to compute the optimal solution 
of the KKT system \eqref{eq:NOC_discrete} with the fixed penalty parameter $\sigma_k$, which increases in every outer loop.  
All experiments show that there is no significant correlation between the number of (inner) Newton iterations 
and the mesh size. 
However, the solutions calculated on unstructured grids differ significantly from those ones obtained on structured grids,
and this phenomenon is not restricted to the use of basis functions from $\mathcal{P}^1(\Omega_\Delta)$.
The reason behind this fact may be the inherent nonconvexity of the optimal control problem \eqref{eq:OCCC},
which causes the existence of several local minimizers (and thus strongly stationary points).
This also explains the observed fact that the output of \cref{alg:pathfollowing} heavily relies on the initial
guess for the controls.

\section{Conclusions}\label{sec:conclusions}

Optimal control problems with complementarity constraints on the controls admit solutions 
if the controls are chosen from a first-order Sobolev space. 
Although necessary optimality conditions of strong stationarity-type can be derived in this case, 
the explicit characterization of the associated Lagrange multipliers is difficult and remains the topic of further research. 
However, a penalty method based on the Fischer--Burmeister function can be formulated 
that ensures convergence to a global minimizers of the original complementarity-constrained problem. 
In theory, this requires computing global minimizers of the penalized problems, 
and it has to be investigated whether an adapted method based on KKT points is theoretically possible. 
Nevertheless, numerical examples illustrate that combined with a computable check for a discrete strong stationarity-type condition, 
this approach leads to a numerical procedure that in many cases results in nearly strongly stationary points. 
In light of prominent literature which deals
with the numerical treatment of finite-dimensional complementarity problems, see \cite{HoheiselKanzowSchwartz2013}
and the references therein, this seems to be the best to be hoped for.

\appendix

\section{A helpful lemma}

In the proof of \cref{prop:strong_convergence_of_surrogate_solutions}, the following lemma is used twice.
\begin{lemma}\label{lem:sum_of_real_sequences}
    Let $\{\alpha_k\}_{k\in\N},\{\beta_k\}_{k\in\N}\subset\R$ be sequences such that 
    $\alpha_k+\beta_k\to\alpha+\beta$ holds where $\alpha,\beta\in\R$ satisfy
    \[
        \alpha\leq\liminf_{k\to\infty}\alpha_k,\qquad\beta\leq\liminf_{k\to\infty}\beta_k.
    \]
    Then, the convergences $\alpha_k\to\alpha$ and $\beta_k\to\beta$ are valid.
\end{lemma}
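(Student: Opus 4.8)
The plan is to derive both convergences from the subadditivity of $\limsup$ (equivalently, the superadditivity of $\liminf$) together with the two lower bounds assumed in the statement. The key elementary fact is that a real sequence whose $\liminf$ is bounded below by $\alpha$ and whose $\limsup$ is bounded above by the same $\alpha$ must converge to $\alpha$.

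First I would rewrite $\alpha_k = (\alpha_k+\beta_k) + (-\beta_k)$ and pass to $\limsup_{k\to\infty}$ on both sides. Using $\limsup_k(x_k+y_k)\le\limsup_k x_k+\limsup_k y_k$ with $x_k:=\alpha_k+\beta_k$ (which converges, hence $\limsup_k x_k=\alpha+\beta$) and $y_k:=-\beta_k$ (for which $\limsup_k(-\beta_k)=-\liminf_k\beta_k\le-\beta$ by hypothesis), one obtains
\[
    \limsup_{k\to\infty}\alpha_k
    \;\le\;(\alpha+\beta)+\bigl(-\liminf_{k\to\infty}\beta_k\bigr)
    \;\le\;(\alpha+\beta)-\beta
    \;=\;\alpha.
\]
Combining this with the assumed inequality $\alpha\le\liminf_{k\to\infty}\alpha_k$ yields $\alpha\le\liminf_k\alpha_k\le\limsup_k\alpha_k\le\alpha$, so $\liminf_k\alpha_k=\limsup_k\alpha_k=\alpha$ and therefore $\alpha_k\to\alpha$. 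The convergence $\beta_k\to\beta$ then follows either by repeating the argument with the roles of $\{\alpha_k\}$ and $\{\beta_k\}$ interchanged, or simply by writing $\beta_k=(\alpha_k+\beta_k)-\alpha_k\to(\alpha+\beta)-\alpha=\beta$.

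The only point requiring a little care is that the subadditivity of $\limsup$ applies only when the right-hand side is not an indeterminate expression of the form $\infty-\infty$; here this is guaranteed since $\alpha+\beta$ is finite and, because $\beta\in\R$ bounds $\liminf_k\beta_k$ from below, the quantity $\limsup_k(-\beta_k)=-\liminf_k\beta_k$ is bounded above by the finite number $-\beta$. (In particular $\liminf_k\beta_k$ cannot equal $+\infty$, as that would force $\alpha_k\to-\infty$, contradicting $\liminf_k\alpha_k\ge\alpha$.) Accordingly there is no genuine obstacle; the proof is a short exercise in extended-real limit arithmetic.
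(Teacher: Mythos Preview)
Your proof is correct and follows essentially the same route as the paper: rewrite $\alpha_k=(\alpha_k+\beta_k)+(-\beta_k)$, apply the subadditivity of $\limsup$ together with $\limsup(-\beta_k)=-\liminf\beta_k\le-\beta$ to obtain $\limsup\alpha_k\le\alpha$, and then sandwich with the assumed $\liminf$ bound. The paper likewise deduces $\beta_k\to\beta$ from $\alpha_k+\beta_k\to\alpha+\beta$ once $\alpha_k\to\alpha$ is established.
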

\begin{proof}
    The assumptions imply that
    \begin{align*}
        \alpha
        \leq\liminf_{k\to\infty}\alpha_k
        &\leq\limsup_{k\to\infty}\alpha_k
        =\limsup_{k\to\infty}(\alpha_k+\beta_k-\beta_k)\\
        &=\lim_{k\to\infty}(\alpha_k+\beta_k)+\limsup_{k\to\infty}(-\beta_k)
        \leq \alpha+\beta -\beta =\alpha,
    \end{align*}
    which implies that $\alpha_k\to\alpha$. Now, $\beta_k\to\beta$ follows from $\alpha_k+\beta_k\to\alpha+\beta$.
\end{proof}

\section*{Acknowledgments}

The authors sincerely thank Frank Heyde for fruitful discussions about the explicit form of the 
generalized second-order derivative of the discretized squared Fischer--Burmeister function.
Furthermore, the authors appreciate the comments of two anonymous reviewers which helped to
improve the presentation of the obtained results.
This work is partially supported by the DFG grants
\emph{Parameter Identification in Models With Sharp Phase Transitions}
and
\emph{Analysis and Solution Methods for Bilevel Optimal Control Problems}
under the respective grant numbers CL\,487/2-1 and DE\,650/10-1 
within the Priority Program SPP 1962
(Non-smooth and Complementarity-based Distributed Parameter Systems: Simulation and Hierarchical Optimization).

\printbibliography

\end{document}